\theoremstyle{plain}
\newtheorem{thm}{\bf Theorem}[section]
\newtheorem{proposition}[thm]{\bf Proposition}
\newtheorem{lemma}[thm]{\bf Lemma}
\newtheorem{theorem}[thm]{\bf Theorem}
\newtheorem{cor}[thm]{\bf Corollary}
\newtheorem{definition}[thm]{\bf Definition}
\newtheorem{rem}{\bf Remark}
\def\be{\begin{equation}}
\def\ee{\end{equation}}
\newcommand{\cala}{{\cal A}}
\newcommand{\calb}{{\cal B}}
\newcommand{\calh}{{\cal H}}
\newcommand{\cali}{{\cal I}}
\newcommand{\cals}{{\cal S}}
\newcommand{\Der}{{Der_K(\cala)}}
\newcommand{\Derl}{{Der_\cala(\log \cali)}}
\newcommand{\Oml}{{\Omega_\cala(\log\cali)}}
\newcommand{\Om}{{\Omega_\cala}}
\title{Logarithmic Poisson cohomology:\\ 
example of calculation and application to prequantization}
\author{J. DONGHO\\
Universit\'e d'Angers, D\'epartement de Math\'ematiques, \\
UFR Sciences, LAREMA, UMR 6093 du CNRS,\\
2 bd.~Lavoisier, 49045 Angers Cedex 01, France; \\
 University of Yaound\'e I, Faculty of Sciences, \\Department of Mathematics,\\ 
Po Box 812 Yaound\'e (Cameroon), GTGAC $^\star$ \\
E-mail:{\tt joseph.dongho@etud.univ-angers.fr}}
\begin{document}

\maketitle
\begin{abstract}
In this paper, we introduce the notions of logarithmic Poisson
structure and logarithmic principal Poisson structure; we prove
that the latter induces a representation by logarithmic derivation
of the module of logarithmic Kahler differentials; therefore, it
induces a differential complex from which we derive the notion of
logarithmic Poisson cohomology. We prove that Poisson cohomology and
logarithmic Poisson cohomology are equal when the Poisson structure
is logsymplectic. We also give and example of non logsymplectic but 
logarithmic Poisson structure for which these cohomologies are equal. 
We give and example for which these cohomologies are different. 
We discuss and modify the K. Saito definition of logarithmic forms.
The notes end with an application to a prequantization
of the logarithmic Poisson algebra: $(\mathbb{C}[x,y],
\{x,y\}=x).$
\end{abstract}
\par

\vfill
\parbox{.75\textwidth}{\hrulefill}\par
\begin{minipage}[c]{\textwidth} \small 
 \noindent  $^\star$ "Groupe de Topologie et G\'eométrie d'Afrique Centrale"\\
{\em 2010 Mathematics Subject Classification:} 13D03, 16E45, 53C15, 53D17, 55N25, 57T10 \par\noindent

\end{minipage}
\section*{Introduction}
 
 The classical Poisson brackets 
\begin{equation}\label{E00}
 \{f,g\}=\underset{i=1}{\overset{n}\sum}(\dfrac{\partial f}{\partial p_i}\dfrac{\partial g}{\partial q^i}-\dfrac{\partial f}{\partial q^i}\dfrac{\partial g}{\partial p_i})
\end{equation}
 defined on the algebra of smooth functions on $\mathbb{R}^{2n},$ play a fundamental role in the analytical mechanics. They were discovered by D. Poisson in 1809. It was only a century later when A. Lichnerowicz (in \cite{AL})  and A. Weinstein (in \cite{AW}) extend it in a large theory known now as the Poisson Geometry. It has been remarked by A. Weinstein (\cite{AW}) that in fact, the theory can be traced back to S. Lie (in \cite{SL}). The Poisson bracket (\ref{E00}) is derived from a symplectic structure on $\mathbb{R}^{2n}$ and it appears as one of the main ingredients of symplectic geometry.\\

The basic properties of the bracket (\ref{E00}) are that it yields the structure of a Lie algebra on the space of functions and it has a natural compatibility with the usual associative product of functions.\\
These facts are of algebraic nature, and it is natural to define an abstract notion of a \textit{Poisson algebra}. \\
Following A. Vinogradov and I. Krasil'shchilk in \cite{VK}, J. Braconnier (in \cite{JB}) has developed the algebraic version of Poisson geometry.\\
One of the most important notion related to the Poisson geometry is the Poisson cohomology which was introduced by A. Lichnerowicz (in \cite{AL}) and in algebraic setting by I. Krasil'shchilk (in \cite{IK}). 
 Unlike the De Rham cohomology, the Poisson cohomology are almost irrelevant to a topology of the manifold.  Moreover, they have bad functorial properties and they are very large, and their actual computation is both more complicated and less significant than it is in  the case of the De Rham cohomology. However, they are very interesting because they allow us to describe various important results concerning the Poisson structures. In particular, they provide an appropriate setting for the \textit{geometric quantization} of the manifold. The algebraic aspect of this theory were developed by J. Huebschmann (in \cite{JH}) and for the geometrical setting see I. Vaisman (in \cite{IV})\\

This paper deals with Poisson algebras, but Poisson algebras of another kind. More specifically, we study the  \textit{logarithmic Poisson structures}. If the Poisson structures draw their origins from symplectic structures, logarithmic Poisson structure are inspired by log symplectic structures which are in its turn based on the theory of logarithmic differential forms. The latter were introduced by P. Deligne (in \cite{PD}) who defined it only in the case of normal crossing divisor of a given complex manifold. But it was only the theory of logarithmic differential forms along a singular divisor not necessarily normal crossings was in 1980s wen appeared in the K. Saito work's (see \cite{KS}). Explicitly, if $\cali$ is an ideal in a commutative algebra $\cala$ over a commutative ring $R,$ a derivation $D$ of $\cala$ is called logarithmic along $\cali$ if $D(\cali)\subset \cali.$ We denote by $Der_\cala(\log\cali)$ the $\cala$-module of derivations of $\cala,$ logarithmic along $\cali.$  A Poisson structure $\{-,-\}$ on $\cala$ is called logarithmic along $\cali$ if for all $a\in\cala,$ we have $\{a,-\}\in Der_\cala(\log\cali).$  
 In addition, suppose that $\cali$ is generated by $\{u_1,...,u_p\}\subset \cala$ and let $\Omega_\cala$ be the $\cala$-module of K\"alher differential. The $\cala$-module $\Omega_\cala(\log\cali)$ generated by $\{\dfrac{du_1}{u_1},...,\dfrac{du_p}{u_p}\}\cup\Omega_\cala$ is called the module of K\"alher differentials logarithmic along $\cali$.\\
 
With the above definition  we point out that the K. Saito definition of logarithmic forms is incomplete if we do not add the hypothesis that the defining function of the divisor is square free. In fact, according to K. Saito (Definition 1.2 in \cite{KS}), $\dfrac{dx}{x^2}$ and $\dfrac{dy}{x}$ are logarithmic along $D=\{(x,y)\in\mathbb{C}^2, x^2=h(x,y)=0\}.$ If that is the case, the the system $(\dfrac{dx}{x^2}, \dfrac{dy}{x})$ will a basis of $\Omega$; this is a contradiction with Theorem 1.8 in \cite{KS}; since $\dfrac{dx\wedge dy}{x^3}\neq \dfrac{unit}{x^2}dx\wedge dy.$ \\ 

 In the case where $\cali$ is generated by $\{u_1,...,u_p\}\subset \cala,$ we say that a Poisson structure $\{-,-\}$ on $\cala$ logarithmic principal along $\cali$ if for all $a\in\cala, u_i\in\{u_1,...,u_p\},$\quad $\dfrac{1}{u_i}\{a,u_i\}\in\cala.$ \\

  The J. Huebschmann program of algebraic construction of the Poisson cohomology can be  summarized as follows: \\ Let $ \cala $ be a commutative algebra over a commutative ring $R.$  A Lie-Rinehart algebra on $\cala$ is an $\cala $-module which is an $R$-Lie algebra acting on $\cala $ with suitable compatibly conditions. J. Huebschmann observes that each Poisson structure $\{-,-\}$ gives rise to a structure of Lie-Rinehart algebra in the sens of G. Rinehart (in \cite{GR}) on the $\cala$-module $\Omega_\cala$ in natural fashion. But it was proved in \cite{RP} that; anny Lie-Rinehart algebra $L$ on $\cala$ gives rise to a complex $Alt_\cala(L,\cala)$ of alternating forms which generalizes the usual De Rham complex of manifold and the usual complex computing Chevalley-Eilenberg (in \cite{CE}) Lie algebra cohomology. Moreover, extending earlier work of Hochshild, Kostant and Rosenberg (in \cite{HR}), G. Rinehart has shown that, when $L$ is projective as an $\cala$-module, the homology of the complex $Alt_\cala(L,\cala)$ may be identified with $Ext^*_{U(\cala, L)}(\cala,\cala)$ over a suitably defined universal algebra $U(\cala,L)$ of differential operators. But the latter is the Lie algebra cohomology $H^*(L, \cala)$ of $L.$ So, since $\Omega_\cala$ is free $\cala$-module, it is projective. Therefore, The homology of the complex $Alt_\cala(\Omega_\cala,\cala)$ computing the cohomology of the underline Lie algebra of the Poisson algebra $(\cala,\{-, -\}).$ Then, Poisson cohomology of $(\cala,\{-, -\})$ is the homology of $Alt_\cala(\Omega_\cala,\cala).$\\

It follows from the definition of Poisson structure that the image of Hamiltonian map of logarithmic principal Poisson structure is submodule of $Der_\cala(\log\cali)$.
Inspired by this fact, we introduce the notion of logarithmic Lie-Rinehart structure. So, a Lie-Rinehart algebra $L$ on $\cala$ is saying logarithmic along an ideal $\cali$ of $\cala$ if it acts by logarithmic derivations on $\cala$. \\

In the case of logarithmic principal Poisson structure, we replace in the J. Huebschmann program's $\Omega_\cala$ by $\Omega_\cala(\log\cali)$ and we prove the following result:
\begin{enumerate}
 \item[$\bullet$] For all logarithmic principal Poisson structure,  $\Omega_\cala(\log\cali)$ is a logarithmic Lie-Rinehart algebra.
\end{enumerate}
From this result, we define logarithmic Poisson cohomology as homology of the complexe $Alt_\cala(\Omega_\cala(\log\cali),\cala).$\\
We also prove that
\begin{enumerate}
 \item [$\bullet$] Poisson cohomology and logarithmic Poisson cohomology are equal when the Poisson structure is log symplectic.
\end{enumerate}
We check this result on the example, $(\cala=\mathbb{C}[x,y], \{x,y\}=x).$
We also show that  the logarithmic principal Poisson algebra. $(\cala=\mathbb{C}[x,y], \{x,y\}=x^2)$ is not log symplectic 
but its Poisson cohomology is equal to its logarithmic Poisson cohomology.\\  They are different in general and we show that for  $(\cala=\mathbb{C}[x,y,z], \{x,y\}=0, \{x,z\}=0, \{y,z\}=xyz),$ one can prove that:
\begin{enumerate}
 \item [$\bullet$] Its $3^{rd}$ Poisson cohomology is
\begin{equation*}
\begin{array}{ccc}
 H^3_P\cong \mathbb{C}[y]\oplus z\mathbb{C}[z]\oplus x\mathbb{C}[x]\oplus xy\mathbb{C}[y]\oplus xy\mathbb{C}[x]\oplus\\
xz\mathbb{C}[x]\oplus xz\mathbb{C}[z]\oplus yz\mathbb{C}[y]\oplus yz\mathbb{C}[z]
\end{array}
\end{equation*}
\item[$\bullet$] Its $3^{rd}$ logarithmic Poisson cohomology is 
\begin{equation*}\label{E7}
 H^3_{PS}\cong  \mathbb{C}[y]\oplus z\mathbb{C}[z]\oplus x\mathbb{C}[x]
\end{equation*}
\end{enumerate}
The structure of paper is as following. It consists of 4 sections:
\begin{enumerate}
 \item [Section 1] In this section, we introduce the notions of principal Poisson structures and logarithmic Poisson cohomology. For this, we use the notions of Lie-Rinehart algebra and logarithmic-Lie-Rinehart algebra. The main results of this section
 are Theorem 1.10 and Corollary 1.13 of Proposition 1.12.
\item[Section 2] We recall the notion of log symplectic manofold and we prove that Poisson structure induced by log symplectic structure is logarithmic principal Poisson structure.
\item[Section 3] In this section, we compute Poisson cohomology and logarithmic Poisson cohomology of 3 logarithmics principal Poisson structure. Thanks to the Theorem 3.14, we show that in general, these two cohomologies are different.
\item[Section 4] We apply logarithmic Poisson cohomology to a prequantization of the logarithmic principal Poisson structure $\{x,y\}=x.$
\end{enumerate}

\section{Logarithmic Poisson cohomology.}
\subsection{Notations and conventions.}
Throughout this paper, $R$ denote a commutative ring, $\cala$ is a
commutative, unitary $R$-algebra, $Der_\cala$ is the $\cala$-module
of derivations of $\cala$ and $\Omega_\cala$ is the $\cala$-module
of Kalher differentials.
 An action of a Lie $R$-algebra $L$ on $\cala$ is a morphism of Lie algebras $\rho:L\rightarrow
 Der_\cala.$ For all
$R$-module $M,$ an action of a Lie $R$-algebra $L$ on $M$ is a
morphism of Lie algebras $r:L\rightarrow End_R(M).$
\subsection{ Poisson cohomology.}
 Let $L$ be a Lie algebra over $R.$
 A structure of Lie-Rinehart\footnote{see \cite{GR} or \cite{JH}} algebra on $L$ is an action $\rho:L\rightarrow Der_\cala$ of
 $L$ on $\cala$ satisfying the following
compatibility properties:
\begin{enumerate}
 \item $[\rho(al)](b)=a(\rho(l)(b))$
\item  $[l_1,al_2]=\rho(l_1)(a)l_2+a[l_1,l_2]$
\end{enumerate}
A Lie-Rinehart algebra is a pair $(L,\rho)$ where $\rho$ is a
structure of Lie-Rinehart algebra on $L.$ In the sequel, all
Lie-Rinehart  algebra $(L,\rho)$ is denoted simply by $L$ if no
confusion is possible. Let $Alt^p_\cala(L,\cala)$ be the $R$-module
of alternating p-forms on a Lie-Rinehart algebra $L.$
 The following map
\begin{equation*}
 \begin{array}{llll}
  d_\rho(f)(l_1,...,l_p)&=&\underset{i=1}{\overset{p}\sum}(-1)^{i+1}\rho(\alpha_i)f(l_1,...,\hat{l_i},....,l_p)\\
&+& \underset{i<j}{\sum}(-1)^{i+j}f([l_i,l_j],
l_1,...,\hat{l_i},...,\hat{l_j},...,l_p)
 \end{array}
\end{equation*}
induces a structure of a chain complex on
$Alt_\cala(L,\cala):=\underset{p\geq
0}{\bigoplus}Alt^p_\cala(L,\cala)$ and the associated cohomology
is called Lie-Rinehart cohomology of $L.$\\
It is known that for each Poisson algebra $(\cala, \{-,-\})$, the
following data:
 \begin{enumerate}
  \item Lie-Poisson bracket $[da, db]:=d\{a,b\}$ on $\Omega_\cala.$
\item Hamiltonian map $H:\Omega_\cala\rightarrow Der_\cala$, defined by $H(da)b:=\{a,b\}.$
 \end{enumerate}
induces a Lie-Rinehart structure on the $\cala$-module
$\Omega_\cala.$ The associated Lie-Rinehart cohomology is called
Poisson cohomology of $(\cala, \{-,-\})$ and the corresponding
cohomology space is denoted by $H^*_P.$

\subsection{Logarithmic Poisson cohomology.}
Let $\cali$ be a non trivial ideal of $\cala$ and $L$ a Lie algebra
over $R$ who is also an $\cala$-module. For all $\delta\in
Der_\cala,$ we say that:
\begin{enumerate}
 \item $\delta$ is logarithmic along $\cali$ if $\delta(\cali)\subset\cali.$
\item $\delta$ is  logarithmic principal along $\{u_1, ..., u_p\}\subset
\cali$ if for all $i=1, ..., p$\\ $\delta(u_i)\in u_i\cala.$
\end{enumerate}
We denoted by $Der_\cala(\log \cali)$ the $\cala$-module of
derivations of $\cala$ logarithmic along $\cali$ and
$\widehat{Der_\cala(\log \cali)}$ the module of logarithmic
principal derivations on $\cala.$ It is clea that $Der_\cala(\log
\cali)$ is a submodule of $Der_\cala.$ Among the structures of
Lie-Rinehart algebra $\rho:L\rightarrow Der_\cala$ on $L$, there are
those whose image lives in $Der_\cala(\log \cali)$.
\begin{definition}
 A Lie-Rinehart structure $\rho:L\rightarrow Der_\cala$ on $L$ is saying logarithmic along $\cali$ if
 $\rho(L)\subset Der_\cala(\log \cali).$
\end{definition}
Let $L$ be a logarithmic Lie-Rinehart algebra.
\begin{definition}
A logarithmic Lie-Rinehart cohomology of $L$ is the Lie-Rinehart
cohomology associated to the representation of $L$ by logarithmic
derivations along $\cali$.
\end{definition}

 By the definition,
$Der_\cala(\log \cali)$ is an logarithmic Lie-Rinehart algebra. Let
$(L,\rho)$ be a logarithmic Lie-Rinehart algebra
 we denoted by $(Alt(L, \cala),d_\rho)$ the complex induced by its action $\rho$ on $\cala.$\\
As in the case of Lie-Rinehart algebra, the notion of
logarithmic-Lie-Rinehart-Poisson and
logarithmic-Lie-Rinehart-symplectic structures are well defined.

Let $(L, \rho)$ be a logarithmic Lie-Rinehart algebra.
\begin{definition}  A logarithmic-Lie-Rinehart-Poisson structure on
$(L, \rho)$ is a skew-symmetric 2-form $\mu:L\times
L\rightarrow\cala$ such that $d_{\rho}\mu=0.$
\end{definition}
A logarithmic-Lie-Rinehart-Poisson algebra is a triple $(L, \rho,
\mu)$ where $\mu$ is a logarithmic-Lie-Rinehart-Poisson structure on
$(L, \rho).$
\begin{definition}
A logarithmic Lie-Rinehart-Poisson algebra  $(L, \rho, \mu)$  is
called  logarithmic Lie-Rinehart-symplectic if the 2-form $ \mu$ is
non degenerate. In other words, the map $$I:
L\rightarrow\mathcal{H}om(L, \cala), \quad  l\mapsto I(l)=i_l\mu$$
is an isomorphism of $\cala$-modules. Where for all $l\in L,$ the
map
\begin{equation*}
 i_l:Alt(L,\cala)\rightarrow Alt(L,\cala)
\end{equation*}
is defined by
\begin{equation*}
 (i_l(f))(l_1,...,l_{p-1})=f(l,l_1,...,l_{p-1})
\end{equation*}
\end{definition}
Let $\cals:=\{u_1, ..., u_p\}\subset \cala$ such that $u_i\cala$ are prime ideal and $u_i\notin u_j\cala$ for all $i\neq j; i, j=1,...,p.$
We denoted by $\Omega_\cala(\log \cali)$ the $\cala$-module generated by
$\{\dfrac{du_i}{u_i}; i=1,...,p\}\cup\Omega_\cala.$
\begin{definition}
 $\Omega_\cala(\log \cali)$ is called $\cala$-module of Kalher's logarithmic  differentials on $\cala.$
\end{definition}
The following Proposition give the dual of the $\cala$-module
$\Omega_\cala(\log \cali).$
\begin{proposition}The $\cala$-module of $\cala$-linear maps from
$\Omega_\cala(\log \cali)$ to $\cala$ is isomorphic to the
$\cala$-module
 $\widehat{Der_\cala(\log \cali)}$ of logarithmic principal
 derivations.
\end{proposition}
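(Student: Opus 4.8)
\noindent The plan is to deduce the statement from the universal property of K\"ahler differentials, transported into a localization in which the symbols $\tfrac{du_i}{u_i}$ genuinely live. Let $S$ be the multiplicative subset of $\cala$ generated by $u_1,\dots,u_p$. Since each $u_i\cala$ is a nonzero prime ideal (and in all of the paper's examples $\cala$ is an integral domain), each $u_i$ is a non-zero-divisor, so $\cala\hookrightarrow S^{-1}\cala$; and because localization commutes with K\"ahler differentials, $S^{-1}\Om\cong\Omega_{S^{-1}\cala}$. Inside this localized module both $\Om$ and each element $\tfrac{du_i}{u_i}$ make sense, and $\Oml$ is by definition the $\cala$-submodule they generate, so $\Om\subseteq\Oml\subseteq S^{-1}\Om$ and the relation $u_i\cdot\tfrac{du_i}{u_i}=du_i$ holds in $\Oml$.

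Next I would build the map
$$\Phi:\mathrm{Hom}_\cala(\Oml,\cala)\longrightarrow Der_\cala,\qquad \Phi(\psi)(a)=\psi(da).$$
That $\Phi(\psi)$ is a derivation is immediate since $d$ is one and $\psi$ is $\cala$-linear; this is just the classical identification $\mathrm{Hom}_\cala(\Om,\cala)\cong Der_\cala$ applied to the restriction $\psi|_{\Om}$. The one point to check is that its image lands in $\widehat{Der_\cala(\log\cali)}$: using the relation above,
$$\Phi(\psi)(u_i)=\psi(du_i)=\psi\!\left(u_i\,\tfrac{du_i}{u_i}\right)=u_i\,\psi\!\left(\tfrac{du_i}{u_i}\right)\in u_i\cala,$$
so $\Phi(\psi)$ is logarithmic principal for every $i$.

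For the inverse I would start from $\delta\in\widehat{Der_\cala(\log\cali)}$, say $\delta(u_i)=u_i c_i$ with $c_i\in\cala$, and extend $\delta$ to the derivation $\tilde\delta$ of $S^{-1}\cala$ determined by the quotient rule. This yields a map $\phi_{\tilde\delta}:S^{-1}\Om\to S^{-1}\cala$ with $\phi_{\tilde\delta}(d\alpha)=\tilde\delta(\alpha)$. I then restrict $\phi_{\tilde\delta}$ to $\Oml$ and claim the restriction takes values in $\cala$: it suffices to test on generators, and while $\phi_{\tilde\delta}(da)=\delta(a)\in\cala$ is automatic, on the logarithmic generators one finds
$$\phi_{\tilde\delta}\!\left(\tfrac{du_i}{u_i}\right)=\tfrac{1}{u_i}\,\tilde\delta(u_i)=\tfrac{1}{u_i}\,\delta(u_i)=c_i\in\cala,$$
and it is exactly the logarithmic principal hypothesis $\delta(u_i)\in u_i\cala$ that makes the denominator cancel. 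Hence $\psi_\delta:=\phi_{\tilde\delta}|_{\Oml}$ is a well-defined element of $\mathrm{Hom}_\cala(\Oml,\cala)$, and by construction $\Phi(\psi_\delta)=\delta$.

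It then remains only to verify that $\delta\mapsto\psi_\delta$ and $\Phi$ are mutually inverse and $\cala$-linear, which is routine once well-definedness is in hand. I expect the sole genuine obstacle to be this well-definedness: a priori an assignment prescribed on the chosen generators of $\Oml$ need not respect the relations among them, and the clean way around it is precisely to realize $\psi_\delta$ as the restriction of the honest map $\phi_{\tilde\delta}$ coming from the localization, rather than to define it generator by generator. The standing hypotheses that each $u_i\cala$ be prime (forcing the $u_i$ to be non-zero-divisors, so that $S^{-1}\cala$ and the quotient rule are available) and that $u_i\notin u_j\cala$ for $i\neq j$ (so the logarithmic generators are genuinely independent and no spurious relation is imposed) are exactly what guarantees this localization picture, and hence the isomorphism.
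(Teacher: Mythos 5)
Your proposal is correct, and at its core it produces the same isomorphism as the paper: both arguments rest on the universal isomorphism $\sigma:Der_\cala\to\mathcal{H}om(\Om,\cala)$ together with the observation that the logarithmic principal condition $\delta(u_i)\in u_i\cala$ is exactly what makes $\tfrac{1}{u_i}\,\sigma(\delta)(du_i)$ land in $\cala$. The difference is in execution, and it is a substantive one. The paper defines the map in the direction $\widehat{Der_\cala(\log \cali)}\to\mathcal{H}om(\Oml,\cala)$ by prescribing values on expressions $a\tfrac{du_i}{u_i}+b\,dc$ in the chosen generators of $\Oml$ and asserts that ``a straightforward computation'' finishes the proof; since those generators are not free (already $u_i\cdot\tfrac{du_i}{u_i}=du_i$ is a relation among them), this prescription carries a genuine well-definedness burden that the paper never discharges. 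Your route removes that burden: you define $\Phi(\psi)=\psi\circ d$ in the direction where well-definedness is automatic, and you construct its inverse not generator by generator but as the restriction to $\Oml$ of an honest, globally defined map $\phi_{\tilde\delta}$ on $S^{-1}\Om\cong\Omega_{S^{-1}\cala}$, obtained by extending $\delta$ to $S^{-1}\cala$ via the quotient rule and dualizing there. This also makes explicit what the paper leaves implicit, namely that $\tfrac{du_i}{u_i}$ only has meaning inside a localization, which requires the $u_i$ to be non-zero-divisors. Two small caveats: as you yourself half-acknowledge, primality of $u_i\cala$ alone does not force $u_i$ to be a non-zero-divisor in a general commutative ring (take $\cala=k\times k$, $u=(1,0)$), so in the paper's abstract setting this should be stated as an added hypothesis rather than deduced; and your closing suggestion that the hypothesis $u_i\notin u_j\cala$ for $i\neq j$ underpins the localization picture is not actually used anywhere in your argument --- only the non-zero-divisor property is. Neither caveat affects the validity of your proof, and on the main point --- well-definedness --- your write-up is more complete than the paper's.
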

\begin{proof}
From the universal property of $(\Omega, d);$ there is an
isomorphism $\sigma$ from $Der_\cala$ \quad to \quad
$\mathcal{H}om(\Omega_\cala,\cala).$ Consider
\begin{equation*}
 \hat{\sigma}:\widehat{Der_\cala(\log \cali)}\rightarrow\mathcal{H}om(\Omega_\cala(\log\cali),\cala)
\end{equation*}
defined by
$\hat{\sigma}(\delta)(a\dfrac{du_i}{u_i}+bdc)=a\dfrac{1}{u}\sigma(\delta)(du)+b\sigma(\delta)(dc).$
We see from a straightforward computation that $\hat{\sigma}$ is an
isomorphism.
\end{proof}
 Let $(\cala,\{-,-\})$ be a Poisson algebra and $\cals$ as above.
\begin{definition}
We say that $(\{-,-\})$ is:
\begin{enumerate}
 \item a logarithmic Poisson structure along $\cali$ if for all $a\in\cala,$  $\{a,-\}\in Der_\cala(\log \cali).$
\item  a logarithmic principal Poisson structure along $\cals$ if for all $a\in\cala,$  $\{a,-\}\in \widehat{Der_\cala(\log \cali)}.$
\end{enumerate}
\end{definition}
When $\cala$ is endowed with  a logarithmic Poisson structure along
$\cali$ (respectively a logarithmic principal Poisson structure
along $\cals$), we say that $(\cala,\{-,-\})$ is a logarithmic
(respectively a logarithmic principal )Poisson algebra.
\begin{proposition}
 Let  $(\cala,\{-,-\})$ be a Poisson algebra
\begin{enumerate}
 \item If $(\{-,-\})$ is logarithmic along $\cali,$ then $H(\Omega_\cala)\subset Der_\cala(\log D).$
\item If $(\{-,-\})$ is logarithmic principal along $\cals,$ then $H(\Omega_\cala)\subset \widehat{Der_\cala(\log D})$ and $H$ extended to
$\Omega_\cala(\log \cali)$ by $$\tilde{H}:\Omega_\cala(\log \cali)\rightarrow \widehat{Der_\cala(\log D)}; \quad \dfrac{du}{u}\mapsto \dfrac{1}{u}H(du)$$ for all $u\in\cals$
\end{enumerate}
\end{proposition}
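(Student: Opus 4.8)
The plan is to use that $H$ is $\cala$-linear and that the exact differentials $\{da : a\in\cala\}$ generate $\Omega_\cala$ as an $\cala$-module, so that both inclusions reduce to a statement about generators once one observes that $Der_\cala(\log\cali)$ and $\widehat{Der_\cala(\log\cali)}$ are $\cala$-submodules of $Der_\cala$. Indeed, if $\delta$ is logarithmic along $\cali$ then so is $b\delta$, since $b\,\delta(\cali)\subset b\cali\subset\cali$; if $\delta$ is logarithmic principal then so is $b\delta$, since $b\,\delta(u_i)\in u_i\cala$; and both classes are evidently closed under sums.

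For (1), since $H(da)=\{a,-\}$ and the hypothesis is exactly that $\{a,-\}\in Der_\cala(\log\cali)$ for all $a$, writing a general element of $\Omega_\cala$ as $\sum_j b_j\,da_j$ gives $H(\sum_j b_j\,da_j)=\sum_j b_j\{a_j,-\}\in Der_\cala(\log\cali)$ by the submodule property. The first claim of (2), namely $H(\Omega_\cala)\subset\widehat{Der_\cala(\log\cali)}$, is the verbatim same argument with $\widehat{Der_\cala(\log\cali)}$ in place of $Der_\cala(\log\cali)$. I would record here the reformulation needed for the extension: the hypothesis $\{a,u_i\}\in u_i\cala$ for all $a$, combined with skew-symmetry of the bracket, yields $\{u_i,b\}\in u_i\cala$ for every $b\in\cala$ and every $i$.

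To construct $\tilde H$, I would view $\Omega_\cala(\log\cali)$ as the $\cala$-submodule of $\Omega_\cala\otimes_\cala K$ generated by $\Omega_\cala$ and the elements $u_i^{-1}du_i$ (here $K$ is the fraction field, each $u_i$ being a non-zero-divisor in the setting considered), and define $\tilde H$ as the restriction to this submodule of the $K$-linear extension of $H$. This makes $\tilde H$ well defined automatically, and it manifestly extends $H$; the content is that its image lies in $\widehat{Der_\cala(\log\cali)}\subset Der_\cala$, in accordance with the duality established in the preceding Proposition. On a generator, $\tilde H(\frac{du_i}{u_i})=\frac1{u_i}\{u_i,-\}$, and this is a genuine $\cala$-valued derivation precisely because $\{u_i,b\}\in u_i\cala$ for all $b$, so that dividing by $u_i$ never leaves $\cala$.

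The main obstacle is to check that $\frac1{u_i}\{u_i,-\}$ is again logarithmic principal, i.e. that $\frac1{u_i}\{u_i,u_j\}\in u_j\cala$ for all $j$; this is the only place the standing hypotheses on $\cals$ are used. Skew-symmetry together with the logarithmic-principal condition gives $\{u_i,u_j\}\in u_i\cala\cap u_j\cala$, so I may write $\{u_i,u_j\}=u_i\alpha$ with $\alpha=\frac1{u_i}\{u_i,u_j\}\in\cala$. Then $u_i\alpha\in u_j\cala$, and since $u_j\cala$ is prime while $u_i\notin u_j\cala$ by assumption, primality forces $\alpha\in u_j\cala$, which is exactly the desired conclusion. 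Finally, for an arbitrary element $\eta+\sum_i \frac{a_i}{u_i}du_i$ of $\Omega_\cala(\log\cali)$, $\cala$-linearity and the submodule property of $\widehat{Der_\cala(\log\cali)}$ assemble $H(\eta)$ and the terms $a_i\,\frac1{u_i}\{u_i,-\}$ into a single element of $\widehat{Der_\cala(\log\cali)}$, completing the argument. I expect this primality step to be the genuinely substantive point; everything else is formal module-theoretic bookkeeping.
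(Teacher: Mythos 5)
Your proof is correct and follows essentially the same route as the paper: both reduce item 1 and the first half of item 2 to the definitions plus the submodule structure, and both rest item 2's extension on the divisibility fact $\{u_i,u_j\}\in u_iu_j\cala$ (equivalently $\frac{1}{u_iu_j}\{u_i,u_j\}\in\cala$), which is exactly the remark the paper's proof invokes. The only difference is that the paper merely asserts this remark, whereas you derive it from the standing hypotheses on $\cals$ (primality of $u_j\cala$ and $u_i\notin u_j\cala$ for $i\neq j$), so your argument supplies the justification the paper leaves implicit.
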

\begin{proof}
 The first item follows from the definition of a logarithmic Poisson structure.\\
To prove item 2, we shall remark that, if $\{-,-\}$ is a logarithmic
principal Poisson structure on $\cala,$ then for all $i\neq j,
\dfrac{1}{u_iu_j}\{u_i,u_j\}\in\cala.$
\end{proof}
Let  $(\cala,\{-,-\})$ be a logarithmic principal Poisson algebra.
\begin{definition}
$\tilde{H}$ is called logarithmic Hamiltonian map of
$(\cala,\{-,-\})$.
\end{definition}
We define on $\Omega_\cala(\log \cali)$ the following bracket:
\begin{equation*}
 [a\dfrac{du_i}{u_i}, bdc]_s=\dfrac{a}{u_i}\{u_i,b\}dc+ b\{a,c\}\dfrac{du_i}{u_i}+abd(\dfrac{1}{u_i}\{u_i,c\})
\end{equation*}
\begin{equation*}
 [a\dfrac{du_i}{u_i}, b\dfrac{du_j}{u_j}]_s=\dfrac{a}{u_i}\{u_i,b\}\dfrac{du_j}{u_j}+\dfrac{b}{u_j}\{a,u_j\}\dfrac{du_i}{u_i}+abd(\dfrac{1}{u_iu_j}\{u_i,u_j\})
\end{equation*}
\begin{equation*}
 [adc, bde]_s=a\{c,b\}de+ b\{a,e\}dc+abd(\{c,e\})
\end{equation*}
for all $u_i, u_j\in\cals$ and $a, b, c, e\in\cala-\cals.$
\begin{thm} For all  logarithmic principal Poisson algebra $(\cala,\{-,-\})$,
\begin{enumerate}
 \item $[-,-]_s$ is a Lie bracket.
\item $\tilde{H}$ is logarithmic Lie-Rinehart structure on $\Omega_\cala(\log \cali).$
\end{enumerate}
\end{thm}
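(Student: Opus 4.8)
The plan is to avoid verifying the Lie and Lie--Rinehart axioms by brute force, and instead to realize both $[-,-]_s$ and $\tilde H$ as restrictions of the already-known classical Poisson Lie--Rinehart structure, pulled into a localization where the logarithmic generators become genuine Kähler differentials. Concretely, let $S$ be the multiplicative subset of $\cala$ generated by $u_1,\dots,u_p$ and let $\cala_S$ be the corresponding localization. Each Hamiltonian derivation $\{a,-\}$ extends uniquely to $\cala_S$ by the quotient rule, so $(\cala_S,\{-,-\})$ is again a Poisson algebra, and on $\Omega_{\cala_S}$ we have the classical Lie--Rinehart structure recalled just after the definition of Poisson cohomology: the Hamiltonian map $H_S(da)b=\{a,b\}$ and the Koszul bracket $[a\,df,b\,dg]=a\{f,b\}\,dg+b\{a,g\}\,df+ab\,d\{f,g\}$ (this last is exactly the third of the three defining formulas for $[-,-]_s$). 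The key formal remark is that the assignment $\frac{du_i}{u_i}\mapsto\frac{1}{u_i}\,du_i$, together with the canonical map $\Omega_\cala\to\Omega_{\cala_S}$, sends $\Omega_\cala(\log\cali)$ into $\Omega_{\cala_S}$.

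First I would check that under this identification $[-,-]_s$ and $\tilde H$ are precisely the restrictions of the Koszul bracket and of $H_S$. For $\tilde H$ this is immediate from $\cala_S$-linearity of $H_S$: $\tilde H(\frac{du}{u})b=\frac{1}{u}\{u,b\}=H_S(\frac{1}{u}\,du)b$. For the bracket, expanding the Koszul formula applied to $\frac{a}{u_i}\,du_i$ and $b\,dc$ (respectively to $\frac{a}{u_i}\,du_i$ and $\frac{b}{u_j}\,du_j$), and using the Leibniz rule for $\{-,-\}$ and for $d$ on the factors $1/u_i$, reproduces term by term the first two defining formulas for $[-,-]_s$; I would record this computation but not grind through it. Since the target bracket on $\Omega_{\cala_S}$ is intrinsic, this identification simultaneously settles well-definedness of $[-,-]_s$ (independence of the chosen representative of an element of $\Omega_\cala(\log\cali)$), at least once the natural map to $\Omega_{\cala_S}$ is injective, which holds in the free case relevant to the examples; in general the same expansions show directly that the formulas respect $\cala$-bilinearity.

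The crux is the closure step: one must show that $[-,-]_s$ maps $\Omega_\cala(\log\cali)\times\Omega_\cala(\log\cali)$ back into $\Omega_\cala(\log\cali)$ rather than merely into $\Omega_{\cala_S}$, and this is exactly where the logarithmic-principal hypothesis enters. In the mixed bracket, the coefficient $\frac{a}{u_i}\{u_i,b\}$ lies in $\cala$ because $\frac{1}{u_i}\{u_i,b\}\in\cala$ (Definition 1.7), the term $b\{a,c\}\frac{du_i}{u_i}$ lies in the logarithmic part, and the delicate term $ab\,d\!\left(\frac{1}{u_i}\{u_i,c\}\right)$ lies in $\Omega_\cala$ precisely because $\frac{1}{u_i}\{u_i,c\}\in\cala$. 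In the two-logarithm bracket the analogous coefficients again lie in $\cala$ by Definition 1.7, while the term $ab\,d\!\left(\frac{1}{u_iu_j}\{u_i,u_j\}\right)$ lands in $\Omega_\cala$ exactly because $\frac{1}{u_iu_j}\{u_i,u_j\}\in\cala$, the fact established in the proof of Proposition 1.8. Hence each of the three formulas outputs an element of $\Omega_\cala(\log\cali)$, and this verification is the main obstacle of the whole argument.

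With closure secured, the two conclusions follow formally. For item~1, the Koszul bracket on $\Omega_{\cala_S}$ is a Lie bracket, so its restriction to the bracket-closed submodule $\Omega_\cala(\log\cali)$ is again $R$-bilinear, skew-symmetric, and satisfies the Jacobi identity; in particular the Jacobi identity, which would be the real obstacle in a direct computation, is obtained for free. For item~2, the two Lie--Rinehart compatibility conditions relating $\tilde H$ and $[-,-]_s$ restrict from the corresponding identities for $H_S$ and the Koszul bracket, namely the anchor property $\tilde H\big([\omega,\omega']_s\big)=\big[\tilde H\omega,\tilde H\omega'\big]$ and the Leibniz axiom $[\omega,a\omega']_s=\tilde H(\omega)(a)\,\omega'+a[\omega,\omega']_s$. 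Finally, Proposition 1.8 gives $\tilde H\big(\Omega_\cala(\log\cali)\big)\subset\widehat{Der_\cala(\log\cali)}\subset Der_\cala(\log\cali)$, so the structure is logarithmic along $\cali$, which completes item~2.
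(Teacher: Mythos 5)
Your argument is correct, but it takes a genuinely different route from the paper. In fact the paper never proves Theorem 1.10 in general: the only argument it supplies appears in Section 3.1 for the single example $(\mathbb{C}[x,y],\{x,y\}=x)$, where the Lie-bracket axioms are obtained by splitting $\Omega_\cala(\log\cali)\cong\mathbb{C}[y]\frac{dx}{x}\oplus\Omega_\cala$ (Lemma \ref{L1.1}), invoking the theory of split extensions of Lie algebras from \cite{DA} to reduce the Jacobi identity to the coincidence of the brackets (\ref{E14}) and (\ref{E11}) (Lemma \ref{L1.2}), and settling the Leibniz and anchor axioms by ``direct calculation'' (Lemmas \ref{L1.3}, \ref{L1.4}, Proposition \ref{P1.5}). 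You instead work in full generality: you localize at the multiplicative set $S$ generated by $u_1,\dots,u_p$, embed $\Omega_\cala(\log\cali)$ into $\Omega_{\cala_S}$, verify that the three defining formulas for $[-,-]_s$ and the formula for $\tilde H$ are precisely the restrictions of the classical Koszul bracket and Hamiltonian map of the localized Poisson algebra $(\cala_S,\{-,-\})$, and isolate closure of $\Omega_\cala(\log\cali)$ under that bracket as the only place where the logarithmic-principal hypothesis enters --- via $\frac{1}{u_i}\{u_i,b\}\in\cala$ and via $\frac{1}{u_iu_j}\{u_i,u_j\}\in\cala$ from the proof of Proposition 1.8, which in turn rests on the primeness assumptions on $\cals$. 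This purchases the Jacobi identity, both Lie--Rinehart compatibility axioms, and (a point the paper never addresses) the well-definedness of the generator-wise formulas, all ``for free'' by restriction from an intrinsic structure, and it covers every $(\cala,\cals)$ rather than one example; the paper's approach is more self-contained and concrete but needs the extension machinery and does not generalize as stated. Two small points would tighten your write-up: first, read $\Omega_\cala(\log\cali)$ --- as one must for the paper's definition to make sense at all, since $\frac{du_i}{u_i}$ only exists after inverting $u_i$ --- as the $\cala$-submodule of $S^{-1}\Omega_\cala$ generated by the image of $\Omega_\cala$ and the $\frac{du_i}{u_i}$; with that reading your restriction argument is tautologically faithful and the hedge about injectivity of $\Omega_\cala\to\Omega_{\cala_S}$ can simply be deleted. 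Second, state explicitly (one line suffices) that a Poisson bracket extends uniquely to a localization, since that is what makes $(\cala_S,\{-,-\})$ a Poisson algebra and hence makes the Koszul bracket on $\Omega_{\cala_S}$ available.
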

\begin{cor}
Each logarithmic Poisson structure along $\cali$ (logarithmic
principal Poisson structure along $\cals$ ) on $\cala$ induces a
logarithmic-Lie-Rinehart-Poisson structure $\mu$ on
$\Omega_\cala(\log \cali).$
\end{cor}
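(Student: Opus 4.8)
The plan is to realize $[-,-]_s$ and $\tilde H$ as the restriction of the classical Poisson--Koszul Lie--Rinehart structure living on the Kähler differentials of a \emph{localization} of $\cala$, where the logarithmic principal hypothesis is exactly what guarantees that this restriction stays inside $\Omega_\cala(\log\cali)$ and takes anchor values inside $Der_\cala(\log\cali)$. In this way both assertions follow from the already--recalled fact that $\Omega_\cala$ carries a Lie--Rinehart structure for any Poisson algebra, applied not to $\cala$ but to a suitable overalgebra, so that no Jacobi identity has to be checked by hand.

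Concretely, let $S\subset\cala$ be the multiplicative set generated by $u_1,\dots,u_p$ and let $\cala_S=S^{-1}\cala$. The bracket extends canonically to $\cala_S$ via the Leibniz rule $\{s^{-1},a\}=-s^{-2}\{s,a\}$, making $(\cala_S,\{-,-\})$ a Poisson algebra; hence $\Omega_{\cala_S}\cong S^{-1}\Omega_\cala$ is a Lie--Rinehart algebra over $\cala_S$, with anchor $H_S(d\alpha)=\{\alpha,-\}$ and bracket $[fd\alpha,gd\beta]=fg\,d\{\alpha,\beta\}+f\{\alpha,g\}d\beta+g\{f,\beta\}d\alpha$. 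Reading the relation $u_i\cdot\frac{du_i}{u_i}=du_i$ inside $\Omega_{\cala_S}$, I identify $\Omega_\cala(\log\cali)$ with the $\cala$--submodule $\Omega_\cala+\sum_i\cala\,u_i^{-1}du_i$ of $\Omega_{\cala_S}$; under this identification $\tilde H$ is simply $H_S$ restricted to $\Omega_\cala(\log\cali)$, since $\tilde H(\frac{du_i}{u_i})=u_i^{-1}\{u_i,-\}=H_S(u_i^{-1}du_i)$.

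The heart of the argument is then a closure check. First, $\tilde H$ maps $\Omega_\cala(\log\cali)$ into $Der_\cala(\log\cali)$: on a generator one computes $\tilde H(\frac{du_i}{u_i})(u_j)=u_i^{-1}\{u_i,u_j\}$, which by the logarithmic principal hypothesis (and the remark $\frac{1}{u_iu_j}\{u_i,u_j\}\in\cala$ of Proposition~1.9) lies in $u_j\cala\subset\cali$, while $\tilde H(da)(u_j)=\{a,u_j\}\in u_j\cala$; $\cala$--linearity finishes it, and in particular the image lands in $\widehat{Der_\cala(\log\cali)}\subset Der_\cala$, so $H_S$ genuinely restricts to derivations of $\cala$. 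Second, I compute the $\Omega_{\cala_S}$--bracket on the three pairs of generators built from $a\frac{du_i}{u_i},\ b\frac{du_j}{u_j},\ bdc,\ bde$. Expanding, say, $[au_i^{-1}du_i,\,bdc]$ produces a term $-ab\,u_i^{-2}\{u_i,c\}du_i$ which, crucially, recombines with $ab\,u_i^{-1}d\{u_i,c\}$ into $ab\,d\!\left(u_i^{-1}\{u_i,c\}\right)$; because $u_i^{-1}\{u_i,c\}\in\cala$ this last form lies in $\Omega_\cala$, so no denominator $u_i^{-2}$ survives and the whole bracket lands in $\Omega_\cala(\log\cali)$. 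The resulting expression is exactly the first displayed formula for $[-,-]_s$, and the analogous computations reproduce the other two. Thus $\Omega_\cala(\log\cali)$ is closed under both $[-,-]$ and $H_S$.

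Having established closure, the conclusion is immediate: as an $\cala$--submodule of the Lie--Rinehart algebra $\Omega_{\cala_S}$ that is stable under the bracket and on which the anchor takes values in $Der_\cala$, the triple $(\Omega_\cala(\log\cali),\tilde H,[-,-]_s)$ inherits antisymmetry, the Jacobi identity and both Lie--Rinehart compatibility axioms directly from $\Omega_{\cala_S}$; together with the first step this says precisely that $\tilde H$ is a logarithmic Lie--Rinehart structure, proving (1) and (2) at once. The only genuine obstacle is the closure computation above --- verifying that every apparent $u_i^{-2}$ (and $u_i^{-1}u_j^{-1}$, $u_j^{-2}$) term reassembles into the differential of an element of $\cala$ --- and this is exactly the algebraic content of the logarithmic principal hypothesis $u_i^{-1}\{u_i,c\}\in\cala$. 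If one prefers to avoid localization, the same three formulas can be checked directly, but then well--definedness with respect to the relation $u_i\frac{du_i}{u_i}=du_i$ and the Jacobi identity must both be ground out by hand.
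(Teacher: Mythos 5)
There is a genuine gap here, and it is not a small one: you have proved the wrong statement. What your localization argument establishes --- that $[-,-]_s$ is a Lie bracket on $\Omega_\cala(\log\cali)$ and that $\tilde{H}$ is a logarithmic Lie--Rinehart structure --- is the content of the paper's Theorem 1.10, which this Corollary already takes as input. The Corollary itself asserts something further: by Definition 1.3, a logarithmic-Lie-Rinehart-Poisson structure on $(\Omega_\cala(\log\cali),\tilde{H})$ is a skew-symmetric $2$-form $\mu:\Omega_\cala(\log\cali)\times\Omega_\cala(\log\cali)\rightarrow\cala$ satisfying $d_{\tilde{H}}\mu=0$. Your proposal never defines any such $\mu$ (the symbol does not appear in your argument), never checks that its values lie in $\cala$, and never verifies the closedness condition. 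The missing content is: set, up to sign, $\mu(\alpha,\beta):=\hat{\sigma}(\tilde{H}(\alpha))(\beta)$, i.e.\ on generators $\mu(da,db)=\{a,b\}$, $\mu(\dfrac{du_i}{u_i},db)=\dfrac{1}{u_i}\{u_i,b\}$, $\mu(\dfrac{du_i}{u_i},\dfrac{du_j}{u_j})=\dfrac{1}{u_iu_j}\{u_i,u_j\}$; these values lie in $\cala$ precisely by the logarithmic principal hypothesis (the same fact you invoke for closure of the bracket), skew-symmetry is that of $\{-,-\}$, and $d_{\tilde{H}}\mu=0$ reduces, when evaluated on triples of generators, to the Jacobi identity of $\{-,-\}$ transported through $\tilde{H}$. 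Without this last block, what you have written does not touch the conclusion being claimed.

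That said, the part you did prove, you proved by a genuinely different and rather attractive route: the paper states Theorem 1.10 without proof (and in Section 3 verifies the analogous facts by direct computation, Lemmas 3.2--3.5), whereas your realization of $\Omega_\cala(\log\cali)$ as an $\cala$-submodule of $\Omega_{\cala_S}$, stable under the Koszul bracket and with anchor values in $Der_\cala$, imports the Jacobi identity and both Lie--Rinehart axioms for free; this would serve perfectly well as the first half of a correct proof of the Corollary. Two caveats if you complete it this way: (i) the identification of $\Omega_\cala(\log\cali)$ with a submodule of $\Omega_{\cala_S}$ needs the $u_i$ to be non-zero-divisors (so that $\cala\rightarrow\cala_S$ and $\Omega_\cala\rightarrow S^{-1}\Omega_\cala$ behave); primality of the ideals $u_i\cala$ alone does not guarantee this outside of domains, though it holds in all of the paper's examples; (ii) the Corollary as stated also covers logarithmic Poisson structures along $\cali$ that are not principal, a case in which $\tilde{H}$ --- and hence both your argument and the paper's --- is not even defined; that defect is shared with the paper, but you should at least flag that your proof only treats the principal case.
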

Given a logarithmic principal Poisson structure $\{-,-\}$ on $\cala$
and $\mu$ the associated logarithmic-Lie-Rinehart-Poisson structure
we have:
\begin{proposition}\label{P1.10}
 $\mu$ is a logarithmic-Lie-Rinehart-symplectic structure if and only if $\tilde{H}$ is an isomorphism.
\end{proposition}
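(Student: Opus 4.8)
The plan is to reduce the statement to a single factorization of the non-degeneracy map $I$ through the logarithmic Hamiltonian map $\tilde{H}$ and the duality isomorphism $\hat{\sigma}$ of Proposition~1.6, after which the equivalence is purely formal. Write $L=\Omega_\cala(\log \cali)$. By the definition of a logarithmic Lie-Rinehart-symplectic algebra, $\mu$ is symplectic exactly when the $\cala$-linear map $I:L\to\mathcal{H}om(L,\cala)$, $l\mapsto i_l\mu$, is an isomorphism. On the other hand $\tilde{H}:L\to\widehat{Der_\cala(\log \cali)}$ is the logarithmic Hamiltonian map, and Proposition~1.6 supplies an isomorphism $\hat{\sigma}:\widehat{Der_\cala(\log \cali)}\to\mathcal{H}om(L,\cala)$. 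Hence the whole statement will follow once I establish the identity $I=\hat{\sigma}\circ\tilde{H}$: since $\hat{\sigma}$ is an isomorphism, $I$ is an isomorphism if and only if $\tilde{H}$ is.

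The core step is therefore to verify $I=\hat{\sigma}\circ\tilde{H}$, equivalently $\mu(\alpha,\beta)=\hat{\sigma}(\tilde{H}(\alpha))(\beta)$ for all $\alpha,\beta\in L$. Both sides are $\cala$-bilinear, so it suffices to test them on the generators $\frac{du_i}{u_i}$ and $dc$ (with $c\in\cala$) of $L$. Using the universal property $\sigma(\delta)(df)=\delta(f)$, the defining formula $\hat{\sigma}(\delta)(a\frac{du_i}{u_i}+b\,dc)=\frac{a}{u_i}\sigma(\delta)(du_i)+b\,\sigma(\delta)(dc)$, together with $\tilde{H}(\frac{du_i}{u_i})=\frac{1}{u_i}H(du_i)$ and $H(da)(b)=\{a,b\}$, I would compute $\hat{\sigma}(\tilde{H}(\frac{du_i}{u_i}))(\frac{du_j}{u_j})=\frac{1}{u_iu_j}\{u_i,u_j\}$, then $\hat{\sigma}(\tilde{H}(\frac{du_i}{u_i}))(dc)=\frac{1}{u_i}\{u_i,c\}$, and finally $\hat{\sigma}(\tilde{H}(dc))(de)=\{c,e\}$. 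These three values are precisely the coefficients read off from the bracket formulas $[-,-]_s$, i.e.\ the components of the $2$-form $\mu$ induced by the Poisson bracket in Corollary~1.11, so the identity holds on generators and hence everywhere; any discrepancy of sign between $i_l\mu$ and this pairing is irrelevant to the conclusion.

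Once $I=\hat{\sigma}\circ\tilde{H}$ is in hand the equivalence is immediate in both directions and no further computation is required. I expect the main obstacle to be exactly this one identity rather than anything conceptual: one must confirm that the form $\mu$ produced by Corollary~1.11 genuinely agrees with the pairing $\langle\tilde{H}(-),-\rangle$ on every \emph{mixed} pair of generators, and that $\mu$ is a well-defined, skew-symmetric $\cala$-bilinear form on $L$, so that testing on generators suffices. The cleanest route is to adopt $\mu(\alpha,\beta):=\hat{\sigma}(\tilde{H}(\alpha))(\beta)$ as the working description of the induced structure and then check it against the three cases of $[-,-]_s$ for consistency; granting this, the proposition reduces to the triviality that composing with an isomorphism preserves the property of being an isomorphism.
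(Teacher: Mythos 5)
Your proposal is correct and follows essentially the same route as the paper: both reduce the statement to the factorization of $I$ through $\tilde{H}$ and the duality isomorphism $\hat{\sigma}$ of Proposition 1.6, whence $I$ is an isomorphism if and only if $\tilde{H}$ is. The only difference is that the paper uses the identity $I(x)=-\hat{\sigma}(\tilde{H}(x))$ silently, merely spelling out the resulting injectivity and surjectivity transfers element by element, whereas you isolate that identity as the crux and verify it on the generators $\frac{du_i}{u_i}$, $dc$ --- a verification the paper omits, so your write-up is if anything more complete.
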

 \begin{proof}
  Suppose that $\tilde{H}$ is an isomorphism.\\
Let $x, y\in \Omega_\cala(\log\cali)$ such that $I(x)=I(y).$ Then \\
$-\hat{\sigma}(\tilde{H}(x))=-\hat{\sigma}(\tilde{H}(y)).$ Therefore, $x=y$ and we conclude that $I$ is an monomorphism.\\
Let $\psi\in\mathcal{H}om(\Omega_\cala(\log\cali)),$ we seek $x\in \Omega_\cala(\log\cali)$ such that; $I(x)=\psi.$\\
Since $\psi\in\mathcal{H}om(\Omega_\cala(\log\cali)),\quad \hat{\sigma}^{-1}(\psi)\in\widehat{Der_\cala(\log\cali)}.$ Therefore, there is
$z\in \Omega_\cala(\log\cali)$ such that $\tilde{H}(z)=\sigma^{-1}(\psi);$ \quad i.e;\quad
$I(-z)=\hat{\sigma}(\tilde{H}(z))=\psi.$ Just take $x=-z.$ \\
Conversely, we suppose that $I$ is an isomorphism and we shall prove that $\tilde{H}$ is also an isomorphism.\\
If $\tilde{H}(x)=\tilde{H}(y),$ then $-\hat{\sigma}(\tilde{H}(x))=-\hat{\sigma}(\tilde{H}(y))$ i.e; $I(x)=I(y).$
Then $x=y.$\\
For all $\delta\in\widehat{Der_\cala(\log\cali)}$, there is
$x\in\Omega_\cala(\log\cali)$ such that; $\hat{\sigma}(\delta)=I(x)=-\hat{\sigma}(\tilde{H}(x)).$
 \end{proof}
Let $f\in\Omega^p_\cala(\log\cali)$ we define $\tilde{H}(f)\in Alt^{p}(\Omega_\cala(\log\cali),\cala)$ by\\
$\tilde{H}(f)(\alpha_1,...,\alpha_p):=(-1)^pf(\tilde{H}(\alpha_1),...,\tilde{H}(\alpha_p)).$
\begin{cor}
 If $(\cala, \{-,-\})$ is a logarithmic principal Poisson algebra, then
\begin{equation*}
 d_{\tilde{H}}\circ \tilde{H}=-\tilde{H}\circ d
\end{equation*}
\end{cor}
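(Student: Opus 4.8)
The identity is the logarithmic analogue of the classical fact that the bundle map induced by a Poisson bivector intertwines the de Rham and Lichnerowicz--Poisson differentials up to sign. Accordingly, the plan is to read $d_{\tilde{H}}\circ\tilde{H}=-\tilde{H}\circ d$ as the assertion that $\tilde{H}$ is, up to sign, a morphism of cochain complexes from the logarithmic de Rham complex $(\Omega^\bullet_\cala(\log\cali),d)$ to the Lie--Rinehart complex $(Alt^\bullet_\cala(\Omega_\cala(\log\cali),\cala),d_{\tilde{H}})$, and to prove it by reducing to algebra generators.

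First I would record two structural facts. (i) With the factor $(-1)^p$ built into its definition in degree $p$, the map $\tilde{H}$ is a morphism of graded-commutative $\cala$-algebras: writing $\tilde{H}$ in degree $p$ as $(-1)^p$ times the map functorially induced by the degree-one anchor $\tilde{H}_0$ of Proposition 1.6, the signs $(-1)^p(-1)^q$ cancel against $(-1)^{p+q}$, so that $\tilde{H}(f\wedge g)=\tilde{H}(f)\wedge\tilde{H}(g)$. (ii) Both $d$ and $d_{\tilde{H}}$ are degree-one graded derivations of their respective exterior algebras. Consequently the operator $\Delta:=d_{\tilde{H}}\circ\tilde{H}+\tilde{H}\circ d$ is a graded derivation relative to the morphism $\tilde{H}$, in the sense that $\Delta(f\wedge g)=\Delta(f)\wedge\tilde{H}(g)+(-1)^{|f|}\tilde{H}(f)\wedge\Delta(g)$. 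Such an operator vanishes identically as soon as it vanishes on a generating set of the algebra $\Omega^\bullet_\cala(\log\cali)$, namely $\Omega^0=\cala$ together with the degree-one generators $da$ for $a\in\cala$ and $\tfrac{du_i}{u_i}$ for $u_i\in\cals$. So it suffices to check $\Delta=0$ in these three cases.

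For $a\in\cala=\Omega^0$ one has $\tilde{H}(a)=a$, and evaluating on any $\alpha$ gives $d_{\tilde{H}}(a)(\alpha)=\tilde{H}(\alpha)(a)$, whereas $\tilde{H}(da)(\alpha)=-da(\tilde{H}(\alpha))=-\tilde{H}(\alpha)(a)$; the two cancel. For $\omega=da$ one has $d\omega=0$, so one must show $d_{\tilde{H}}(\tilde{H}(da))=0$: expanding $d_{\tilde{H}}$ of the $1$-form $\alpha\mapsto-\tilde{H}(\alpha)(a)$ and grouping terms reduces exactly to the bracket-compatibility $\tilde{H}([\alpha,\beta]_s)=[\tilde{H}(\alpha),\tilde{H}(\beta)]$ supplied by the second part of Theorem 1.10, which forces vanishing. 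The genuinely new case is $\omega=\tfrac{du_i}{u_i}$, which is $d$-closed since $d(\tfrac{1}{u_i}du_i)=-\tfrac{1}{u_i^2}\,du_i\wedge du_i=0$. Using the logarithmic pairing $\langle\tfrac{du_i}{u_i},\delta\rangle=\tfrac{1}{u_i}\delta(u_i)$ from Proposition 1.6, I would compute $d_{\tilde{H}}$ of the $1$-form $\gamma\mapsto-\tfrac{1}{u_i}\tilde{H}(\gamma)(u_i)$: two $\tfrac{1}{u_i^2}$-terms cancel by symmetry, leaving $\tfrac{1}{u_i}[\tilde{H}(\alpha),\tilde{H}(\beta)](u_i)$, which is cancelled by $\tfrac{1}{u_i}\tilde{H}([\alpha,\beta]_s)(u_i)$, once more by Theorem 1.10.

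The main obstacle is not any single computation but the bookkeeping that legitimizes the reduction: verifying that the $(-1)^p$ normalization indeed makes $\tilde{H}$ multiplicative and that $\Delta$ is a derivation over $\tilde{H}$ with the correct signs, so that checking on generators suffices. The only substantively new point beyond the classical Lichnerowicz argument is the generator $\tfrac{du_i}{u_i}$, where one must use that logarithmic principal derivations send $u_i$ into $u_i\cala$, so that $\tfrac{1}{u_i}\delta(u_i)\in\cala$ and every intermediate expression stays inside $\cala$; granting this, the vanishing again rests on the bracket-compatibility of $\tilde{H}$.
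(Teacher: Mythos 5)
Your proof is correct, but there is nothing in the paper to compare it against: the corollary is stated with no proof at all, presented as an immediate consequence of the material preceding it. Your argument supplies exactly the missing derivation, and it hinges on the same pivot that the paper's placement suggests, namely the bracket compatibility $\tilde{H}([\alpha,\beta]_s)=[\tilde{H}(\alpha),\tilde{H}(\beta)]$ contained in Theorem 1.10 (the statement that $\tilde{H}$ is a logarithmic Lie--Rinehart structure). Your reduction to generators is legitimate: with the $(-1)^p$ normalization, $\tilde{H}$ is indeed multiplicative (your sign cancellation is right); every element of $\Omega^\bullet_\cala(\log\cali)$ is a sum of products $a\,\omega_1\wedge\cdots\wedge\omega_p$ of degree-zero elements and the one-forms $da$, $\frac{du_i}{u_i}$; and your operator $\Delta$ is additive and a derivation over $\tilde{H}$, so its vanishing on these generators forces $\Delta=0$ everywhere. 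The three generator checks are sound; for the logarithmic generator a slightly cleaner bookkeeping is to write $\tilde{H}(\alpha)(u_i)=u_ia$ and $\tilde{H}(\beta)(u_i)=u_ib$ with $a,b\in\cala$, so that $[\tilde{H}(\alpha),\tilde{H}(\beta)](u_i)=u_i\bigl(\tilde{H}(\alpha)(b)-\tilde{H}(\beta)(a)\bigr)$ and every intermediate term manifestly stays in $\cala$, rather than cancelling $1/u_i^2$ terms in a localization. The only ingredients you invoke without verification --- that $d_{\tilde{H}}$ is a degree-one graded derivation of $Alt_\cala(\Omega_\cala(\log\cali),\cala)$ and likewise $d$ on the logarithmic de Rham algebra --- are classical for Lie--Rinehart complexes (they go back to Rinehart's paper, cited as \cite{GR}), though the paper never states them either; a complete write-up should cite or check them, since they are precisely what makes the generators-plus-Leibniz reduction legitimate.
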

\begin{definition}
Let $(\cala, \{-,-\})$ be a logarithmic principal Poisson algebra
along an ideal $\cali.$ We call logarithmic Poisson cohomology the
Lie-Rinehart logarithmic cohomology associated to the action
$\tilde{H}:\Omega_\cala(\log \cali)\rightarrow
Der_\cala(\log\cali).$\\ We write $H^*_{PS}$ for the associated
cohomology space.
\end{definition}
 Let $\mu\in \bigwedge^2Der(\log\cali)$ be a log symplectic
structure on $\cala.$ According to the definition of
logarithmic-Lie-Rinehart-symplectic structure, the above map $I$
defines an isomorphism; which induces an isomorphism between Poisson
cohomology $H^*_P$ and logarithmic De Rham cohomology
$H^*_{DS}.$\footnote{Where DS means De Rham Saito.} In other hand,
the above proposition proves that $\tilde{H}$ is an isomorphism
between logarithmic Poisson cohomology $H^*_{PS}$\footnote{Where PS
means Poisson Saito} and logarithmic De Rham cohomology $H^*_{DS}.$\\
Therefore, we have the following diagram of chain complex.
\[\xymatrix{(\Omega^*_\cala(\log\cali), d)\ar[rr]^{\cong}\ar[drr]_{\cong}&&(Der^*_\cala(\log\cali), d_H)\ar[d]^{\cong}\\
&&(Der^*_\cala(\log\cali), d_{\tilde{H}})}\]
We conclude that:
\begin{cor}
 If $\mu\in \bigwedge^2Der(\log\cali)$ is a log symplectic structure on $\cala,$ then
\begin{equation*}
 H^*_P\cong H^*_{DS}\cong H^*_{PS}
\end{equation*}
\end{cor}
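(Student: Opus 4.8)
The plan is to identify all three cohomologies as the cohomology of mutually isomorphic cochain complexes, exactly as encoded in the displayed triangle: the logarithmic de Rham complex $(\Omega^*_\cala(\log\cali),d)$ sits at the apex, and the two legs are the module isomorphisms $I$ and $\tilde H$, each promoted to a map of complexes. Once both legs are shown to intertwine the relevant differentials, passing to cohomology yields $H^*_{DS}\cong H^*_P$ and $H^*_{DS}\cong H^*_{PS}$, hence the chain $H^*_P\cong H^*_{DS}\cong H^*_{PS}$.

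For the first leg I would argue as follows. Because $\mu$ is log symplectic it is, by the definition of a logarithmic-Lie-Rinehart-symplectic structure, a nondegenerate logarithmic-Lie-Rinehart-Poisson form, so the contraction map $I\colon\Omega_\cala(\log\cali)\to\mathcal{H}om(\Omega_\cala(\log\cali),\cala)$, $l\mapsto i_l\mu$, is an isomorphism of $\cala$-modules. Extending $I$ to alternating forms in the standard contravariant way, I would check that it intertwines the logarithmic de Rham differential $d$ with the Hamiltonian differential $d_H$; this is the logarithmic analogue of the classical statement that the musical isomorphism attached to a nondegenerate closed $2$-form carries the de Rham complex onto the complex computing Poisson cohomology, and the closedness condition $d_\rho\mu=0$ built into the notion of a Poisson structure is precisely what guarantees that $I$ is a cochain map. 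This gives $H^*_{DS}\cong H^*_P$. For the second leg, Proposition~\ref{P1.10} shows that, $\mu$ being logarithmic-Lie-Rinehart-symplectic, the logarithmic Hamiltonian map $\tilde H$ is an isomorphism; and the corollary $d_{\tilde H}\circ\tilde H=-\tilde H\circ d$ says exactly that the degree-preserving map $f\mapsto\tilde H(f)$ intertwines $d$ and $d_{\tilde H}$ up to the sign $-1$. A degree-$0$ module isomorphism $\phi$ satisfying $d'\phi=-\phi d$ still sends cocycles to cocycles and coboundaries to coboundaries, so it descends to an isomorphism on cohomology; the sign is therefore immaterial, and we obtain $H^*_{DS}\cong H^*_{PS}$.

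Assembling the two legs as in the diagram gives the asserted string of isomorphisms. I expect the single genuine point of work to be hidden in the first leg, namely the verification that the contraction isomorphism $I$ is compatible, up to sign, with $d$ and $d_H$ in the logarithmic setting; the rest is formal, since an $\cala$-module isomorphism that intertwines two differentials (even up to a nonzero scalar) automatically induces an isomorphism on the associated cohomologies, and for the $\tilde H$-leg this compatibility is already recorded in the preceding corollary. A secondary point worth confirming is that, in the log symplectic case, the complex computing the a priori ordinary Poisson cohomology $H^*_P$ really is the logarithmic complex $(Der^*_\cala(\log\cali),d_H)$ appearing in the diagram; this should follow because a log symplectic $\mu$ forces every Hamiltonian derivation to be logarithmic, so the logarithmic and non-logarithmic Poisson complexes coincide here.
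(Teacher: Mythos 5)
Your proposal is correct and follows essentially the same route as the paper: the paper also establishes $H^*_P\cong H^*_{DS}$ via the nondegeneracy isomorphism $I$, establishes $H^*_{DS}\cong H^*_{PS}$ via Proposition~1.10 together with the identity $d_{\tilde H}\circ\tilde H=-\tilde H\circ d$, and concludes by composing the two legs of the displayed triangle of chain complexes. The only difference is one of detail: the paper simply asserts these intertwining facts, whereas you explicitly flag the verifications (the cochain-map property of $I$, the harmlessness of the sign, and the coincidence of the ordinary and logarithmic Poisson complexes in the log symplectic case) that the paper leaves implicit.
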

\section{Log symplectic manifold.}
It is well known that the first examples of Poisson manifolds are
symplectics manifolds. In this section, we recall the notion of log
symplectic manifold and we prove that their induce a logarithmic
Poisson manifolds. Of cause, we need to recall the notion of
logarithmic forms. In this section, $X$ denote a final dimensional complex
manifold and $h$ a holomorphic map on $X.$
\begin{definition}
$h$ is square free if each factor of $h$ is simple.
\end{definition}
Let $D$ be a divisor of $X$ defined by a square free holomorphic
function $h.$
\begin{definition}
A meromorphic p-forme $\omega$ is saying logarithmic along $D$ if
$h\omega$ and $hd\omega$ are holomorphic forms.
\end{definition}
We denote $\Omega^p_X(\log D)$ the $\mathcal{O}_X$-module of
logarithmic p-formes on $D.$\\ As in \cite{KS}, a vector field $\delta$ is logarithmic along $D$ if $\delta(h)=h\mathcal{O}_X.$ We denote $\mathfrak{X}_X(\log D)$ the module of logarithmic vector fields on $X.$ 
\begin{rem}
According to our definition of logarithmic forms, $\dfrac{dy}{x}$ is
not logarithmic along the divisor $D$ defined by the set of zeros of
$x^2$ in $\mathbb{C}^2$ because the square free defining function of
$D$ is $x$ and we have $xd(\dfrac{dy}{x^2})=x(\dfrac{dx\wedge
dy}{x^2})=\dfrac{dx\wedge dy}{x}$ who is not holomorphic 2-form. But
follow K. Saito definition of logarithmic forms (see \cite{KS}
Definition 1.2 ) and consider $x^2$ as defining function of $D,$ we
have:\\ $x^2(d(\dfrac{dy}{x^2})=x(\dfrac{dx\wedge
dy}{x^2}))=dx\wedge dy\in\Omega^2_X.$ An then $\dfrac{dy}{x}$ is
logarithmic form. Moreover, this imply that $\{\dfrac{dx}{x^2},
\dfrac{dy}{x}\}$ is free base of $\Omega_X(\log D).$ This contradict
item i) of Theorem 1.8 in \cite{KS} since $\dfrac{dx}{x^2}\wedge
\dfrac{dy}{x}=\dfrac{1}{x^3}dx\wedge dy\neq\dfrac{unit}{x^2}dx\wedge
dy.$ Therefore, we shall add hypothesis square free in K. Saito
definition in \cite{KS}.
\end{rem}
 In addition, we suppose that $dim_\mathbb{C}X=2n$ and $X$ is compact.
\begin{definition}\cite{RG}
A pair $(X, D)$ is a log symplectic manifold if there is a
logarithmic 2-form $\omega\in\Omega^2_X(\log D)$ satisfying\\
$d\omega=0,$ \quad and  \quad
$\overset{n}{\overbrace{\omega\wedge\omega\wedge...\wedge\omega}}\neq
0\in H^{2n}(X,\Omega^{*}([D])).$
\end{definition} From this definition, we deduce the following lemma.
\begin{lemma}
Let $(X,D)$ be a log symplectic manifold with log symplectic 2-form $\omega.$
The map $\omega^\flat:\mathfrak{X}_X(\log D)\rightarrow \Omega_X(\log D)$\quad 
$\delta\mapsto i_\delta\omega$ is a quasi-isomorphism between Poisson cohomology and logarithmic De Rham cohomology of $X.$
\end{lemma}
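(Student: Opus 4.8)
The plan is to upgrade $\omega^\flat$ from the stated contraction map into an \emph{isomorphism of chain complexes}, from which the quasi-isomorphism assertion follows at once. Throughout I take the nondegeneracy of $\omega$ in its pointwise reading, namely that $\omega^{\wedge n}$ is a nowhere-vanishing section of the log-canonical sheaf $\Omega^{2n}_X(\log D)$, and I use that, $h$ being square free, both $\mathfrak{X}_X(\log D)$ and $\Omega_X(\log D)$ are locally free of rank $2n$ and mutually dual.

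First I would verify that $\omega^\flat$ really lands in $\Omega_X(\log D)$ and is an isomorphism of $\mathcal{O}_X$-modules $\mathfrak{X}_X(\log D)\to\Omega_X(\log D)$. For a logarithmic vector field $\delta$ one has $h\,i_\delta\omega=i_\delta(h\omega)$ holomorphic, and by Cartan's formula together with $d\omega=0$ one gets $h\,d(i_\delta\omega)=h\,\mathcal{L}_\delta\omega=\mathcal{L}_\delta(h\omega)-(\delta h)\,\omega$, which is holomorphic since $h\omega$ is holomorphic and $\delta h\in h\mathcal{O}_X$; hence $i_\delta\omega\in\Omega_X(\log D)$. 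Bijectivity then comes from nondegeneracy: a fibrewise nondegenerate alternating form on the rank-$2n$ locally free sheaf $\mathfrak{X}_X(\log D)$ induces a fibrewise isomorphism $\delta\mapsto i_\delta\omega$ onto the dual $\Omega_X(\log D)$, whence $\omega^\flat$ is an isomorphism of sheaves (in local coordinates adapted to $D$ a log-Darboux normal form exhibits the inverse explicitly).

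Next I would extend $\omega^\flat$ multiplicatively to the exterior powers, obtaining isomorphisms $\bigwedge^p\mathfrak{X}_X(\log D)\to\Omega^p_X(\log D)$ for every $p$; on the source these carry the Lichnerowicz--Poisson differential computing $H^*_P$, on the target the de Rham differential $d$ computing $H^*_{DS}$. The decisive point is that this family of isomorphisms is a chain map intertwining the two differentials up to sign. The base case is the relation $\omega^\flat(X_f)=df$ for the Hamiltonian field $X_f=H(df)$, which merely restates how the Poisson bracket is read off from $\omega$; since $d\omega=0$, the graded Leibniz rule propagates it to all degrees. In the Lie--Rinehart language of Section~1 this is exactly the Corollary asserting $d_{\tilde H}\circ\tilde H=-\tilde H\circ d$ applied to $\tilde H=(\omega^\flat)^{-1}$, so I would invoke it rather than recompute. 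An isomorphism of modules that is a chain map is an isomorphism of complexes, hence a quasi-isomorphism, giving $H^*_P\cong H^*_{DS}$.

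The main obstacle is the bijectivity in the first step, and in particular the behaviour along $D$: one must confirm both that $i_\delta\omega$ stays \emph{logarithmic} rather than merely meromorphic, and that every logarithmic $1$-form is attained. This is precisely where the square-free hypothesis and the sharp nondegeneracy $\omega^{\wedge n}\neq 0$ are indispensable, since for non-reduced $h$ the duality between $\mathfrak{X}_X(\log D)$ and $\Omega_X(\log D)$ breaks down, as illustrated by the Remark of this section. Once bijectivity on modules is secured, the chain-map property is essentially formal given $d\omega=0$ and the Corollary above.
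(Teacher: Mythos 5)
Your proposal is correct and follows essentially the same route as the paper: the paper's entire proof of this lemma is the single sentence that the claim ``follows from the fact that $\omega$ is non degenerate,'' i.e.\ exactly the nondegeneracy-gives-an-isomorphism-of-complexes argument you develop, with the same pointwise reading of the condition $\omega^{\wedge n}\neq 0$. Your write-up simply supplies the details the paper leaves implicit — membership of $i_\delta\omega$ in $\Omega_X(\log D)$ via Cartan's formula and the square-free hypothesis, and the chain-map property via the identity $d_{\tilde H}\circ\tilde H=-\tilde H\circ d$ from Section~1.
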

\begin{proof}
It follow from the fact that $\omega$ is non degenerated.
\end{proof}
From this lemma, it follows that for all $f,g\in\mathcal{O}_X,$ there is unique $X_f, X_g\in\mathfrak{X}_X(\log D)$ such that $\omega^\flat(X_f)=df$ and $\omega^\flat(X_g)=dg.$ Therefore, the following bracket 
$\{f,g\}:=\omega(X_f, X_g)$ is well defined.
\begin{proposition}
Let $(X,D)$ be a log symplectic manifold. The bracket
\begin{equation}\label{E0}
\{f,g\}:=\omega(X_f, X_g)
\end{equation}
 is logarithmic principal Poisson structure on $\mathcal{O}_X.$
\end{proposition}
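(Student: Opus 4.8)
### Proof Strategy

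The plan is to verify directly that the bracket $\{f,g\} := \omega(X_f, X_g)$, already shown to be well-defined via the preceding Lemma, satisfies the two structural requirements: that it is a genuine Poisson bracket (bilinear, skew-symmetric, Leibniz in each slot, and Jacobi), and then the crucial logarithmic-principal property, namely that for each defining function $u_i$ of $D$ and each $f \in \mathcal{O}_X$, the quotient $\tfrac{1}{u_i}\{f,u_i\}$ lies in $\mathcal{O}_X$. The skew-symmetry is immediate from the skew-symmetry of $\omega$, and bilinearity follows from the $\mathcal{O}_X$-linearity of $f \mapsto X_f$ together with bilinearity of $\omega$. So the content splits into three pieces: Leibniz, Jacobi, and the logarithmic estimate.

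First I would dispose of the Leibniz rule. The key observation is that the Hamiltonian assignment is compatible with products: since $\omega^\flat$ is an isomorphism and $d(fg) = f\,dg + g\,df$, one gets $X_{fg} = f X_g + g X_f$ from injectivity of $\omega^\flat$. Then
\begin{equation*}
\{fg, h\} = \omega(X_{fg}, X_h) = \omega(f X_g + g X_f, X_h) = f\,\omega(X_g,X_h) + g\,\omega(X_f,X_h) = f\{g,h\} + g\{f,h\},
\end{equation*}
which is exactly the derivation property. Next, for the Jacobi identity, I would use the classical computation for closed 2-forms: because $d\omega = 0$ (part of the log symplectic hypothesis), the standard identity expressing $d\omega$ evaluated on $(X_f, X_g, X_h)$ in terms of the cyclic sum of $\{f,\{g,h\}\}$ forces the Jacobi identity to hold. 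Concretely, $X_{\{f,g\}} = [X_f, X_g]$ follows from $d\omega=0$, and Jacobi for the bracket is then equivalent to the Jacobi identity for the Lie bracket of vector fields, which holds automatically. Here I may invoke that $\mathfrak{X}_X(\log D)$ is closed under the Lie bracket, so that all Hamiltonian fields remain logarithmic.

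The main obstacle, and the heart of the statement, is the logarithmic principal property. The point is that $X_f \in \mathfrak{X}_X(\log D)$ for every $f$, so by definition of logarithmic vector fields $X_f(h) \in h\,\mathcal{O}_X$, where $h$ is the square-free defining function of $D$. Writing $h$ (up to a unit) as the product $u_1 \cdots u_p$ of the prime generators in $\cals$, and using that the $u_i$ generate distinct prime ideals with $u_i \notin u_j\mathcal{O}_X$ for $i \neq j$, I would argue that the divisibility $X_f(u_1 \cdots u_p) \in (u_1\cdots u_p)\mathcal{O}_X$ localizes to each factor: evaluating modulo $u_i$ and using that $u_i\mathcal{O}_X$ is prime, one extracts $X_f(u_i) \in u_i\mathcal{O}_X$ for each $i$. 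Since $\{f, u_i\} = \omega(X_f, X_{u_i}) = -\,du_i(X_f) = -X_f(u_i)$ (up to the usual sign from $\omega^\flat(X_{u_i}) = du_i$), this gives $\tfrac{1}{u_i}\{f,u_i\} \in \mathcal{O}_X$, which is precisely the condition that $\{f,-\} \in \widehat{Der_\mathcal{A}(\log \cali)}$. I expect the factorization step — passing from divisibility by the product $h$ to divisibility by each separate prime factor $u_i$ — to require the most care, and it is exactly where the square-free hypothesis on $h$ and the primality assumptions on the $u_i\mathcal{O}_X$ are indispensable; without square-freeness the Remark shows the logarithmic condition degenerates and the argument collapses.
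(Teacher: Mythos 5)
Your proof is correct, and its core coincides with the paper's: the decisive fact is that every Hamiltonian field $X_f$ lies in $\mathfrak{X}_X(\log D)$, so that $\{f,-\}$ acts by logarithmic derivations. But the paper stops there --- its entire proof is the one line asserting $\{f,-\}=i_{X_f}\omega\in\mathfrak{X}_X(\log D)$ --- which, read strictly, only establishes that the bracket is a \emph{logarithmic} Poisson structure (it preserves the ideal of $D$), not the stronger \emph{logarithmic principal} property that the proposition actually asserts. You supply exactly the missing bridge: writing the square-free defining function as $h=u_1\cdots u_p$ with each $u_i\mathcal{O}_X$ prime and $u_i\notin u_j\mathcal{O}_X$ for $i\neq j$, you expand $X_f(u_1\cdots u_p)\in h\mathcal{O}_X$ by the Leibniz rule, reduce modulo $u_i$, and use primality to conclude $X_f(u_i)\in u_i\mathcal{O}_X$, hence $\tfrac{1}{u_i}\{f,u_i\}\in\mathcal{O}_X$ via $\{f,u_i\}=\omega(X_f,X_{u_i})=-X_f(u_i)$. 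This localization-to-prime-factors step is absent from the paper, and it is precisely where the square-free hypothesis (which the author insists on in the Remark correcting Saito's definition) does its work; without it the implication from logarithmic to logarithmic principal genuinely fails. Your verification of the Poisson axioms (Leibniz via $X_{fg}=fX_g+gX_f$, Jacobi via $d\omega=0$ and $X_{\{f,g\}}=[X_f,X_g]$) is treated by the paper as classical and left unstated; including it costs little and makes the proposition self-contained. In short: same key idea, but your write-up proves the statement as literally formulated, whereas the paper's proof only covers the weaker ``logarithmic'' conclusion.
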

\begin{proof}It follow from the fact that 
for all $f\in \mathcal{O}_X,$
$\{f,-\}=i_{X_f}\omega\in \mathfrak{X}_X(\log D)$
\end{proof}
We have a logarithmic generalization of Darboux'theorem:
\begin{lemma}\cite{RG}
Let $(X,D)$ be a log symplectic manifold with a logarithmic form $\omega,$ where $D$ is a reduced divisor. There exist holomorphic coordinate $(z_0, z_1,...,z_{2n-1})$ of a neighborhood of each smooth point of smooth part of $D$ such that $\omega$ is given by
$\omega=\dfrac{dz_0}{z_0}\wedge dz_1+dz_2\wedge dz_3+...+dz_{2n-2}\wedge dz_{2n-1}.$ Where $\{z_0=0\}=D.$ We refer to these coordinates as log Darboux coordinates.
\end{lemma}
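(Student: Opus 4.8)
The plan is to establish the normal form by a Moser-type deformation carried out inside the logarithmic de Rham complex, after a preliminary straightening of the residue data of $\omega$. Throughout I work in a small polydisc around a fixed smooth point $p$ of the smooth part of $D$, choosing holomorphic coordinates so that $p=0$ and $D=\{z_0=0\}$.

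First I would record a residue decomposition. Writing
\begin{equation*}
\omega=\frac{dz_0}{z_0}\wedge\alpha+\beta,
\end{equation*}
with $\alpha$ a holomorphic $1$-form containing no $dz_0$ and $\beta$ a holomorphic $2$-form, the residue of $\omega$ along $D$ is $r:=\alpha|_{z_0=0}$. Since $d(dz_0/z_0)=0$, the condition $d\omega=0$ forces the polar part of $d\omega$ to vanish, so $r$ is a closed holomorphic $1$-form on $D$. The hypothesis that $\omega$ is log symplectic --- equivalently, that $\omega^\flat\colon\mathfrak{X}_X(\log D)\to\Omega_X(\log D)$ is an isomorphism, as in the previous Lemma --- means that $\omega^n$ generates $\Omega^{2n}_X(\log D)$ near $p$. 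Because $(dz_0/z_0\wedge\alpha)^2=0$, one has
\begin{equation*}
\omega^n=n\,\frac{dz_0}{z_0}\wedge\alpha\wedge\beta^{n-1}+\beta^n,
\end{equation*}
and comparing polar parts shows that $r$ does not vanish at $p$ and that $\beta$ restricts to a nondegenerate $2$-form on the $(2n-2)$-dimensional distribution $\ker r\subset TD$.

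Next I would produce coordinates adapted to these two pieces of data. Since $r$ is closed and $r(p)\neq0$, the holomorphic Poincar\'e lemma gives a coordinate $z_1$ on $D$ with $r=dz_1$; extend $z_1$ to a neighbourhood in $X$. On the slice $\{z_1=\mathrm{const}\}\cap D$ the restriction of $\beta$ is a holomorphic symplectic form, so the classical Darboux theorem furnishes coordinates $z_2,\dots,z_{2n-1}$ in which it becomes $dz_2\wedge dz_3+\dots+dz_{2n-2}\wedge dz_{2n-1}$. In the resulting coordinate system $\omega$ agrees along $D$ with the model
\begin{equation*}
\omega_0=\frac{dz_0}{z_0}\wedge dz_1+dz_2\wedge dz_3+\dots+dz_{2n-2}\wedge dz_{2n-1}.
\end{equation*}

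Finally I would run Moser's method in the logarithmic complex. Put $\omega_t=(1-t)\,\omega_0+t\,\omega$ for $t\in[0,1]$; each $\omega_t$ is a closed logarithmic $2$-form, and because $\omega$ and $\omega_0$ coincide along $D$ each $\omega_t$ stays log nondegenerate near $p$, so $\omega_t^\flat$ is an isomorphism of $\mathfrak{X}_X(\log D)$ onto $\Omega_X(\log D)$. Local exactness of the logarithmic de Rham complex along the reduced smooth divisor $D$ gives $\omega-\omega_0=d\sigma$ for a logarithmic $1$-form $\sigma$ vanishing along $D$. Solving $i_{v_t}\omega_t=-\sigma$ produces a logarithmic vector field $v_t\in\mathfrak{X}_X(\log D)$, necessarily tangent to $D$; its time-one flow is a biholomorphism fixing $D$ and pulling $\omega$ back to $\omega_0$, which yields the asserted log Darboux coordinates. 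The main obstacle is the middle step: one must extract from the cohomological nondegeneracy condition the pointwise facts that $r\neq0$ and that $\beta$ is transversally symplectic, and then match $\omega$ to $\omega_0$ closely enough along $D$ that the interpolation $\omega_t$ remains log nondegenerate. The odd complex dimension $2n-1$ of $D$ is precisely what leaves the residue $r$ as a leftover $1$-form rather than half of a symplectic pairing, so handling it correctly is the crux. An alternative route is to dualize $\omega$ to a holomorphic Poisson bivector whose top power vanishes transversally on $D$, invoke the holomorphic Weinstein splitting theorem, and reduce to the two-dimensional Poisson structure $z_0\,\partial_{z_0}\wedge\partial_{z_1}$.
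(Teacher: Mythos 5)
The paper offers no proof of this lemma at all: it is imported verbatim from Goto \cite{RG}, so there is no internal argument to compare yours against; what follows measures your proposal against what a complete proof requires. Your overall strategy --- residue decomposition, adapted coordinates, then a Moser deformation run inside the logarithmic de Rham complex --- is the standard route, and most individual steps are sound: the residue $r$ is indeed closed; the expansion $\omega^n=n\,\tfrac{dz_0}{z_0}\wedge\alpha\wedge\beta^{n-1}+\beta^n$ does give $r(p)\neq 0$ and nondegeneracy of $\beta$ on $\ker r$ (granted \emph{pointwise} nondegeneracy of $\omega$, which follows from Goto's actual definition, where $\omega^n$ is a nowhere-vanishing section of $\Omega^{2n}_X(\log D)$, rather than from the cohomological condition as transcribed in this paper --- you are right to flag that the transcription is too weak to use directly); and closed logarithmic $2$-forms are locally exact near a smooth point of $D$, since there the log complex computes $H^*$ of a punctured polydisc, whose $H^2$ vanishes.

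The genuine gap is the sentence ``in the resulting coordinate system $\omega$ agrees along $D$ with the model $\omega_0$.'' Choosing $z_1$ with $r=dz_1$ and Darboux coordinates for $\beta$ on the slices $\{z_1=c\}\cap D$ (which, incidentally, must be done with holomorphic dependence on the parameter $z_1$, i.e.\ by a relative Darboux theorem) controls only the residue and the slicewise restriction of $\beta$; the pullback of $\beta$ to $D$ may still contain terms $dz_1\wedge\theta$, so $\omega-\omega_0$ need not vanish along $D$ in the logarithmic sense, and then your convexity argument for the nondegeneracy of $\omega_t$ has no basis. Two standard repairs: (i) closedness of the pullback of $\beta$ to $D$ forces $\theta\equiv dg \pmod{dz_1}$ for some holomorphic $g$, and the change $z_0\mapsto z_0e^{-g}$ (allowed, since it multiplies the local equation of $D$ by a unit) absorbs $dz_1\wedge dg$ into the polar term $\tfrac{dz_0}{z_0}\wedge dz_1$, after which agreement along $D$ genuinely holds; or (ii) drop agreement along $D$ and match $\omega$ and $\omega_0$ only at the single point $p$, as elements of $\wedge^2$ of the log cotangent space at $p$ (note $dz_0\wedge dz_j=z_0\,\tfrac{dz_0}{z_0}\wedge dz_j$ vanishes there), which already keeps $\omega_t$ nondegenerate on a fixed neighbourhood of $p$ for all $t\in[0,1]$ by openness and compactness of $[0,1]$. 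Relatedly, the Poincar\'e lemma does not hand you a primitive $\sigma$ ``vanishing along $D$''; but you do not need that: normalizing $\sigma(p)=0$ by subtracting a constant-coefficient closed form suffices, since then $v_t=-(\omega_t^\flat)^{-1}\sigma$ is a logarithmic vector field vanishing at $p$, its flow exists up to time $1$ near $p$, fixes $p$, and preserves $D$ automatically.
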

In the follow Proposition, we prove that the logarithmic Poisson cohomology of logarithmic Poisson structure (\ref{E0}) is isomorphic to logarithmic De Rham cohomology of $(X,D)$.
\begin{proposition}
If $(X, D)$ is log symplectic manifold, the the  logarithmic Hamiltonian map of associated Poisson structure is an isomorphism.
\end{proposition}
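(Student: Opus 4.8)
The plan is to identify the logarithmic Hamiltonian map $\tilde H\colon\Omega_X(\log D)\to\mathfrak X_X(\log D)$ with the inverse (up to an overall sign) of the contraction map $\omega^\flat\colon\mathfrak X_X(\log D)\to\Omega_X(\log D)$, $\delta\mapsto i_\delta\omega$, of the preceding Lemma. Since $(X,D)$ is log symplectic, $\omega$ is non-degenerate and hence $\omega^\flat$ is an isomorphism of $\mathcal O_X$-modules; once $\tilde H$ is exhibited as a two-sided inverse of $\omega^\flat$, its bijectivity is immediate.

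Both $\omega^\flat$ and $\tilde H$ are $\mathcal O_X$-linear, so it suffices to check the composite $\omega^\flat\circ\tilde H$ on a generating set of $\Omega_X(\log D)$: the exact forms $df$ and the logarithmic generators $\frac{du}{u}$ with $u\in\cals$. With the convention $\{f,g\}=\omega(X_f,X_g)$ one computes $H(df)=-X_f$, so on exact forms $\tilde H(df)=H(df)=-X_f$ and therefore $\omega^\flat(\tilde H(df))=-i_{X_f}\omega=-df$. For a logarithmic generator, $\tilde H\bigl(\frac{du}{u}\bigr)=\frac1u H(du)=-\frac1u X_u$; although $\frac1u$ is not regular, the vector field $\frac1u X_u$ is a genuine element of $\mathfrak X_X(\log D)$, and, contraction being linear over meromorphic functions, $\omega^\flat\bigl(\frac1u X_u\bigr)=\frac1u\,i_{X_u}\omega=\frac1u\,du=\frac{du}{u}$ is again a logarithmic form. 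Hence $\omega^\flat\circ\tilde H=-\,\mathrm{id}$ on every generator, and by $\mathcal O_X$-linearity on all of $\Omega_X(\log D)$.

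The main obstacle is precisely the verification that $\frac1u X_u$ is a regular logarithmic vector field and that $i_{\frac1u X_u}\omega$ acquires no genuine pole; this is exactly where the log symplectic hypothesis on $\omega$ (equivalently, the logarithmic-principal character of the induced bracket, already established in the preceding Proposition) and the reducedness of $D$ enter. I would make it transparent using the log Darboux coordinates $(z_0,\dots,z_{2n-1})$ of the Darboux Lemma, in which $\omega=\frac{dz_0}{z_0}\wedge dz_1+dz_2\wedge dz_3+\cdots+dz_{2n-2}\wedge dz_{2n-1}$ and $\{z_0=0\}=D$: a direct contraction gives $X_{z_0}=-z_0\,\partial_{z_1}$, so $\frac1{z_0}X_{z_0}=-\partial_{z_1}$ is manifestly regular and $i_{\frac1{z_0}X_{z_0}}\omega=\frac{dz_0}{z_0}$, confirming the claim at each smooth point of $D$ and, together with the trivial computation on $\{z_0\neq0\}$, globally. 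With $\omega^\flat$ an isomorphism and $\omega^\flat\circ\tilde H=-\,\mathrm{id}$, the map $\tilde H=-(\omega^\flat)^{-1}$ is an isomorphism. Equivalently one may finish through Proposition~\ref{P1.10}: the identity $\mu(df,dg)=\{f,g\}=\omega\bigl((\omega^\flat)^{-1}df,(\omega^\flat)^{-1}dg\bigr)$ exhibits the induced $2$-form $\mu$ as the transport of the non-degenerate $\omega$ along the isomorphism $(\omega^\flat)^{-1}$, so $\mu$ is a logarithmic-Lie-Rinehart-symplectic structure and $\tilde H$ is an isomorphism by that proposition.
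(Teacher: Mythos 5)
Your argument is correct, and it reaches the conclusion by a genuinely different route than the paper. The paper's own proof is a direct matrix computation: in the log Darboux coordinates of the preceding lemma it writes the matrix $M_H$ of $H$ (entries $\pm z_0$ in the first $2\times2$ block, $\pm1$ elsewhere) and then the matrix $M_{\tilde{H}}$ of $\tilde{H}$ with respect to the logarithmic bases $\bigl(\frac{dz_0}{z_0},dz_1,\dots\bigr)$ of $\Omega_X(\log D)$ and $\bigl(z_0\partial_{z_0},\partial_{z_1},\dots\bigr)$ of $\mathfrak{X}_X(\log D)$, and simply observes that $M_{\tilde{H}}$ is the standard invertible skew matrix. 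You instead factor the statement through the earlier lemma that $\omega^\flat$ is an isomorphism (non-degeneracy of $\omega$), prove the composition identity $\omega^\flat\circ\tilde{H}=-\mathrm{id}$ on generators, and conclude $\tilde{H}=-(\omega^\flat)^{-1}$; log Darboux coordinates enter only to certify that $\frac{1}{u}X_u$ is a regular logarithmic vector field, a point you rightly note also follows from the logarithmic-principal property established in the preceding proposition. What the paper's computation buys is brevity and concreteness; what yours buys is an explicit, coordinate-free formula for the inverse, a clean separation of where the log symplectic hypothesis is used, coverage of the locus off $D$ (where the log Darboux lemma does not apply, a point the paper's purely local matrix argument leaves implicit), and a direct link to the chain of isomorphisms $H^*_P\cong H^*_{DS}\cong H^*_{PS}$ that the paper assembles afterwards. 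Two minor remarks: you announce a \emph{two-sided} inverse but verify only $\omega^\flat\circ\tilde{H}=-\mathrm{id}$; since $\omega^\flat$ is already known to be bijective this one-sided identity suffices, but it is worth saying so explicitly. And both proofs share the same limitation: log Darboux coordinates exist only near smooth points of the smooth part of $D$, so the verification at singular points of $D$ is, strictly speaking, absent from either argument.
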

\begin{proof}
Let $M_{\tilde{H}}$ (respectively $M_H$) the matrice of $\tilde{H}$ (respectively $H$). In the log Darboux coordinates, we have:

$$M_{H}=\left( 
\begin{array}{cccccccc}
0 & -z_0 & 0 & . & . & . & 0 & 0 \\ 
z_0 & 0 &  0 & 0 & . & . & . & 0 \\ 
0 & 0 & 0 & -1 & 0 & . & . &  .\\ 
. & . & 1 & 0 & 0 & 0 & . &  .\\ 
. & . & . & . & . & . & . & . \\ 
. & . & . & . & . & . & . & . \\ 
0 & 0 & 0 & . & 0 & . & 0 & -1\\ 
0 & 0 & 0 & . & . & . & 1 & 0
\end{array} \right) $$
and then
$$M_{\tilde{H}}=\left( 
\begin{array}{cccccccc}
0 & -1 & 0 & . & . & . & 0 & 0 \\ 
1 & 0 &  0 & 0 & . & . & . & 0 \\ 
0 & 0 & 0 & -1 & 0 & . & . &  .\\ 
. & . & 1 & 0 & . & 0 & . &  .\\ 
. & . & . & . & . & . & . & . \\ 
. & . & . & . & . & . & . & . \\ 
0 & 0 & 0 & . & 0 & . & 0 & -1\\ 
0 & 0 & 0 & . & . & . & 1 & 0
\end{array} \right) $$
$M_{\tilde{H}}$ is  obviously inversible matrice. This end the prove.
\end{proof}
\section{Computation of some logarithmic Poisson cohomology.}

In this section, we compute both Poisson cohomology an logarithmic
Poisson cohomology of the following  logarithmic principal Poisson
algebra.
\begin{enumerate}
\item[i-] $(\cala:=\mathbb{C}[x,y], \{x,y\}=x).$
\item[ii-] $(\cala:=\mathbb{C}[x,y], \{x,y\}=x^2).$
\item[iii-] $(\cala:=\mathbb{C}[x,y,z], \{x,y\}=0, \{x,z\}=0, \{y,z\}=xyz).$
\end{enumerate}
We prove that the first one is a logsymplectic Poisson structure; what
implies according to Proposition\ref{P1.10} that Poisson cohomology
and logarithmic Poisson cohomology are equals for this structure. We also prove that
the second Poisson structure is not logsymplectic but we still have the 
equality between two cohomologies; therefore, being logsymplectic
is not necessary condition to have equality between Poisson and
logarithmic Poisson cohomologies. At the end, we compute the $3^{rd}$ groups of  Poisson and logarithmic Poisson cohomology of the third
Poisson structure. We show that in this case, these spaces are
different.
\subsection{Example 1: $(\cala:=\mathbb{C}[x,y], \{x,y\}=x).$}

Let us defined on  $\cala=\mathbb{C}[x,y] $ the following Poisson bracket
\begin{equation}\label{E1}
 (f,g)\mapsto\{f,g\}=x(\dfrac{\partial f}{\partial x}\dfrac{\partial g}{\partial y}-\dfrac{\partial f}{\partial y}\dfrac{\partial g}{\partial x})
\end{equation}
For all  $f\in\cala,$ the derivation
$D_f:=x(\dfrac{\partial f}{\partial x}\dfrac{\partial }{\partial y}-\dfrac{\partial f}{\partial y}\dfrac{\partial }{\partial x})$
satisfy the relation
$ D_f(x\cala)\subset x\cala.$
 Which means that the bracket $\{-,-\}$ defined by (\ref{E1}) is logarithmic principal Poisson bracket along the ideal $x\cala.$ The associated Hamiltonian map  $H:\Om\rightarrow\Der$ is defined  on generators of $\Omega_\cala$ by:

$H(dx)=D_x=x\dfrac{\partial }{\partial y}$ and
$H(dy)=D_y=-x\dfrac{\partial}{\partial x}$
\\
From these relations, we deduce the definition of associated
logarithmic Hamiltonian map
$\tilde{H}$ on generators of $\Omega_\cala(\log \cali).$\\
$\tilde{H}(\dfrac{dx}{x})=\dfrac{1}{x}H(dx)$  and
 $\tilde{H}(dy) = H(dy)$
\\
In this particular case, we have the following description of
$\Omega_\cala(\log \cali).$
  \begin{lemma}\label{L1.1}
  \begin{equation}\label{E9}
\Oml\cong \cala\dfrac{dx}{x}\oplus\cala dy\cong\mathbb{C}[y]\dfrac{dx}{x}\oplus\Om.
\end{equation}
  \end{lemma}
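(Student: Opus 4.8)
The plan is to make the formal symbol $\frac{dx}{x}$ precise by realizing $\Omega_\cala(\log\cali)$ as an $\cala$-submodule of the localized module of differentials, and then to read off both isomorphisms by elementary polynomial bookkeeping. Concretely, write $\cala_x=\mathbb{C}[x,x^{-1},y]$ for the localization of $\cala=\mathbb{C}[x,y]$ at the powers of $x$, which is legitimate since $x\cala$ is prime and hence $x$ is a non--zero--divisor. Localization commutes with Kähler differentials, so $\Omega_{\cala_x}\cong\cala_x\otimes_\cala\Omega_\cala=\cala_x\,dx\oplus\cala_x\,dy$ is free of rank $2$ over $\cala_x$, and the canonical map $\Omega_\cala\to\Omega_{\cala_x}$ is injective because $\cala\hookrightarrow\cala_x$. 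Inside $\Omega_{\cala_x}$ the element $\frac{dx}{x}$ is literally $x^{-1}\,dx$, and $\Omega_\cala(\log\cali)$ is the $\cala$-submodule generated by $\Omega_\cala$ together with $x^{-1}dx$. Since $dx=x\cdot(x^{-1}dx)$, this submodule is already generated over $\cala$ by the two elements $\frac{dx}{x}$ and $dy$.

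For the first isomorphism I would consider the $\cala$-linear map $\varphi:\cala^{2}\to\Omega_{\cala_x}$ sending $(a,b)\mapsto a\,x^{-1}dx+b\,dy$; its image is exactly $\Omega_\cala(\log\cali)$. If $\varphi(a,b)=0$ then $ax^{-1}=0$ and $b=0$ in $\cala_x$; as $\cala_x$ is an integral domain in which $x$ is a unit, this forces $a=b=0$. Hence $\varphi$ is injective and yields $\Omega_\cala(\log\cali)\cong\cala\frac{dx}{x}\oplus\cala\,dy$, the first claimed decomposition.

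For the second isomorphism I would split off the part of $\cala\frac{dx}{x}$ that genuinely has a pole. Every $f\in\cala$ has a unique expression $f=f(0,y)+x\,g$ with $f(0,y)\in\mathbb{C}[y]$ and $g\in\cala$, whence $f\frac{dx}{x}=f(0,y)\frac{dx}{x}+g\,dx$. Thus $\cala\frac{dx}{x}=\mathbb{C}[y]\frac{dx}{x}+\cala\,dx$, and this sum is direct: a relation $h(y)x^{-1}dx+g\,dx=0$ with $h\in\mathbb{C}[y]$ gives $h=-xg$ in $\cala$, and evaluating at $x=0$ forces $h=0$ and then $g=0$. Combining with the free summand $\cala\,dy$ and using $\Omega_\cala=\cala\,dx\oplus\cala\,dy$ then gives $\Omega_\cala(\log\cali)\cong\mathbb{C}[y]\frac{dx}{x}\oplus\cala\,dx\oplus\cala\,dy=\mathbb{C}[y]\frac{dx}{x}\oplus\Omega_\cala$.

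The one genuine point requiring care, and the part I expect to be the main obstacle, is the meaning of the formal generator $\frac{dx}{x}$ and the absence of hidden relations among $\frac{dx}{x}$ and $dy$; both are disposed of at once by the embedding into $\Omega_{\cala_x}$, where $x$ is invertible and the domain property turns the statement ``no relations'' into the triviality $ax^{-1}=0\Rightarrow a=0$. Once that step is in place, the two decompositions are purely formal manipulations, amounting to splitting a coefficient $f(x,y)$ into its value at $x=0$ and its $x$-divisible remainder.
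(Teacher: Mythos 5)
Your proof is correct. There is, however, nothing in the paper to compare it against: the paper states this lemma bare, with no proof at all, and immediately proceeds to use it to write every $\alpha\in\Oml$ as $a\frac{dx}{x}+b\,dy$. Your argument therefore supplies exactly the justification the paper omits, and it addresses the one genuine issue, namely that the paper's definition of $\Oml$ as the $\cala$-module ``generated by'' $\frac{dx}{x}$ together with $\Om$ never specifies an ambient module in which that generation takes place. Realizing $\Oml$ as the $\cala$-submodule of $\Omega_{\cala_x}\cong\cala_x\,dx\oplus\cala_x\,dy$ generated by $x^{-1}dx$ and $dy$, and using that $\cala_x$ is a domain in which $x$ is a unit, is the right way to rule out hidden relations; the splitting $f=f(0,y)+xg$ then yields the second decomposition formally, as you say. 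Two small points of hygiene. First, injectivity of $\Om\to\Omega_{\cala_x}$ does not follow for a general module from $\cala\hookrightarrow\cala_x$ alone (localization can kill torsion, e.g.\ it kills $\cala/x\cala$); it holds here because $\Om=\cala\,dx\oplus\cala\,dy$ is free. Second, ``$x\cala$ prime hence $x$ is a non-zero-divisor'' is not a valid implication in an arbitrary ring (consider $x$ in $\mathbb{C}[x,z]/(xz)$, where $(x)$ is prime but $x$ is a zero divisor); what you actually need is simply that $\cala=\mathbb{C}[x,y]$ is a domain. Neither remark affects the correctness of your proof in this setting.
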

  It follows from this lemma that for all $\alpha\in\Oml,$ there is  $a, b\in\cala$ such that
  $\alpha=a\dfrac{dx}{x}+bdy.$
It follows that $\tilde{H}$ is completely defined by the relation
\begin{equation}\label{E10}
\tilde{H}(a\dfrac{dx}{x}+bdy)=-bx\dfrac{\partial }{\partial x}+a\dfrac{\partial }{\partial y}\in Der(\log x\cala)
\end{equation}
In other hand, we have:
\begin{equation}\label{E11}
\begin{array}{cccc}
 & [\alpha^0_1\dfrac{dx}{x}+\alpha^1_1 dy,\alpha^0_2\dfrac{dx}{x}+\alpha^1_2 dy ]_s:= &  \\
 & \left(\dfrac{\alpha^0_1}{x}\{x,\alpha^0_2\}+\dfrac{\alpha^0_2}{x}\{\alpha^0_1, x\}+\alpha^1_2\{\alpha^0_1,y\}+\alpha^1_1\{y,\alpha^0_2\}\right)\dfrac{dx}{x} + & \\
 &\left( \dfrac{\alpha^0_1}{x}\{x,\alpha^1_2\}+\dfrac{\alpha^0_2}{x}\{\alpha^1_1,x\}+\alpha^1_1\{y,\alpha^1_2\}+\alpha^1_2\{\alpha^1_1,y\}\right)dy &
\end{array}
\end{equation}
\begin{lemma}\label{L1.2}
$[-,-]_s$ is  a Lie bracket on $\Oml.$
\end{lemma}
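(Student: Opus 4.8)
The plan is to avoid a direct assault on the Jacobi identity for the three-term expression \eqref{E11} and instead transport the question into the Lie algebra of vector fields, where skew-symmetry and Jacobi are automatic. The key observation is that, by \eqref{E10}, the extended Hamiltonian map
\[
\tilde{H}\Big(a\tfrac{dx}{x}+b\,dy\Big)=-bx\tfrac{\partial}{\partial x}+a\tfrac{\partial}{\partial y}
\]
is \emph{injective}: if the right-hand side vanishes then $a=0$ and $bx=0$, whence $b=0$ since $\cala=\mathbb{C}[x,y]$ is an integral domain. (In fact, by Proposition 1.6 together with Lemma \ref{L1.1}, $\tilde{H}$ is an isomorphism of $\cala$-modules onto $\widehat{Der_\cala(\log x\cala)}$.) Because the ordinary commutator $[X,Y]=XY-YX$ of derivations is a genuine Lie bracket, it suffices to prove the intertwining identity
\[
\tilde{H}\big([\alpha,\beta]_s\big)=\big[\tilde{H}(\alpha),\tilde{H}(\beta)\big]
\qquad\text{for all }\alpha,\beta\in\Oml .
\]

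Once this identity is in hand, the conclusion is purely formal. Applying $\tilde{H}$ to $[\alpha,\alpha]_s$ gives $[\tilde{H}\alpha,\tilde{H}\alpha]=0$, so $[\alpha,\alpha]_s\in\ker\tilde{H}=\{0\}$ and the bracket is alternating; applying $\tilde{H}$ to the Jacobiator of $\alpha,\beta,\gamma$ yields the Jacobiator of $\tilde{H}\alpha,\tilde{H}\beta,\tilde{H}\gamma$ for the commutator, which vanishes, so injectivity of $\tilde{H}$ forces the Jacobiator of $[-,-]_s$ to vanish as well. Bilinearity over $\mathbb{C}$ is immediate from \eqref{E11}. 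This argument is exactly the specialization of Theorem 1.10 to the present example, but carried out concretely.

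The only computation that really has to be done is the intertwining identity, and this is where the bookkeeping lives. Writing $\alpha=a\tfrac{dx}{x}+b\,dy$ and $\beta=c\tfrac{dx}{x}+e\,dy$, I would first record the elementary brackets coming from \eqref{E1}, namely $\{x,g\}=xg_y$, $\{f,x\}=-xf_y$, $\{f,y\}=xf_x$ and $\{y,g\}=-xg_x$. Substituting these into \eqref{E11} collapses the coefficient of $\tfrac{dx}{x}$ to $(ac_y-ca_y)+x(ea_x-bc_x)$ and the coefficient of $dy$ to $(ae_y-cb_y)+x(eb_x-be_x)$; in particular the $1/x$ denominators are absorbed, confirming that $[\alpha,\beta]_s$ genuinely lies in $\Oml$. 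On the other side, computing the commutator of the planar vector fields $\tilde{H}\alpha=-bx\tfrac{\partial}{\partial x}+a\tfrac{\partial}{\partial y}$ and $\tilde{H}\beta=-ex\tfrac{\partial}{\partial x}+c\tfrac{\partial}{\partial y}$ produces $\partial_x$- and $\partial_y$-components that match, term by term, the result of applying $\tilde{H}$ to the simplified $[\alpha,\beta]_s$. The main obstacle is thus organizational rather than conceptual: keeping the signs and the $x$- versus $x^2$-weights aligned while the $1/x$ terms cancel. Since that cancellation is precisely what the logarithmic-principal condition guarantees, I expect the two sides to agree exactly.
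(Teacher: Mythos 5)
Your proof is correct, but it follows a genuinely different route from the paper's. The paper proceeds by extension theory: using Lemma \ref{L1.1} it identifies $\Oml$ with $\Om\oplus\mathbb{C}[y]\frac{dx}{x}$, equips $\Om$ with the Lie--Poisson bracket $[dx,dy]=dx$, invokes the criterion of \cite{DA} according to which the formula (\ref{E14}) is a Lie bracket on this direct sum as soon as $\Om$ is a Lie ideal, and then checks, via the Jacobi identity of $\{-,-\}$, that (\ref{E14}) coincides with the defining formula of $[-,-]_s$. You instead transport the problem into $\Derl$: you observe that $\tilde{H}$ is injective (legitimate: by Lemma \ref{L1.1} every element of $\Oml$ is $a\frac{dx}{x}+b\,dy$, and $-bx\partial_x+a\partial_y=0$ forces $a=bx=0$, hence $b=0$ since $\cala$ is a domain) and that $\tilde{H}[\alpha,\beta]_s=[\tilde{H}\alpha,\tilde{H}\beta]$, after which alternation and the Jacobi identity pull back formally through the injection from the commutator of derivations. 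Your computation is right: the bracket collapses to $\bigl((ac_y-ca_y)+x(ea_x-bc_x)\bigr)\frac{dx}{x}+\bigl((ae_y-cb_y)+x(eb_x-be_x)\bigr)dy$, and applying $\tilde{H}$ to this gives exactly the commutator $[-bx\partial_x+a\partial_y,\,-ex\partial_x+c\partial_y]$. What your route buys: it is self-contained (no appeal to the extension result of \cite{DA}), and your intertwining identity is precisely Lemma \ref{L1.4}, so a single computation delivers Lemma \ref{L1.2} and Lemma \ref{L1.4} together, after which Proposition \ref{P1.5} needs only the Leibniz rule of Lemma \ref{L1.3}. What it costs: everything hinges on injectivity of $\tilde{H}$, which holds here because this example is log-symplectic, but fails for general logarithmic principal Poisson structures --- in Example 3, for instance, $x$ is a Casimir, so $\frac{dx}{x}$ lies in $\ker\tilde{H}$ --- so your argument, unlike the paper's extension-theoretic one, cannot be promoted to a proof of the general Theorem 1.10; it is tied to the nondegeneracy of this particular structure.
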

\begin{proof}
It follows from the relation lemma \ref{L1.1} that it suffices to show that this bracket is a Lie one on
 $\mathbb{C}[y]\dfrac{dx}{x}\oplus\Om.$\\
Since the following
\begin{equation}\label{E12}
[dx,dy]:=dx
\end{equation}
define a Lie bracket on $\Om$, then we need to put on $\mathbb{C}[y]\dfrac{dx}{x}$ a Lie bracket such that the following
\begin{equation}\label{E13}
\xymatrix{0\ar[r]&\Om\ar[r]&\Om\oplus\mathbb{C}[y]\dfrac{dx}{x}\ar[r]&\mathbb{C}[y]\dfrac{dx}{x}\ar[r]&0}
\end{equation}
becomes a split short sequence of Lie algebras. According to \cite{DA},
\begin{equation}\label{E14}
[\gamma_1+\beta_1,\gamma_2+\beta_2]=[\gamma_1,\gamma_2]+[\beta_1,\gamma_2]-[\beta_2,\gamma_1]+[\beta_1,\beta_2]
\end{equation}
where  $\gamma_i+\beta_i\in\Om\oplus\mathbb{C}[y]\dfrac{dx}{x}$ for  $i=1; 2.$\\
 is Lie bracket on  $\Om\oplus\mathbb{C}[y]\dfrac{dx}{x};$ if $\Om$ is Lie ideal of $\Om\oplus\mathbb{C}[y]\dfrac{dx}{x}.$ Therefore, it is sufficient
to prove that the bracket  (\ref{E14}) and  (\ref{E11}) are equal. By a simple application of the Jacobi identity $\{-,-\}$ we have the result.
\end{proof}
\begin{lemma}\label{L1.3}
For all  $\alpha=\alpha^0_1\dfrac{dx}{x}+\alpha^1_1dy, \beta=\beta^0_1\dfrac{dx}{x}+\beta^1_1dy \in\Oml$ and  $a\in\cala,$ we have
\begin{equation}\label{E20}
[\alpha,a\beta]=\tilde{H}(\alpha)(a)\beta+ a[\alpha,\beta]
\end{equation}
\end{lemma}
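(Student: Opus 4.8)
The identity (\ref{E20}) is exactly the second compatibility axiom of a Lie--Rinehart structure, namely $[\alpha, a\beta] = \tilde{H}(\alpha)(a)\beta + a[\alpha,\beta]$, for the bracket $[-,-]_s$ of (\ref{E11}) together with the action $\tilde{H}$ of (\ref{E10}). Since both the bracket and the action are given by completely explicit formulas, the plan is to verify (\ref{E20}) by direct expansion, isolating the single source of non-$\cala$-linearity: the derivation (Leibniz) property of the underlying Poisson bracket $\{-,-\}$ of (\ref{E1}).

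First I would write $a\beta = (a\beta^0_1)\frac{dx}{x} + (a\beta^1_1)\,dy$ and substitute into (\ref{E11}), taking $\alpha$ as the first argument and $a\beta$ as the second (so the second-argument coefficients become $a\beta^0_1$ and $a\beta^1_1$). In each resulting Poisson bracket of the form $\{-,\,a\beta^j_1\}$ I would apply $\{f,ag\} = a\{f,g\} + \{f,a\}g$. This splits every component of $[\alpha,a\beta]_s$ into two groups: the terms carrying a global factor $a$, and the residual terms carrying a factor $\{x,a\}$ or $\{y,a\}$. The first group reassembles, component by component, into precisely $a$ times the corresponding component of $[\alpha,\beta]_s$ as read off from (\ref{E11}), because no derivative of $a$ is involved and all other coefficients are unchanged; this part is immediate. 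The substance of the computation is showing that the residual terms assemble into $\tilde{H}(\alpha)(a)\beta$.

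For the residual terms I would use that (\ref{E1}) gives $\{x,a\} = x\,\partial a/\partial y$ and $\{y,a\} = -x\,\partial a/\partial x$. In the $\frac{dx}{x}$ component the residue is $\frac{\alpha^0_1\beta^0_1}{x}\{x,a\} + \alpha^1_1\beta^0_1\{y,a\}$; after substituting these two values the factor $\frac{1}{x}$ cancels the $x$ coming from $\{x,a\}$, and one is left with $\beta^0_1\bigl(\alpha^0_1\,\partial a/\partial y - \alpha^1_1 x\,\partial a/\partial x\bigr)$, which by (\ref{E10}) is exactly $\beta^0_1\,\tilde{H}(\alpha)(a)$, i.e.\ the $\frac{dx}{x}$ coefficient of $\tilde{H}(\alpha)(a)\beta$. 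The same manipulation on the $dy$ component produces $\beta^1_1\,\tilde{H}(\alpha)(a)$, the $dy$ coefficient of $\tilde{H}(\alpha)(a)\beta$. Combining the two components finishes the proof. The main obstacle is purely bookkeeping: one must keep careful track of the factors of $1/x$ in the $\frac{dx}{x}$ component and confirm that each derivative-of-$a$ term, after the Leibniz split, lands in the correct component with the correct coefficient. The structural reason the argument closes is that the same expression $\alpha^0_1\,\partial a/\partial y - \alpha^1_1 x\,\partial a/\partial x = \tilde{H}(\alpha)(a)$ factors out of both components, multiplied by $\beta^0_1$ and $\beta^1_1$ respectively, which is precisely what is required for the residual contribution to equal $\tilde{H}(\alpha)(a)\beta$.
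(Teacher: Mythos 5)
Your proof is correct. Expanding $[\alpha,a\beta]_s$ via the explicit bracket formula, splitting each bracket of the form $\{-,a\beta^j_1\}$ with the Leibniz rule, and substituting $\{x,a\}=x\,\partial a/\partial y$, $\{y,a\}=-x\,\partial a/\partial x$ from (\ref{E1}) does make the $a$-carrying terms reassemble into $a[\alpha,\beta]_s$ (the remaining two terms in each component carry $a$ outside any bracket, so no Leibniz split is needed there), while the residual terms factor as $\beta^0_1\,\tilde{H}(\alpha)(a)$ and $\beta^1_1\,\tilde{H}(\alpha)(a)$ in the $\frac{dx}{x}$ and $dy$ components respectively, which is exactly (\ref{E20}). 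By contrast, the paper's own proof is a single sentence asserting that the lemma is ``a simple application of Jacobi identity of $\{-,-\}$,'' with no computation shown. Your write-up is therefore the filled-in version of what the paper omits, and it also corrects a misattribution: the property of $\{-,-\}$ that actually drives (\ref{E20}) is its bi-derivation (Leibniz) property, not the Jacobi identity --- Jacobi is what is needed for Lemma~\ref{L1.2} (that $[-,-]_s$ satisfies the Jacobi identity) and for Lemma~\ref{L1.4} (that $\tilde{H}$ is a morphism of Lie algebras), but plays no role in the present $\cala$-linearity statement. The one structural point your argument isolates, which the paper never makes explicit, is why the residuals close up: the factor $1/x$ attached to the $\frac{dx}{x}$ component cancels the overall factor $x$ in $\{x,a\}$, so the single expression $\tilde{H}(\alpha)(a)=\alpha^0_1\,\partial a/\partial y-\alpha^1_1 x\,\partial a/\partial x$ factors out of both components, multiplied by $\beta^0_1$ and $\beta^1_1$ as required.
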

\begin{proof} It is a simple application of Jacobi identity of $\{-,-\}$
\end{proof}
\begin{lemma}\label{L1.4}
$\tilde{H}:\Oml\longrightarrow \Derl$ is Lie algebra homomorphism.
\end{lemma}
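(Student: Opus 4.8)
The plan is to verify the defining identity of a Lie algebra homomorphism, namely
\[
\tilde{H}([\alpha,\beta]_s)=[\tilde{H}(\alpha),\tilde{H}(\beta)]
\qquad\text{for all }\alpha,\beta\in\Oml,
\]
where the bracket on the right is the commutator of derivations in $\Derl$. Since both sides are $R$-bilinear and skew-symmetric in $(\alpha,\beta)$, and since by Lemma~\ref{L1.1} the module $\Oml$ is generated over $\cala$ by $\dfrac{dx}{x}$ and $dy$, the first step is to reduce the verification to these two generators.

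For the reduction I would exploit the two available Leibniz rules. On the source side, Lemma~\ref{L1.3} gives $[\alpha,a\beta]_s=\tilde{H}(\alpha)(a)\beta+a[\alpha,\beta]_s$; on the target side, the commutator of derivations satisfies $[\delta,a\delta']=\delta(a)\delta'+a[\delta,\delta']$. Combining these with the $\cala$-linearity of $\tilde{H}$ (it is the module map of~\ref{E10}), one checks that if the homomorphism identity holds for a pair $(\alpha,\beta)$, then it holds for $(\alpha,a\beta)$: indeed $\tilde{H}([\alpha,a\beta]_s)=\tilde{H}(\alpha)(a)\,\tilde{H}(\beta)+a\,\tilde{H}([\alpha,\beta]_s)$, while $[\tilde{H}(\alpha),a\,\tilde{H}(\beta)]=\tilde{H}(\alpha)(a)\,\tilde{H}(\beta)+a\,[\tilde{H}(\alpha),\tilde{H}(\beta)]$, and the two expressions agree under the assumed identity. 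By skew-symmetry the same closure holds in the first slot, so it suffices to test the identity on the pair $\left(\dfrac{dx}{x},dy\right)$.

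Finally I would evaluate both sides on this pair. From~\ref{E10} one reads off $\tilde{H}\!\left(\dfrac{dx}{x}\right)=\dfrac{\partial}{\partial y}$ and $\tilde{H}(dy)=-x\dfrac{\partial}{\partial x}$, whose commutator vanishes because $\dfrac{\partial x}{\partial y}=0$. On the other hand, substituting $\alpha^0_1=1,\ \alpha^1_1=0,\ \alpha^0_2=0,\ \alpha^1_2=1$ into the bracket formula~\ref{E11}, every resulting term carries a Poisson bracket one of whose arguments is a constant (or $0$) and hence vanishes, so $\left[\dfrac{dx}{x},dy\right]_s=0$ and therefore $\tilde{H}\!\left(\left[\dfrac{dx}{x},dy\right]_s\right)=0$ too. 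Both sides thus agree on the generators, which by the reduction completes the proof.

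I do not anticipate a serious obstacle: the real content is the bootstrapping from generators through the two Leibniz rules, and the only points demanding care are to confirm that $\tilde{H}$ is genuinely $\cala$-linear (not merely $R$-linear), so that the reduction is legitimate, and that the $s$-bracket of the two generators indeed collapses term-by-term in~\ref{E11}.
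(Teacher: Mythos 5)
Your proof is correct, and it supplies an actual argument where the paper itself offers only the two words ``Direct calculation'' --- by which the author presumably means: expand $[\alpha,\beta]_s$ for arbitrary $\alpha=\alpha^0_1\frac{dx}{x}+\alpha^1_1dy$, $\beta=\alpha^0_2\frac{dx}{x}+\alpha^1_2dy$ using the example's explicit bracket formula, apply $\tilde{H}(a\frac{dx}{x}+b\,dy)=-bx\partial_x+a\partial_y$, and match the coefficients of $\partial_x$ and $\partial_y$ against the commutator $[\tilde{H}(\alpha),\tilde{H}(\beta)]$ computed from $\{f,g\}=x(\partial_xf\,\partial_yg-\partial_yf\,\partial_xg)$. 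Your route is organized differently: instead of that four-term expansion you use $R$-bilinearity and skew-symmetry of both sides, close the homomorphism identity under $\cala$-multiplication in each slot by playing Lemma \ref{L1.3} against the derivation identity $[\delta,a\delta']=\delta(a)\delta'+a[\delta,\delta']$ and the $\cala$-linearity of $\tilde{H}$ (legitimate, since by Lemma \ref{L1.1} the decomposition $a\frac{dx}{x}+b\,dy$ is unique, so $\tilde{H}$ is a well-defined $\cala$-module map), and reduce everything to the single pair $\bigl(\frac{dx}{x},dy\bigr)$, where both sides vanish: $[\partial_y,-x\partial_x]=0$, and $[\frac{dx}{x},dy]_s=0$ because every term involves a bracket with a constant argument --- consistent with the general definition of $[-,-]_s$, whose only extra term here is $d(\frac{1}{x}\{x,y\})=d(1)=0$. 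What the brute-force expansion buys is self-containedness; what your bootstrap buys is that the verification shrinks to one trivial pair (and uses the same generator-plus-Leibniz mechanism the paper already employs for Lemmas \ref{L1.2} and \ref{L1.3}), and it transfers verbatim to any logarithmic principal Poisson algebra whose module $\Oml$ is freely generated by finitely many logarithmic differentials, the only example-specific input being the brackets of generator pairs.
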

\begin{proof} Direct calculation.
\end{proof}
we deduce the following Proposition
\begin{proposition}\label{P1.5}
$(\Oml,[-,-],\tilde{H})$ is a Lie-Rinehart algebra
\end{proposition}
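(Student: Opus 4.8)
The plan is to assemble Proposition~\ref{P1.5} directly from the three lemmas just proved, since a Lie--Rinehart structure on $\Oml$ over $\cala$ is by definition the package consisting of an $R$-Lie algebra structure on $\Oml$, an $\cala$-module structure (already available since $\Oml$ is an $\cala$-module by construction), and an action $\tilde H:\Oml\to\Der$ satisfying the two Rinehart compatibility axioms recorded in Subsection~1.2. First I would invoke Lemma~\ref{L1.2} to supply the $R$-bilinear, skew-symmetric bracket $[-,-]_s$ together with its Jacobi identity, so that $(\Oml,[-,-]_s)$ is an $R$-Lie algebra. Next I would invoke Lemma~\ref{L1.4}, which states that $\tilde H:\Oml\to\Derl\subset\Der$ is a Lie algebra homomorphism; this is precisely the requirement that $\tilde H$ be an action of the $R$-Lie algebra $\Oml$ on $\cala$ by derivations, i.e. $\tilde H([\alpha,\beta]_s)=[\tilde H(\alpha),\tilde H(\beta)]$.

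It remains to verify the two compatibility properties from the definition of Lie--Rinehart algebra, namely
\begin{enumerate}
\item $\tilde H(a\alpha)(b)=a\bigl(\tilde H(\alpha)(b)\bigr)$ for all $a,b\in\cala$, $\alpha\in\Oml$;
\item $[\alpha,a\beta]_s=\tilde H(\alpha)(a)\,\beta+a[\alpha,\beta]_s$ for all $a\in\cala$, $\alpha,\beta\in\Oml$.
\end{enumerate}
The second of these is exactly the content of Lemma~\ref{L1.3}, so I would cite that lemma verbatim. For the first property I would argue directly from the explicit formula~(\ref{E10}) for $\tilde H$: writing $\alpha=\alpha^0\frac{dx}{x}+\alpha^1 dy$, the map $\tilde H(\alpha)=-\alpha^1 x\frac{\partial}{\partial x}+\alpha^0\frac{\partial}{\partial y}$ is plainly $\cala$-linear in the coefficients $\alpha^0,\alpha^1$, so replacing $\alpha$ by $a\alpha$ multiplies the resulting derivation by $a$ on the left; since multiplication by $a$ commutes with evaluation at $b$ in the commutative algebra $\cala$, property~1 follows by a one-line computation.

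The main point to be careful about is not the verification of any single axiom---each is either an already-established lemma or an immediate consequence of the $\cala$-linearity visible in~(\ref{E10})---but rather confirming that the pieces are mutually the right objects: that the bracket of Lemma~\ref{L1.2} and the action of Lemma~\ref{L1.4} are the \emph{same} $[-,-]_s$ and $\tilde H$ appearing in the compatibility condition, and that the codomain $\Derl$ of $\tilde H$ sits inside $\Der$ so that $\tilde H$ genuinely defines a Lie--Rinehart action in the sense of Subsection~1.2. Once this bookkeeping is noted, the proposition is simply the statement that $[-,-]_s$, the ambient $\cala$-module structure, and $\tilde H$ together satisfy all the defining axioms, and the proof reduces to the sentence ``Combine Lemmas~\ref{L1.2}, \ref{L1.3}, and~\ref{L1.4} with the evident $\cala$-linearity of $\tilde H$ in~(\ref{E10}).'' I expect no genuine obstacle here; this proposition is a formal consequence collecting the preceding lemmas.
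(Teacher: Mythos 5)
Your proposal is correct and follows essentially the same route as the paper: the paper offers no separate argument for Proposition~\ref{P1.5} beyond the words ``we deduce,'' i.e.\ it assembles Lemmas~\ref{L1.2}, \ref{L1.3} and~\ref{L1.4} exactly as you do. Your only addition is the explicit check of the first compatibility axiom ($\cala$-linearity of $\tilde H$ via formula~(\ref{E10})), which the paper leaves implicit; this is a sound and worthwhile piece of bookkeeping, not a deviation.
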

In what follows, we will give explicitly description of associated
logarithmic Poisson complex. From above description, we can identify
in this particular case $Alt^2(\Omega_\cala(\log\cali),\cala)$ with
$\cala^i:=\underset{ i}{\underbrace{\cala\times...\times\cala}}$
\[\xymatrix{0\ar[r]&\cala\ar[r]^{d^0_{\tilde{H}}}&\cala\times\cala\ar[r]^{d^1_{\tilde{H}}}&\cala\ar[r]&0}\]
Where
$ d^0_{\tilde{H}}(f)=(\partial_yf,-x\partial_xf)$ and
                $d^1_{\tilde{H}}(f_1,f_2)= \partial_yf_2+x\partial_xf_1.$\\
We verify that $d^1_{\tilde{H}}(d^0_{\tilde{H}}f)=x(\partial^2_{xy}f-\partial^2_{xy}f)=0$
\begin{proposition}
 The associated Poisson 2-form of $\{x, y\}=x$ is $\mu=x\partial_x\wedge\partial_y$ which is log symplectic structure.
\end{proposition}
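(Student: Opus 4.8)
The plan is to produce an explicit logarithmic $2$-form $\omega$ whose inverse bivector is $\mu=x\partial_x\wedge\partial_y$, and then to verify against the definition of a log symplectic manifold that $\omega$ is closed and has nonvanishing top power. First I would record that the bracket (\ref{E1}) is encoded by the bivector $\mu=x\partial_x\wedge\partial_y$, meaning $\{f,g\}=\mu(df,dg)$; this is immediate from (\ref{E1}). Taking $D=\{x=0\}$, a reduced (square free) divisor since $x$ is simple, the candidate symplectic form is the pointwise inverse of $\mu$, namely $\omega=\dfrac{dx}{x}\wedge dy$.

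Next I would check that $\omega$ satisfies the two requirements of the definition. That $\omega\in\Omega^2_X(\log D)$ follows because $\dfrac{dx}{x}$ is by construction a generator of $\Oml$ (equivalently $x\omega=dx\wedge dy$ and $x\,d\omega=0$ are holomorphic). Closedness $d\omega=0$ is automatic, $\omega$ being a top-degree form on the $2$-dimensional $X$ (here $n=1$). For the top power: when $n=1$ the condition $\omega^{\wedge n}\neq 0$ reduces to $\omega\neq 0$, and $\omega=\dfrac{1}{x}\,dx\wedge dy$ is a unit multiple of $\dfrac{dx\wedge dy}{x}$, hence a nowhere-vanishing section of $\Omega^2_X(\log D)$, nonzero in $H^{2}(X,\Omega^{*}([D]))$.

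Finally I would confirm that $\omega$ induces exactly the bracket $\{x,y\}=x$, which is what ties $\mu$ to a genuine log symplectic form. Solving $i_{X_f}\omega=df$ gives $X_x=-x\partial_y$ and $X_y=x\partial_x$, and then a direct computation yields $\omega(X_x,X_y)=x$, in agreement with (\ref{E1}). Equivalently, $\omega^\flat$ and $\mu^\sharp$ are mutually inverse isomorphisms between $\Oml$ and $\widehat{Der_\cala(\log\cali)}$, which is precisely the statement that $\tilde{H}$ is an isomorphism (already visible from $M_{\tilde H}=\left(\begin{smallmatrix}0&-1\\ 1&0\end{smallmatrix}\right)$). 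I would also note that $\omega=\dfrac{dz_0}{z_0}\wedge dz_1$ with $z_0=x,\ z_1=y$ is already in log Darboux normal form, which is an independent certificate that $(X,D,\omega)$ is log symplectic.

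The delicate points are twofold. First, one must fix the sign/orientation convention so that the induced bracket comes out as $\{x,y\}=x$ rather than $-x$; this is why I would carry the explicit computation $\omega(X_x,X_y)=x$ rather than appeal to a formula. Second, the definition of log symplectic manifold is stated for a compact $X$, whereas here $X=\mathbb{C}^2$ is affine; I would therefore read the nonvanishing condition $\neq 0\in H^{2n}(X,\Omega^{*}([D]))$ at the level of the germ of $\omega$, where the genuine content is the nondegeneracy of $\omega$ as a logarithmic form, and this is what the above verification establishes.
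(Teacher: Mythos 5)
Your proposal is correct and takes essentially the same approach as the paper: the paper's entire proof consists of the single sentence exhibiting $\omega=\dfrac{dx}{x}\wedge dy$ as the associated log symplectic $2$-form. Your additional verifications --- that $\omega$ is logarithmic along the square free divisor $\{x=0\}$, closed, nondegenerate with $\omega^{\wedge 1}\neq 0$, and that solving $i_{X_f}\omega=df$ gives back $\{x,y\}=\omega(X_x,X_y)=x$ --- together with your remarks on the sign convention and on the compactness hypothesis, simply supply the details the paper leaves implicit.
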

\begin{proof}
 The associated log symplectic 2-form is $\omega=\dfrac{dx}{x}\wedge dy.$
\end{proof}

It follow from this Proposition that Poisson cohomology, logarithmic Poisson cohomology and logarithmic De Rham cohomology are equal.
\subsubsection{Computation of $H^i_{PS}; i=0, 1, 2.$}
These spaces are given by the following Proposition
\begin{proposition}\label{L2.1}
$H^0_{PS}\cong\mathbb{C}$, $H^1_{PS}\cong\mathbb{C}$, $H^2_{PS}\cong 0_\cala.$
\end{proposition}
\begin{proof}
According to the above construction of cochains spaces of logarithmic Poisson complex, we have:
\begin{enumerate}
\item Calculation of  $H^0_{PS}.$\\
For all $f\in\cala.$

$f\in\ker d^0_{\tilde{H}}$ iff $ \dfrac{\partial f}{\partial y}= \dfrac{\partial f}{\partial x}=0$
Therefore $f\in\mathbb{C}$

\item Calculation of  $H^2_{PS}.$\\
For all  $g\in\cala, g=d^1_{\tilde{H}}(0, \int gdy + k(x)).$
  Then $ d^1_{\tilde{H}}$  is an epimorphism
 \item Calculation of  $H^1_{PS}.$\\
 We have $\cala^2\cong(\mathbb{C}[y]\times\mathbb{C}[x])\oplus(x\cala\times y\cala).$ Then
 for all  $(f_1,f_2)\in\cala\times\cala,$ there is  $g_1\in\mathbb{C}[y], g_2\in\mathbb{C}[x], h_2,  h_1\in\cala$
such that $f_1=g_1(y)+xh_1$ and $f_2=g_2(x)+yh_2.$ But for all  $(a(y),b(x))\in \mathbb{C}[y]\times\mathbb{C}[x], x\dfrac{\partial a(y)}{\partial x}+\dfrac{\partial b(x)}{\partial y}=0.$ Then $\mathbb{C}[y]\times\mathbb{C}[x]\subset\ker d^1_{\tilde{H}}. $
For similar reasons, we have:
$$ \begin{array}{llll}
\ker (d^1_{\tilde{H}}):&=& \ker(d^1_{\tilde{H}})\cap\cala^2\\
                       &=& (\mathbb{C}[y]\times\mathbb{C}[x])\oplus \ker(d^1_{\tilde{H}})\cap(x\cala\times y\cala)\\
                       &=&  (\mathbb{C}[y]\times\mathbb{C}[x])\oplus \Theta(\cala)
\end{array}$$
where $\Theta$ is defined by
$$
\begin{array}{lllllll}
 \cala &\xymatrix{\ar[r]^\Theta &\cala^2}
                & a &\mapsto  (xa, -\int x\dfrac{\partial xa}{\partial x}dy)
\end{array}
$$
It is easy to verify that $\Theta(\cala)\subset\ker(d^1_{\tilde{H}}).$\\
In other hand, we have the following decomposition of  $\cala.$
\begin{equation*}
\cala\cong\mathbb{C}[x]\oplus y\mathbb{C}[y]\oplus xy\cala
\end{equation*}
Therefore, for all $f\in\cala,$ there is  $(f_1, q, p)\in \mathbb{C}[x]\times\mathbb{C}[y]\times\cala$
such that $f=f_1+yq+xyp.$\\
Then\\ $ \dfrac{\partial f}{\partial y}  =  q+y\dfrac{\partial
q}{\partial y}+x(p+y\dfrac{\partial p}{\partial y})
  =  (1+y\dfrac{\partial }{\partial y}) q + x(1+y\dfrac{\partial }{\partial y})p\in\mathbb{C}[y]\oplus x(1+y\dfrac{\partial }{\partial
  y})(\cala)$\\
  and\\
  $
-x\dfrac{\partial f}{\partial x}  =  -x\dfrac{\partial f_1}{\partial
x} -xyp -x^2y\dfrac{\partial p}{\partial x}
  =  -x\dfrac{\partial f_1}{\partial x} -xy(1+x\dfrac{\partial }{\partial x})p\in x\mathbb{C}[x]\oplus xy(1+x\dfrac{\partial }{\partial x})\cala
$\\
we consider;\\ $ \Psi:\cala  \rightarrow   \cala^2;\quad f \mapsto
(x(1 + y\dfrac{\partial}{\partial y})f, -xy(1+x\dfrac{\partial
}{\partial
x})f) $ \\
Since $(x(1 + y\dfrac{\partial}{\partial y})f,
-xy(1+x\dfrac{\partial }{\partial x})f)=
    ( xf\dfrac{\partial y}{\partial y}+ xy\dfrac{\partial f}{\partial y}, -x\dfrac{\partial x}{\partial x}yf -x^2\dfrac{\partial yf}{\partial x})
   =  (\dfrac{\partial xyf}{\partial y}, -x\dfrac{\partial xyf}{\partial x})
   =   d^0_{\tilde{H}}(xyf)
$ and $\Psi(\cala)\subset d^0_{\tilde{H}}(\cala).$ \\ Then
\begin{equation*}
\begin{array}{lll}
(\dfrac{\partial f}{\partial y}, -x\dfrac{\partial f}{\partial x}) & \in & (\mathbb{C}[y]\times x\mathbb{C}[x])\oplus\Psi(\cala)  \\
\end{array}
\end{equation*}
Conversely,
 for all  $F:=(f_1(y), xf_2(x))+\Psi(p)\in (\mathbb{C}[y]\times x\mathbb{C}[x])\oplus\Psi(\cala),$
As a result of the foregoing,we have
\begin{equation*}
F=d^0_{\tilde{H}}(\int f_1dy-\int f_2 dx)+ d^0_{\tilde{H}}(xyp)=d^0_{\tilde{H}}(\int f_1dy-\int f_2 dx + xyp)\in d^0_{\tilde{H}}(\cala)
\end{equation*}
Then
\begin{equation*}
d^0_{\tilde{H}}(\cala)\cong (\mathbb{C}[y]\times x\mathbb{C}[x])\oplus\Psi(\cala)
\end{equation*}
On the other hand, due to the fact that
$d^0_{\tilde{H}}(\int xady)=(xa, -\int x\dfrac{\partial xa}{\partial x}dy)$ for all $a\in\cala,$ we can conclude that$\Theta(\cala)\subset d^0_{\tilde{H}}(\cala) .$ Moreover, by direct calculation, we show that $\Theta(\cala)\subset \Psi(\cala).$\\
Since $(\mathbb{C}[y]\times\mathbb{C}[x])\cong(\mathbb{C}[y]\times
x\mathbb{C})\oplus(0_\cala\times \mathbb{C} ) $
and, $x\dfrac{\partial \cala}{\partial x}\cap\mathbb{C}=0_\cala,$ we have: $d^0_{\tilde{H}}(\cala)\cap(0_\cala\times\mathbb{C})\cong 0_\cala.$ \\
Then
$$\begin{array}{ccc}
H^1_{PS}
 & \cong &   \mathbb{C}
\end{array}$$
\end{enumerate}
\end{proof}
\subsubsection{Computation of $H^i_{DS}, i=0, 1, 2.$}
By definition, the logarithmic De Rham complex associated to the ideal $x\cala$ is:
\begin{equation}\label{E9}
 \xymatrix{0\ar[r]&\cala\ar[r]^{d^0}&\Omega^1_\cala(\log x\cala)\ar[r]^{d^1}&\Omega^2_\cala(\log x\cala)\ar[r]&0}
\end{equation}
where \begin{equation*}
      d^0(a):=x\partial_x(a)\dfrac{dx}{x}+\partial_y(a)dy; \quad d^1(a\dfrac{dx}{x}+bdy):=(x\partial_x(b)-\partial_y(a))\dfrac{dx}{x}\wedge dy.
     \end{equation*}
\begin{proposition}
The following diagram is commutative
\begin{equation*}
\xymatrix{0\ar[r]&\cala\ar[d]\ar[r]^{d^0}&\Omega_\cala(\log x\cala)\ar[d]^{-\tilde{H}}\ar[r]^{d^1}&\Omega^2_\cala(\log x\cala)\ar[d]^{-\tilde{H}}\ar[r]&0\\
          0\ar[r]&\cala\ar[r]^{d^0_{\tilde{H}}}&\cala^2\ar[r]^{d^1_{\tilde{H}}}&\cala\ar[r]&0}
\end{equation*}
\end{proposition}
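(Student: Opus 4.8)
The plan is to verify the two squares separately, and since all four corner objects are made completely explicit (via the identifications $Alt^1(\Omega_\cala(\log\cali),\cala)\cong\cala^2$ and $Alt^2(\Omega_\cala(\log\cali),\cala)\cong\cala$ obtained by evaluating on the generators $\tfrac{dx}{x},dy$), it is enough to test each identity on a general element written in the coordinates of Lemma~\ref{L1.1}. The first step is to make the vertical maps explicit. Using the defining formula $\tilde{H}(f)(\alpha_1,\dots,\alpha_p)=(-1)^pf(\tilde{H}(\alpha_1),\dots,\tilde{H}(\alpha_p))$, the values $\tilde{H}(\tfrac{dx}{x})=\partial_y$ and $\tilde{H}(dy)=-x\partial_x$ coming from (\ref{E10}), and the pairing $\langle\tfrac{dx}{x},\delta\rangle=\tfrac1x\delta(x)$, $\langle dy,\delta\rangle=\delta(y)$, one computes
$$-\tilde{H}\Big(a\tfrac{dx}{x}+b\,dy\Big)=(b,-a)\in\cala^2,\qquad -\tilde{H}\Big(g\,\tfrac{dx}{x}\wedge dy\Big)=-g\in\cala .$$
Note that each $-\tilde{H}$ is $\cala$-linear, so these two formulas determine the vertical arrows entirely.

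For the left square the left-hand vertical arrow is the identity, because $\tilde{H}$ restricts to the identity on $\Omega^0_\cala=\cala$. Substituting $d^0(f)=x\partial_x f\,\tfrac{dx}{x}+\partial_y f\,dy$ into the formula above gives $-\tilde{H}(d^0 f)=(\partial_y f,-x\partial_x f)$, which is exactly $d^0_{\tilde{H}}(f)$. Hence the left square commutes, and one recognises this as the degree-zero instance of the already-established relation $d_{\tilde{H}}\circ\tilde{H}=-\tilde{H}\circ d$.

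For the right square I take a general $\eta=a\tfrac{dx}{x}+b\,dy$ and compute both composites. Going right then down, $d^1(\eta)=(x\partial_x b-\partial_y a)\tfrac{dx}{x}\wedge dy$, and applying $-\tilde{H}$ collapses this to a single element of $\cala$. Going down then right, one first obtains $-\tilde{H}(\eta)=(b,-a)$ and then applies $d^1_{\tilde{H}}(f_1,f_2)=\partial_y f_2+x\partial_x f_1$. Comparing the two outputs reduces the proposition to the degree-one instance of $d_{\tilde{H}}\circ\tilde{H}=-\tilde{H}\circ d$; here the clean form of the differentials rests on the fact that $[\tfrac{dx}{x},dy]_s=0$, which is immediate from $\{x,y\}=x$ and the vanishing of brackets with constants.

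The step I expect to be the main obstacle is the sign bookkeeping. The factor $(-1)^p$ built into $\tilde{H}$ on $p$-forms is precisely what upgrades the naive chain relation to the \emph{anti}commutation $d_{\tilde{H}}\circ\tilde{H}=-\tilde{H}\circ d$; to turn this into an honestly commuting ladder one is forced to take the vertical arrows to be $(-1)^p\tilde{H}$, that is, the identity in degree $0$, $-\tilde{H}$ in degree $1$, and $+\tilde{H}$ in degree $2$. I would therefore fix the orientation of $\tfrac{dx}{x}\wedge dy$ and the identification $Alt^2(\Omega_\cala(\log\cali),\cala)\cong\cala$ compatibly with the stated formula for $d^1_{\tilde{H}}$, and then check that the degree-two column carries the sign dictated by this convention. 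With that choice the right-hand square commutes on the single generator $\tfrac{dx}{x}\wedge dy$, which completes the verification.
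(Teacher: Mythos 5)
Your proposal follows the same route as the paper's own proof: make the identifications $Alt^1(\Oml,\cala)\cong\cala^2$ and $Alt^2(\Oml,\cala)\cong\cala$ explicit by evaluating on the generators $\tfrac{dx}{x},dy$, note that $[\tfrac{dx}{x},dy]_s=0$, and verify the two squares by direct computation on a general element; your treatment of the left square coincides with the paper's verbatim. The genuine difference is in the sign bookkeeping for the right square, and there your analysis is the more careful one. The paper asserts $-\tilde{H}(d^1(\alpha))\cong x\partial_x(g)-\partial_y(f)$ for $\alpha=f\tfrac{dx}{x}+g\,dy$; but with the paper's own convention $\tilde{H}(\omega)(\alpha_1,\alpha_2)=(-1)^2\,\omega(\tilde{H}(\alpha_1),\tilde{H}(\alpha_2))$ and the same evaluation identification of $Alt^2(\Oml,\cala)$ with $\cala$ that produces the stated formula $d^1_{\tilde{H}}(f_1,f_2)=\partial_yf_2+x\partial_xf_1$, one finds, exactly as you do,
\begin{equation*}
-\tilde{H}\Bigl(c\,\tfrac{dx}{x}\wedge dy\Bigr)\Bigl(\tfrac{dx}{x},\,dy\Bigr)=-c\,\Bigl(\tfrac{dx}{x}\wedge dy\Bigr)(\partial_y,\,-x\partial_x)=-c ,
\end{equation*}
so that with $-\tilde{H}$ in degree two the right-hand square \emph{anti}commutes. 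This is forced in any case by the corollary $d_{\tilde{H}}\circ\tilde{H}=-\tilde{H}\circ d$: it gives $d^1_{\tilde{H}}\circ(-\tilde{H})=\tilde{H}\circ d^1=-\bigl((-\tilde{H})\circ d^1\bigr)$, so the two paths through the right square are negatives of each other. Your remedy—taking the vertical arrows to be $(-1)^p\tilde{H}$ (identity, $-\tilde{H}$, $+\tilde{H}$), or equivalently absorbing the compensating sign into the identification $Alt^2(\Oml,\cala)\cong\cala$—is precisely what makes the ladder honestly commute, and it is the statement the paper should have made; the paper's proof only appears to close because the degree-two identification is never written down there. Note that nothing downstream is affected by this correction: a chain map rescaled by $(-1)^p$ in degree $p$ is still a chain map, and it is an isomorphism exactly when $\tilde{H}$ is, so the intended conclusion $H^*_{DS}\cong H^*_{PS}$ holds either way.
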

\begin{proof}
For all $a\in\cala$, we have
$
\tilde{H}(da)  =  \tilde{H}(x\partial_x(a)\dfrac{dx}{x}+\partial_y(a)dy)
  =  -\partial_y(a)x\partial_x+x\partial_x(a)\partial_y
  \cong  (-\partial_y(a),x\partial_x(a))$ and
$d^0_{\tilde{H}}(a)  \cong  (\partial_y(a),-x\partial_x(a))
                    =     -\tilde{H}(da)
$
Moreover, for any $\alpha=f\dfrac{dx}{x}+gdy\in\Oml,$ we have:
$
d^1(\alpha)  =  (x\partial_x(g)-\partial_y(f))\dfrac{dx}{x}\wedge dy,$\quad
$-\tilde{H}(d^1(\alpha))  \cong  x\partial_x(g)-\partial_y(f).$\\
However,
$-\tilde{H}(\alpha)  =  gx\partial_x-f\partial_y
                    \cong  (g, -f)$
, we have
$d^1_{\tilde{H}}(-\tilde{H})  =  d^1_{\tilde{H}}(gx\partial_x-f\partial_y)
                             \cong   x\partial_x(g)-\partial_y(f)$
This ends the proof
\end{proof}
The following gives the logarithmic De Rham cohomology spaces.
\begin{proposition}
$H^0_{DS}\cong\mathbb{C}$, $H^1_{DS}\cong\mathbb{C}$, $H^2_{DS}\cong 0_\cala.$
\end{proposition}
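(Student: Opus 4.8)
The plan is to exploit the preceding Proposition (the commutative diagram just above), which already supplies the vertical maps of a morphism from the logarithmic De Rham complex to the logarithmic Poisson complex, and to upgrade that morphism to a chain \emph{isomorphism}, so that the cohomology spaces can simply be transported. First I would record that in the three degrees the vertical arrows are, respectively, the identity of $\cala$, the map $-\tilde{H}\colon\Oml\to\cala^2$, and the map $-\tilde{H}\colon\Omega^2_\cala(\log x\cala)\to\cala$. Using the explicit formula (\ref{E10}) together with the splitting of Lemma \ref{L1.1}, the middle arrow sends $f\dfrac{dx}{x}+g\,dy$ to $(g,-f)$, which is visibly bijective, and the top arrow sends $h\dfrac{dx}{x}\wedge dy$ to $h$, again bijective; the degree-zero arrow is the identity. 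Alternatively, since Example 1 has just been shown to be log symplectic with $\mu=x\partial_x\wedge\partial_y$, the bijectivity of $\tilde{H}$ is already guaranteed by Proposition \ref{P1.10}. Either way, all three vertical maps are isomorphisms of $\cala$-modules.

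Next, because the preceding Proposition establishes that the squares commute, the triple $(\mathrm{id},-\tilde{H},-\tilde{H})$ is a morphism of chain complexes all of whose components are isomorphisms; hence it is an isomorphism of complexes and induces isomorphisms $H^i_{DS}\cong H^i_{PS}$ in every degree. Substituting the values computed in Proposition \ref{L2.1}, namely $H^0_{PS}\cong\mathbb{C}$, $H^1_{PS}\cong\mathbb{C}$ and $H^2_{PS}\cong 0_\cala$, immediately yields the stated values for $H^0_{DS}$, $H^1_{DS}$ and $H^2_{DS}$.

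I expect no genuine obstacle here, since the real work was already carried out in Proposition \ref{L2.1}; the only point needing a little care is verifying that the vertical maps are honestly bijective rather than merely surjective, which is what the two descriptions above secure. As a self-contained fallback, should one prefer not to cite the Poisson computation, the three spaces can be read off directly from the logarithmic De Rham complex. Indeed $\ker d^0=\mathbb{C}$, because $x\partial_x a=\partial_y a=0$ forces $a$ constant in the domain $\mathbb{C}[x,y]$; the map $d^1$ is surjective since $g\dfrac{dx}{x}\wedge dy=d^1\!\left(-\big(\int g\,dy\big)\dfrac{dx}{x}\right)$, whence $H^2_{DS}\cong 0_\cala$; and $H^1_{DS}$ is then obtained by repeating, almost verbatim, the decomposition argument of Proposition \ref{L2.1} (splitting $\cala\cong\mathbb{C}[x]\oplus y\mathbb{C}[y]\oplus xy\cala$) under the module isomorphism $\Oml\cong\cala\dfrac{dx}{x}\oplus\cala\,dy$. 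The first route, via the chain isomorphism, is shorter and is the one I would present.
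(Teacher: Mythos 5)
Your proof is correct, but your primary route is genuinely different from the paper's. The paper computes $H^*_{DS}$ directly: after identifying $\Omega^1_\cala(\log x\cala)\cong\cala\times\cala$ and $\Omega^2_\cala(\log x\cala)\cong\cala$, it notes $f=d^1(-\int f\,dy,\,0)$ to get surjectivity of $d^1$ (hence $H^2_{DS}\cong 0$), describes $\ker(d^1)$ explicitly as $\theta(\cala)\oplus x\mathbb{C}\oplus\mathbb{C}$ with $\theta(u)=(x\int\partial_x u\,dy,\,u)$, and shows $\theta(\cala)\oplus x\mathbb{C}\cong d^0(\cala)$ while $\mathbb{C}\cap d^0(\cala)=0_\cala$; this is essentially your ``fallback'' argument. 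Your main route instead promotes the commutative diagram of the preceding Proposition to an isomorphism of complexes $(\mathrm{id},-\tilde{H},-\tilde{H})$ and transports the values of Proposition \ref{L2.1}. That is legitimate: by Lemma \ref{L1.1} the middle map $f\dfrac{dx}{x}+g\,dy\mapsto(g,-f)$ is bijective, the top map $h\dfrac{dx}{x}\wedge dy\mapsto h$ is bijective, and the commutativity of the squares is exactly what the preceding Proposition supplies (alternatively, bijectivity of $\tilde{H}$ follows from Proposition \ref{P1.10} once $\mu=x\partial_x\wedge\partial_y$ is known to be log symplectic). Your route is shorter and has the merit of actually using the diagram that the paper proves but then never exploits. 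What the paper's direct computation buys, and what your transport argument by design cannot, is an \emph{independent} verification that $H^*_{DS}\cong H^*_{PS}$ for this log-symplectic example — the apparent purpose of Section 3.1 being to check the general Corollary on a concrete case; with the transport argument the coincidence holds by construction rather than by comparison of two separate computations. Your fallback computation, if carried out, would restore that independence.
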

\begin{proof}
 For simplicity, we adopt the following notations:
\begin{equation*}
 \begin{array}{ccccc}
  &\Omega^1_\cala(\log x\cala)&\overset{\cong}{\rightarrow}&\cala\times\cala&\\
        &a\dfrac{dx}{x}+bdy&\mapsto&(a,b)&
 \end{array}
\begin{array}{ccccc}
  &\Omega^2_\cala(\log x\cala)&\overset{\cong}{\rightarrow}&\cala&\\
        &a\dfrac{dx}{x}\wedge dy&\mapsto&a&
 \end{array}
\end{equation*}
With these notations, the complex \ref{E9} becomes:
\begin{equation}\label{E10}
 \xymatrix{0\ar[r]&\cala\ar[r]^{d^0}&\cala\times\cala\ar[r]^{d^1}&\cala\ar[r]&0}
\end{equation}
where $d^0(f)=(x\partial_xf,\partial_yf)$, \quad and \quad $d^1(f_1, f_2)=x\partial_xf_2-\partial_yf_1.$\\
For all $f\in\cala,$ $f=d^1(-\int fdy, 0).$ Then $\cala\cong d^1(\cala\times\cala)$ and therefore, $H^2_{DS}\cong 0.$ \\
It is easy to see that $H^0_{DS}\cong\mathbb{C}.$\\
Let $(f^1, f^2)\in\cala\times\cala.$ $(f^1, f^2)\in \ker (d^1)$ iff $f^1=x\int \partial_xf^2dy+k(x).$
Then $\ker (d^1)\cong\{(x\int \partial_xudy, u); u\in\cala\}\oplus x\mathbb{C}\oplus\mathbb{C}.$
The following map is an mono morphism of vector spaces.
\begin{equation*}
 \begin{array}{ccccc}
  \theta:&\cala&\rightarrow& x\cala\times\cala&\\
          &u& \mapsto &(x\int \partial_xudy, u)&
 \end{array}
\end{equation*}
and $\ker(d^1)\cong \theta(\cala)\oplus (x\mathbb{C}\times0_\cala)\cong \theta(\cala)\oplus (x\mathbb{C}\oplus\mathbb{C}).$\\
Moreover, for any $u\in\cala$ and $a\in\mathbb{C}[x],$ we have: \\
$d^0(\int udy+\int adx)=(x\int\partial_xudy +xa,u)=(x\int\partial_xudy,u)+(xa,0)=\theta(u)+(xa,0)\in \theta(\cala)\oplus (x\mathbb{C}).$
Then $\theta(\cala)\oplus (x\mathbb{C})\subset d^0(\cala).$
Since $\mathbb{C}\cap d^0(\cala)=0_\cala$, we have; $d^0(\cala)=d^0(\cala)\cap(\ker(d^1))\cong \theta(\cala)\oplus (x\mathbb{C}).$
Therefore, $\ker(d^1)\cong d^0(\cala)\oplus\mathbb{C}.$ And then $H^1_{DS}\cong \mathbb{C}.$
\end{proof}

\subsubsection{Computation of Poisson cohomology of\\ $\{x,y\}=x$.}

By a direct calculation, we show that the Poisson complex of  $\{x,y\}=x$ is given by:
\begin{equation}
\xymatrix{0\ar[r]&\cala\ar[r]^{d^0_H}&\cala\times\cala\ar[r]^{d^1_H}&\cala\ar[r]&0}
\end{equation}
Where
$d^0_{H}(f)=(x\partial_yf,-x\partial_xf)$ and $
d^1_{H}(f_1,f_2)=x\partial_yf_2+x\partial_xf_1-f_1$

\begin{proposition}\label{L2.2}
$H^0_P\cong \mathbb{C}$,  $H^1_P\cong \mathbb{C}$, $H^2_P\cong 0_\cala.$
\end{proposition}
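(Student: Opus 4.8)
The plan is to read the three groups straight off the explicit two-term complex just displayed, namely $0 \to \cala \xrightarrow{d^0_H} \cala\times\cala \xrightarrow{d^1_H} \cala \to 0$ with $d^0_H(f)=(x\partial_y f,-x\partial_x f)$ and $d^1_H(f_1,f_2)=x\partial_y f_2+x\partial_x f_1-f_1$. There is a shortcut worth recording: the immediately preceding proposition shows that $\{x,y\}=x$ is logsymplectic, so the corollary to Proposition \ref{P1.10} already gives $H^*_P\cong H^*_{DS}\cong H^*_{PS}$, and the values are then those of Proposition \ref{L2.1}. I would nonetheless carry out the direct computation, both as an independent consistency check in the style of this section and because only $H^1_P$ needs a real argument.

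For $H^0_P$ I compute $\ker d^0_H$. Since $\cala=\mathbb{C}[x,y]$ is a domain and $x\neq 0$, the equations $x\partial_y f=0$ and $x\partial_x f=0$ force $\partial_x f=\partial_y f=0$, hence $f\in\mathbb{C}$ and $H^0_P\cong\mathbb{C}$. For $H^2_P$ I show $d^1_H$ is onto: writing an arbitrary target as $g=g_0(y)+x h$ with $g_0\in\mathbb{C}[y]$ and $h\in\cala$, the pair $\big(-g_0,\ \int h\,dy\big)$ satisfies $d^1_H\big(-g_0,\int h\,dy\big)=xh+g_0=g$, because $x\partial_x(-g_0)=0$ and $-(-g_0)=g_0$ restores the pure-$y$ part. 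Hence $H^2_P\cong 0_\cala$.

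The main work is $H^1_P=\ker d^1_H/\mathrm{Im}\,d^0_H$. A cocycle $(f_1,f_2)$ satisfies $f_1=x(\partial_x f_1+\partial_y f_2)$, so $f_1\in x\cala$; writing $f_1=xg$ the relation collapses, after dividing by $x$, to $\partial_y f_2=-x\partial_x g$. Integrating in $y$ parametrizes the kernel as $\ker d^1_H=\{\,(xg,\ -x\partial_x(\int g\,dy)+\phi(x))\ :\ g\in\cala,\ \phi\in\mathbb{C}[x]\,\}$. Subtracting the coboundary $d^0_H(\int g\,dy)=(xg,\ -x\partial_x(\int g\,dy))$ shows every cocycle is cohomologous to one of the form $(0,\phi(x))$ with $\phi\in\mathbb{C}[x]$. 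Finally $(0,\phi)$ lies in $\mathrm{Im}\,d^0_H$ exactly when $\phi=-x\psi'(x)$ for some $\psi\in\mathbb{C}[x]$, i.e. exactly when $\phi\in x\mathbb{C}[x]$. Therefore $H^1_P\cong\mathbb{C}[x]/x\mathbb{C}[x]\cong\mathbb{C}$, represented by the class of $(0,1)$.

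The only delicate point is this last step: one must pin down both the full kernel and the full image in degree one, and the extra dimension comes precisely from the fact that the image of $\psi\mapsto -x\psi'$ is $x\mathbb{C}[x]$ rather than all of $\mathbb{C}[x]$, leaving the constant term uncancelled. The outcome $(\mathbb{C},\mathbb{C},0)$ agrees with the prediction $H^*_P\cong H^*_{PS}$ from Proposition \ref{L2.1}, which confirms the computation.
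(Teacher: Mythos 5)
Your proof is correct and takes essentially the same route as the paper: you parametrize $\ker(d^1_H)$ by the pairs $(xg,\,-x\partial_x(\int g\,dy)+\phi(x))$, which is exactly the paper's monomorphism $\beta(u)=(xu,\,-x\int\partial_x u\,dy)$ plus the $\mathbb{C}[x]$ summand, and you identify the coboundaries the same way, so that the surviving class is the constant $\phi$ (the paper phrases this as ``no $f$ with $x\partial_x f\in\mathbb{C}^*$'', you as $\mathbb{C}[x]/x\mathbb{C}[x]\cong\mathbb{C}$). The only additions are cosmetic: you write out the $H^0_P$ and $H^2_P$ checks that the paper dismisses as ``without difficulty,'' and you note the logsymplectic shortcut, which the paper also records just before this proposition.
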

\begin{proof}
It is shown without difficulty that $H^0_P\cong \mathbb{C}$ and $H^2_P\cong 0_\cala.$ So we have to prove that $H^1_P\cong \mathbb{C}.$
For this, for all $(f_1,f_2)\in \cala\times\cala,$ \\
$(f_1, f_2)\in \ker(d^1_H)$ iff there is $u\in\cala$ and $a(x)\in\mathbb{C}[x]$ such that
$(f_1, f_2)=(xu, -x\int \partial_xudy)+(0,a(x)).$ \\
We set \begin{equation*}
        \beta:\cala\rightarrow x\cala\times\cala, u\mapsto(xu,-x\int \partial_xudy)
       \end{equation*}
Clearly, $\beta$ is a monomorphism, $\ker(d^1_H)\cong\beta(\cala)\oplus x\mathbb{C}[x]\oplus\mathbb{C}$,
 $\beta(\cala)\oplus x\mathbb{C}[x]\subset d^0_H(\cala).$ In addition, there is no $f\in\cala$ such that $x\partial_xf\in\mathbb{C}^*.$
Then $\ker(d^1_H)\cong d^0_H(\cala)\oplus\mathbb{C}.$
As result, we have $H^1_P\cong \mathbb{C}.$
\end{proof}
\subsection{Example 2: $(\cala:=\mathbb{C}[x,y], \{x,y\}=x^2).$}

Let us consider on $\cala=\mathbb{C}[x,y]$ the Poisson bracket defined on variable $x, y$ by
$\{x,y\}=x^2.$\\

Note that, $\Omega_\cala(\log x^2\cala)$ is isomorphic to the $\cala$-module generated by\\
 $\{\dfrac{dx}{x}\cup\Omega_\cala\}$ since
$\dfrac{dx^2}{x^2}=2\dfrac{dx}{x}.$ Therefore, it is easy to see that the bracket $\{x,y\}=x^2$ is logarithmic principal Poisson bracket along the ideal $x^2\cala.$
the associated logarithmic Hamiltonian map is defined on generators of $\Omega_\cala(\log x^2\cala)$ by;
$\tilde{H}(\dfrac{dx}{x})=x\partial_y, \tilde{H}(dy)=-x^2\partial_x.$ We therefore deduced the associated logarithmic Poisson complex:\\
$d^0_{\tilde{H}}(f)=(x\partial_yf, -x^2\partial_xf),$ $d^1_{\tilde{H}}(f_1, f_2)=x\partial_yf_2+x^2\partial_xf_1-xf_1.$ Where we have the following identification
\begin{equation*}
 \begin{array}{ccccc}
  &Der_\cala(\log x^2\cala)&\overset{\cong}{\rightarrow}&\cala\times\cala&\\
        &ax\partial_x+b\partial_y&\mapsto&(a,b)&
 \end{array}
\begin{array}{ccccc}
  &Der_\cala(\log x^2\cala)\wedge Der_\cala(\log x^2\cala)&\overset{\cong}{\rightarrow}&\cala&\\
        &ax\partial_x\wedge\partial_y&\mapsto&a&
 \end{array}
\end{equation*}
\subsubsection{Computation of $H^2_{PS}.$}

 Since
$\cala\cong\mathbb{C}[y]\oplus x\cala,$ for all $g\in\cala,$ there
is $g_1, g_2\in\cala$ such that $g=g_1+xg_2.$ Therefore, for all
$g\in\cala, g\in d^1_{\tilde{H}}(\cala)$ iff
$g=xg_2=x\partial_yf_2+x^2\partial_xf_1-xf_1.$ But
  $xg_2=x\partial_y(x\int\partial_xg_2dy)-x^2\partial_xg_2-xg_2$ and the equation $x(\partial_yv+x\partial_xu-u)=g(y)\in\mathbb{C}[y]^*$ has no solution in $\cala\times\cala.$ Then $\cala\cong d^1_{\tilde{H}}(\cala\times\cala)\oplus\mathbb{C}[y].$  It follows that
$$H^2_{PS}\cong \mathbb{C}[y].$$
\subsubsection{Computation of  $H^1_{PS}.$}

To compute $H^1_{PS}$, we first recall the following fact.
\begin{lemma}
 Let $\varphi: E\rightarrow F$ be a mono morphism of vector spaces. For all subset $A, B$ of $E, \varphi(A\oplus B)=\varphi(A)\oplus\varphi(B)$
\end{lemma}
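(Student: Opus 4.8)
The plan is to separate the statement into two pieces: first that $\varphi$ carries the sum $A+B$ onto the sum $\varphi(A)+\varphi(B)$, and second that the directness of the decomposition is preserved, i.e. $\varphi(A)\cap\varphi(B)=\{0\}$. The first piece is a formal consequence of linearity and holds for \emph{any} linear map; the injectivity hypothesis enters only in the second piece, which is in fact the entire content of the lemma.

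First I would note that, since $\varphi$ is linear, $\varphi(A+B)=\varphi(A)+\varphi(B)$. Indeed, every element of $A+B$ can be written as $a+b$ with $a\in A$ and $b\in B$, and $\varphi(a+b)=\varphi(a)+\varphi(b)$; conversely any $\varphi(a)+\varphi(b)$ equals $\varphi(a+b)$. Hence the image of the sum is exactly the sum of the images, and this step uses nothing beyond linearity. Next, to show that the sum $\varphi(A)+\varphi(B)$ is direct, I would take an arbitrary $w\in\varphi(A)\cap\varphi(B)$ and write $w=\varphi(a)=\varphi(b)$ for some $a\in A$, $b\in B$. Because $\varphi$ is a monomorphism, $\varphi(a)=\varphi(b)$ forces $a=b$, so this common element lies in $A\cap B$, which is $\{0\}$ as $A\oplus B$ is a direct sum. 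Therefore $w=\varphi(0)=0$, giving $\varphi(A)\cap\varphi(B)=\{0\}$. Combining the two steps yields $\varphi(A\oplus B)=\varphi(A)\oplus\varphi(B)$.

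The only delicate point — and the reason the hypothesis that $\varphi$ be a monomorphism cannot be dropped — is the directness in the second step: without injectivity the images $\varphi(A)$ and $\varphi(B)$ could intersect nontrivially even when $A\cap B=\{0\}$. Thus the main (indeed, the sole) obstacle is ensuring $\varphi(A)\cap\varphi(B)=\{0\}$, and the injectivity of $\varphi$ disposes of it immediately.
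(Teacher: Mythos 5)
Your proof is correct and follows essentially the same route as the paper's: first $\varphi(A+B)=\varphi(A)+\varphi(B)$ by linearity, then triviality of $\varphi(A)\cap\varphi(B)$ from injectivity together with $A\cap B=\{0\}$. Your write-up is in fact cleaner than the paper's, which compresses the intersection step (via $\varphi(A)\cap\varphi(B)\subseteq\varphi(A\cap B)=\{0\}$) into a single, somewhat garbled line.
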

\begin{proof}
 It is clear that $\varphi(A\oplus B)=\varphi(A)+\varphi(B).$ If $z\in\varphi(A)\cap\varphi(B),$ then $z\in\varphi(A\oplus B)=0_E.$
Therefore, $\varphi(A\oplus B)=\varphi(A)\oplus\varphi(B).$
\end{proof}
Let $(f_1,f_2)\in\cala\times\cala.$\\
$(f_1,f_2)\in\ker(d^1_{\tilde{H}})$ iff there is $k\in\mathbb{C}[x]$
such that $f_2=\int(1-x\partial_x)f_1dy+k(x).$ So,
$\ker(d^1_{\tilde{H}})\cong\{(u, \int(1-x\partial_x)udy),
u\cala\}\oplus \mathbb{C}[x].$ We put for all $u\in\cala;
\eta(u)=(u, \int(1-x\partial_x)udy).$ Then, $\eta:\cala\rightarrow
\cala\times\cala$ is a mono morphism of vector spaces and
$\ker(d^1_{\tilde{H}})\cong\eta(\cala)\oplus\mathbb{C}[x]\cong\eta(\mathbb{C}[y])\oplus\eta(x\cala)\oplus\mathbb{C}[x];$
since $\cala\cong \mathbb{C}[y]\oplus x\cala.$ On the other hand,
for all $g\in\eta(x\cala)\oplus(0_\cala, x^2\mathbb{C}[x]),$ there
is $u\in\cala$ and $v\in \mathbb{C}[x]$ such that $g=(xu,
-x^2\int\partial_xdy+x^2v(x))=d^0_{\tilde{H}}(\int udy-\int
v(x)dx).$ Moreover, for all $u(y)\in\mathbb{C}[y]$ and $a_0,
a_1\in\mathbb{C},$ the partial differential equation:
\begin{equation*}
 \left\{\begin{array}{ccccc}
         xf_y&=&u(y)\\
         -x^2f_x&=&\int u(y)dy+a_0+a_1x
        \end{array}
\right.
\end{equation*}
has no solution in $\cala.$ Then, $\ker(d^1_{\tilde{H}})\cong \eta(\mathbb{C}[y])\oplus\mathbb{C}_1[x]\oplus d^0_{\tilde{H}}(\cala).$ Therefore,
\begin{equation*}
 H^1_{PS}\cong\eta(\mathbb{C}[y])\oplus\mathbb{C}_1[x].
\end{equation*}
where $\mathbb{C}_1[x]:=\{a_0+a_1x; a_0, a_1\in\mathbb{C}\}.$ On the other hand, since $\eta$ is a mono morphism, $\eta(\mathbb{C}[y])\cong \mathbb{C}[y].$ Then,
\begin{equation*}
 H^1_{PS}\cong\mathbb{C}[y]\oplus\mathbb{C}_1[x].
\end{equation*}
This end the prove of the following Proposition.
\begin{proposition}\label{P3.2}
 The logarithmic Poisson cohomology spaces of $\{x,y\}=x^2$ are:
\begin{equation*}
 H^1_{PS}\cong\mathbb{C}[y]\oplus\mathbb{C}_1[x]; H^2_{PS}\cong \mathbb{C}[y],H^0_{PS}\cong \mathbb{C}
\end{equation*}
\end{proposition}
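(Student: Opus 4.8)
The plan is to read off the three-term logarithmic Poisson complex already attached to $\tilde{H}$, namely
\[
0\to\cala\xrightarrow{d^0_{\tilde{H}}}\cala\times\cala\xrightarrow{d^1_{\tilde{H}}}\cala\to 0,
\]
with $d^0_{\tilde{H}}(f)=(x\partial_yf,-x^2\partial_xf)$ and $d^1_{\tilde{H}}(f_1,f_2)=x\partial_yf_2+x^2\partial_xf_1-xf_1$, and to compute its three cohomology groups separately. The groups $H^0_{PS}$ and $H^2_{PS}$ are immediate, so the real content is $H^1_{PS}$.

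For $H^0_{PS}=\ker d^0_{\tilde{H}}$, an element $f$ must satisfy $x\partial_yf=0$ and $x^2\partial_xf=0$; since $\cala=\mathbb{C}[x,y]$ is an integral domain this forces $\partial_xf=\partial_yf=0$, hence $f\in\mathbb{C}$ and $H^0_{PS}\cong\mathbb{C}$. For $H^2_{PS}$ I would use the splitting $\cala\cong\mathbb{C}[y]\oplus x\cala$. Every term of $d^1_{\tilde{H}}(f_1,f_2)=x(\partial_yf_2+x\partial_xf_1-f_1)$ is divisible by $x$, so $\operatorname{im}d^1_{\tilde{H}}\subset x\cala$; conversely, given $xg\in x\cala$, taking $f_1=0$ and $f_2=\int g\,dy$ gives $d^1_{\tilde{H}}(0,f_2)=xg$. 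Hence $\operatorname{im}d^1_{\tilde{H}}=x\cala$ and the cokernel is $\mathbb{C}[y]$.

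For $H^1_{PS}$ I would first describe $\ker d^1_{\tilde{H}}$. Dividing the relation $d^1_{\tilde{H}}(f_1,f_2)=0$ by $x$ (legitimate in the domain $\cala$) yields $\partial_yf_2=(1-x\partial_x)f_1$, so for each $f_1=u$ the partner $f_2$ is determined up to an arbitrary $k(x)\in\mathbb{C}[x]$; this exhibits $\ker d^1_{\tilde{H}}\cong\eta(\cala)\oplus\mathbb{C}[x]$, where $\eta(u)=(u,\int(1-x\partial_x)u\,dy)$ is a monomorphism of vector spaces. Using $\cala\cong\mathbb{C}[y]\oplus x\cala$ and the lemma that a monomorphism preserves direct sums, I split $\eta(\cala)=\eta(\mathbb{C}[y])\oplus\eta(x\cala)$. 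Then I would check that $\eta(x\cala)$ together with $(0_\cala,x^2\mathbb{C}[x])$ already lies in $\operatorname{im}d^0_{\tilde{H}}$, while the complementary pieces $\eta(\mathbb{C}[y])$ and $\mathbb{C}_1[x]=\{a_0+a_1x;\ a_0,a_1\in\mathbb{C}\}$ survive because the associated first-order systems $xf_y=u(y)$, $-x^2f_x=\int u\,dy+a_0+a_1x$ admit no solution $f\in\cala$. Combined with $\eta(\mathbb{C}[y])\cong\mathbb{C}[y]$, this gives $H^1_{PS}\cong\mathbb{C}[y]\oplus\mathbb{C}_1[x]$.

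The main obstacle is the $H^1_{PS}$ bookkeeping: one must pin down exactly which elements of $\ker d^1_{\tilde{H}}$ are coboundaries. The delicate point is that the factor $x^2$ (rather than $x$) in $d^0_{\tilde{H}}$ lets $\operatorname{im}d^0_{\tilde{H}}$ absorb $x^2\mathbb{C}[x]$ but not the low-degree part $\mathbb{C}_1[x]$; this discrepancy is precisely what makes $\{x,y\}=x^2$ fail to be log symplectic while remaining computable. Establishing the non-solvability of the two displayed systems, by comparing $x$-degrees of the two sides, is the step that requires the most care.
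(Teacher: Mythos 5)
Your proposal follows essentially the same route as the paper: the same description $\ker d^1_{\tilde H}\cong\eta(\cala)\oplus\mathbb{C}[x]$ with the same monomorphism $\eta(u)=(u,\int(1-x\partial_x)u\,dy)$, the same lemma that monomorphisms preserve direct sums, the same identification of $\eta(x\cala)\oplus(0_\cala,x^2\mathbb{C}[x])$ as coboundaries, and the same non-solvability argument for the system $xf_y=u(y)$, $-x^2f_x=\int u\,dy+a_0+a_1x$ isolating $\mathbb{C}[y]\oplus\mathbb{C}_1[x]$. The only (harmless) deviations are your simpler preimage $(0,\int g\,dy)$ of $xg$ in the $H^2_{PS}$ computation, where the paper uses $(-g_2,\,x\int\partial_xg_2\,dy)$, and your explicit verification of $H^0_{PS}\cong\mathbb{C}$, which the paper states without proof.
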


\subsubsection{Poisson cohomology of
$(\cala=\mathbb{C}[x,y],\{x,y\}=x^2).$}

The action of Hamiltonian map associated to this Poisson structure on generators of $\Omega_\cala$ is:\\
$H(dx)=x^2\partial_y$ and $H(dy)=-x^2\partial_x.$\\
For the sake of simplicity, we shall use the following isomorphism:

\begin{equation*}
 \begin{array}{ccccc}
  &Der_\cala&\overset{\cong}{\rightarrow}&\cala\times\cala&\\
        &a\partial_x+b\partial_y&\mapsto&(a,b)&
 \end{array}
\begin{array}{ccccc}
  &Der_\cala\wedge Der_\cala&\overset{\cong}{\rightarrow}&\cala&\\
        &a\partial_x\wedge\partial_y&\mapsto&a&
 \end{array}
\end{equation*}
With these isomorphisms, the associated Poisson complex is giving by:
$d^0_{H}(f)=(x^2\partial_yf, -x^2\partial_xf)$ and $d^1_{H}(f_1,f_2)= x^2\partial_xf_1+x^2\partial_yf_2-2xf_1.$  For all $g\in\cala,$ we have $xg=-2x(-\dfrac{1}{2}g)+x^2(\dfrac{1}{2})(-\partial_xg+\partial_y(\int \partial_xgdy)).$ Then $\cala\cong d^1_H(\cala\times\cala)\oplus\mathbb{C}[y].$ Therefore,
\begin{equation*}
 H^2_P\cong \mathbb{C}[y].
\end{equation*}
Let $(f_1,f_2)\in\cala\times\cala$; \\
$(f_1,f_2)\in\ker(d^1_H)$ iff there is $u\in\cala, a\in\mathbb{C}[x]$ such that $f_1=xu$ and $f_2=\int(1-x\partial_x)udy+a(x).$\\ So, $\ker(d^1_H)=\{(xu,\int(1-x\partial_x)udy+a(x)),\quad u\in\cala, a(x)\in\mathbb{C}[x]\}.$ We put $\varphi(u)=(xu,\int(1-x\partial_x)udy$ for all $u\in\cala.$ Then $\varphi:\cala\rightarrow x\cala\times\cala$ is a monomorphism of vector spaces and
\begin{equation*}
 \ker(d^1_H)\cong\varphi(\cala)\oplus\mathbb{C}[x]
\end{equation*}
On other hand, since $\cala\cong\mathbb{C}[y]\oplus x\cala,$ then $\varphi(\cala)\cong\varphi(\mathbb{C}[y])\oplus \varphi(x\cala).$ Also, it is easy to prove that $\varphi(x\cala)\oplus x^2\mathbb{C}[x]\subset d^0_H(\cala), $ and that $d^0_H(\cala)\cap\varphi(\mathbb{C}[y])\oplus\mathbb{C}_1[x].$ Therefore,
\begin{equation*}
 \ker(d^1_H)\cong \varphi(\mathbb{C}[y])\oplus\mathbb{C}_1[x]\oplus d^0_H(\cala)\cong \mathbb{C}[y]\oplus \mathbb{C}_1[x]\oplus d^0_H(\cala)
\end{equation*}
Therefore,
\begin{equation*}
 H^1_P\cong\mathbb{C}[y]\oplus \mathbb{C}_1[x]
\end{equation*}
This end the prove of the following Proposition
\begin{proposition}\label{P3.3}
 The  Poisson cohomology spaces of $\{x,y\}=x^2$ are:
\begin{equation*}
 H^1_{P}\cong\mathbb{C}[y]\oplus\mathbb{C}_1[x]; H^2_{P}\cong \mathbb{C}[y],H^0_{P}\cong \mathbb{C}
\end{equation*}
\end{proposition}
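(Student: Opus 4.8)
The plan is to read off everything from the three-term complex already exhibited,
$0\to\cala\xrightarrow{d^0_H}\cala\times\cala\xrightarrow{d^1_H}\cala\to 0$,
with $d^0_H(f)=(x^2\partial_y f,-x^2\partial_x f)$ and $d^1_H(f_1,f_2)=x^2\partial_x f_1+x^2\partial_y f_2-2xf_1$, and to compute the three groups in the order $H^0_P$, $H^2_P$, $H^1_P$. The outer two are immediate. A function $f\in\ker d^0_H$ satisfies $\partial_x f=\partial_y f=0$, so $H^0_P\cong\mathbb{C}$. For $H^2_P$, every summand of $d^1_H(f_1,f_2)$ is divisible by $x$, giving $\operatorname{im}d^1_H\subseteq x\cala$; conversely the explicit identity $d^1_H(-\tfrac12 g,\tfrac12\int\partial_x g\,dy)=xg$ shows $x\cala\subseteq\operatorname{im}d^1_H$. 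Hence $\operatorname{im}d^1_H=x\cala$ and, using $\cala\cong\mathbb{C}[y]\oplus x\cala$, we get $H^2_P\cong\cala/x\cala\cong\mathbb{C}[y]$.

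The substance is $H^1_P$, which I would treat exactly as in the logarithmic computation of Proposition~\ref{P3.2}. First I would solve $d^1_H(f_1,f_2)=0$. Dividing the relation by $x$ (legitimate in the domain $\cala$, in characteristic zero) forces $2f_1=x(\partial_x f_1+\partial_y f_2)$, so $f_1\in x\cala$; writing $f_1=xu$ reduces the equation to $\partial_y f_2=(1-x\partial_x)u$, whence $f_2=\int(1-x\partial_x)u\,dy+a(x)$ with $a\in\mathbb{C}[x]$ the integration constant. This identifies $\ker d^1_H\cong\varphi(\cala)\oplus(\{0\}\times\mathbb{C}[x])$, where $\varphi\colon\cala\to x\cala\times\cala$, $u\mapsto(xu,\int(1-x\partial_x)u\,dy)$, is a vector-space monomorphism.

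Next I would pin down $\operatorname{im}d^0_H$ inside this kernel, aiming at the exact equality $d^0_H(\cala)=\varphi(x\cala)\oplus(\{0\}\times x^2\mathbb{C}[x])$. The inclusion $\supseteq$ follows from $\varphi(xv)=d^0_H(\int v\,dy)$ together with $d^0_H(f(x))=(0,-x^2 f'(x))$, which sweeps out $\{0\}\times x^2\mathbb{C}[x]$. For $\subseteq$ one writes $d^0_H(f)=\varphi(x\partial_y f)+(0,-x^2 c(x))$, the residual term $c(x)$ being precisely the $y$-independent part of $\partial_x f$ produced by the indefinite $y$-integration. Quotienting $\ker d^1_H$ by this image, the monomorphism $\varphi$ cancels the $\varphi(x\cala)$ factor, and splitting $\mathbb{C}[x]=\mathbb{C}_1[x]\oplus x^2\mathbb{C}[x]$ removes the $x^2\mathbb{C}[x]$ freedom, leaving $H^1_P\cong\varphi(\mathbb{C}[y])\oplus\mathbb{C}_1[x]\cong\mathbb{C}[y]\oplus\mathbb{C}_1[x]$.

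I expect the delicate point to be the exact determination of $\operatorname{im}d^0_H$ rather than its two routine inclusions, since it requires careful bookkeeping of the integration constants. The mechanism behind the answer is that the first slot $x^2\partial_y f$ of $d^0_H(f)$ always lies in $x^2\cala$, so the $\varphi(\mathbb{C}[y])$ summand—whose first slot sits only in $x\cala$—can never be hit, accounting for the $\mathbb{C}[y]$ in $H^1_P$; meanwhile the second slot contributes exactly $x^2\mathbb{C}[x]$ of $y$-independent freedom, trimming the kernel's $\mathbb{C}[x]$ down to the surviving $\mathbb{C}_1[x]$. It is a useful consistency check that the result matches termwise the logarithmic groups $H^*_{PS}$ of Proposition~\ref{P3.2}, confirming that Poisson and logarithmic Poisson cohomology agree for this non-logsymplectic structure.
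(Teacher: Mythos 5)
Your proof is correct and takes essentially the same route as the paper: the same three-term complex, the same kernel description of $d^1_H$ via the monomorphism $\varphi(u)=(xu,\int(1-x\partial_x)u\,dy)$, and the same decompositions $\cala\cong\mathbb{C}[y]\oplus x\cala$ and $\mathbb{C}[x]=\mathbb{C}_1[x]\oplus x^2\mathbb{C}[x]$. The only difference is one of completeness, not of method: where the paper asserts the inclusions involving $d^0_H(\cala)$ as ``easy to prove,'' you verify the exact equality $d^0_H(\cala)=\varphi(x\cala)\oplus(\{0\}\times x^2\mathbb{C}[x])$ explicitly.
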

\begin{rem}
 It follow from Propositions \ref{P3.3} and \ref{P3.2} that Poisson cohomology and logarithmic Poisson cohomology of the Poisson bracket $\{x,y\}=x^2$ on $\mathbb{C}[x,y]$ are equals, although the latter is not logsymplectique. Consequently, it can be concluded that being logsymplectic is not a necessary condition for equality between the Poisson cohomology spaces and logarithmic Poisson cohomology spaces. In the next section, we give an example in which the two concepts are different.
\end{rem}
\subsection{Example 3 $\cala=\mathbb{C}[x, y, z]$ and $\{x,y\}=0, \{x,z\}=0, \{y,z\}=xyz).$}
It is easy to prove that this Poisson structure is logarithmic
principal along the ideal $xyz\mathcal{A}$ and the associated
logarithmic differential is defined by:
\begin{equation}\label{E2}
\begin{array}{llll}
d^0_{\tilde{H}}(f)  =  (0, xz\dfrac{\partial f}{\partial z},  -xy\dfrac{\partial f}{\partial y})\\
d^1_{\tilde{H}}(f_1,f_2,f_3)  =  (xz\dfrac{\partial f_3}{\partial z}+ xy\dfrac{\partial f_2}{\partial y}-xf_1, -xy\dfrac{\partial f_1}{\partial y},-xz\dfrac{\partial f_1}{\partial_z})\\
d^2_{\tilde{H}}(f_1,f_2,f_3)  =  xz\dfrac{\partial f_2}{\partial
z}+xy\dfrac{\partial f_3}{\partial y}
\end{array}
\end{equation}
By definition, we have the following expressions of associated Poisson differential.
\begin{equation}\label{E3}
\begin{array}{lll}
\delta^0(f) & = & xyz(0,\dfrac{\partial f}{\partial z},-\dfrac{\partial f}{\partial y} )\\
\delta^1(f_1,f_2,f_3) & = & (xyz\dfrac{\partial f_3}{\partial z}+xyz\dfrac{\partial f_2}{\partial y}-yzf_1-xzf_2-xyf_3, -xyz\dfrac{\partial f_1}{\partial y}, -xyz\dfrac{\partial f_1}{\partial_z})\\
\delta^2(f_1,f_2,f_3) & = & xyz(\dfrac{\partial f_2}{\partial z}+\dfrac{\partial f_3}{\partial  y})
 \end{array}
\end{equation}
\subsubsection{Computation of $H^3_{PS}$}
We deduce from equations (\ref{E2}) that $d^2_{\tilde{H}}(\mathcal{A}^3)\subset x\mathcal{A}.$\\
But
\begin{equation*}
\begin{array}{ccc}
\mathcal{A}&\cong& \mathbb{C}[y]\oplus z\mathbb{C}[z]\oplus x\mathcal{A}\\
           &\cong& \mathbb{C}[y]\oplus z\mathbb{C}[z]\oplus x\mathbb{C}[x]\oplus xy\mathbb{C}[y]\oplus xz\mathbb{C}[z]\oplus x^2y\mathcal{A}\oplus
x^2z\mathcal{A}\oplus xyz\mathcal{A}.
\end{array}
\end{equation*}
On other hand, for all $xg(x)\in x\mathbb{C}[x]$ the partial differential equation
 $z\dfrac{\partial u}{\partial z}+y\dfrac{\partial v}{\partial y}=g(x)$ have no solution in $\cala\times\cala\times\cala$. Moreover, for all $g\in xy\mathbb{C}[y]\oplus xz\mathbb{C}[z]\oplus x^2y\mathcal{A}\oplus
x^2z\cala\oplus xyz\cala,$ \\
there is  $$g_1(y), g_2(z), g_3(x,y,z), g_4(x,y,z), g_5(x,y,z)\in\mathcal{A}$$
such that
$g=xyg_1(y)+ xzg_2(z)+ x^2yg_3(x,y,z)+x^2z g_4(x,y,z)+xyz g_5(x,y,z)$ \\
Therefore 2-coboundary are given by:
\begin{equation}\label{E4}
 z\dfrac{\partial f_2}{\partial_z}+y\dfrac{\partial f_3}{\partial y}=yg_1(y)+ zg_2(z)+ xyg_3(x,y,z)+xz g_4(x,y,z)+yz g_5(x,y,z)
\end{equation}
Which is equivalent to
\begin{equation}\label{E5}
 z(\dfrac{\partial f_2}{\partial_z}-g_2(z)-x g_4(x,y,z))+y(\dfrac{\partial f_3}{\partial_y}-g_1(y)-xg_3(x,y,z)-z g_5(x,y,z))=0
\end{equation}
So just take:
\begin{equation}\label{E6}
 f_2=\int g_2(z)+x g_4(x,y,z)dz;\quad f_3=\int g_1(y)+xg_3(x,y,z)+z g_5(x,y,z)dy
\end{equation}
This prove that $$d^2_{\tilde{H}}(\mathcal{A}^3)\cong xy\mathbb{C}[y]\oplus xz\mathbb{C}[z]\oplus x^2y\mathcal{A}\oplus
x^2z\mathcal{A}\oplus xyz\mathcal{A}.$$
 Therefore, we deduce that
\begin{equation}\label{E7}
 H^3_{PS}\cong  \mathbb{C}[y]\oplus z\mathbb{C}[z]\oplus x\mathbb{C}[x]
\end{equation}
\subsubsection{Computation of $H^3_{P}$} It follows from equation(
\ref{E3}) that
\begin{equation}\label{E8}
 \delta^2(\mathcal{A}^3)\subset xyz\mathcal{A.}
\end{equation}
But
\begin{equation}\label{E9}
\begin{array}{ccc}
 \mathcal{A}\cong \mathbb{C}[y]\oplus z\mathbb{C}[z]\oplus x\mathbb{C}[x]\oplus xy\mathbb{C}[y]\oplus xy\mathbb{C}[x]\oplus
xz\mathbb{C}[x]\oplus \\ xz\mathbb{C}[z]\oplus yz\mathbb{C}[y]\oplus yz\mathbb{C}[z]\oplus xyz\mathcal{A}
\end{array}
\end{equation}
 and
$$\begin{array}{ccc}
\delta^2(\mathcal{A}^3)\cap\mathbb{C}[y]\oplus z\mathbb{C}[z]\oplus x\mathbb{C}[x]\oplus xy\mathbb{C}[y]\oplus
 xy\mathbb{C}[x]\oplus\\
xz\mathbb{C}[x]\oplus xz\mathbb{C}[z]\oplus yz\mathbb{C}[y]\oplus yz\mathbb{C}[z]\cong 0_\mathcal{A}
\end{array}
$$
Since the map
\begin{equation}\label{E10}
 \mathcal{A}\times\mathcal{A}\rightarrow\mathcal{A}, (u,v)\mapsto \dfrac{\partial u}{\partial z}+\dfrac{\partial v}{\partial y}
\end{equation}
is surjective, $\delta^3(\mathcal{A}^3)\cong xyz\mathcal{A}.$\\
Therefore
\begin{equation*}\label{E11}
\begin{array}{ccc}
 H^3_P\cong \mathbb{C}[y]\oplus z\mathbb{C}[z]\oplus x\mathbb{C}[x]\oplus xy\mathbb{C}[y]\oplus xy\mathbb{C}[x]\oplus\\
xz\mathbb{C}[x]\oplus xz\mathbb{C}[z]\oplus yz\mathbb{C}[y]\oplus yz\mathbb{C}[z]
\end{array}
\end{equation*}
In conclusion, we have prove the following.
\begin{thm}
\begin{enumerate}
 \item The $3^{rd}$ Poisson cohomology of $(\cala=\mathbb{C}[x,y, z], \{x,y\}=0, \{x,z\}=0, \{y,z\}=xyz)$ is
\begin{equation*}
\begin{array}{ccc}
 H^3_P\cong \mathbb{C}[y]\oplus z\mathbb{C}[z]\oplus x\mathbb{C}[x]\oplus xy\mathbb{C}[y]\oplus xy\mathbb{C}[x]\oplus\\
xz\mathbb{C}[x]\oplus xz\mathbb{C}[z]\oplus yz\mathbb{C}[y]\oplus yz\mathbb{C}[z]
\end{array}
\end{equation*}
\item The $3^{rd}$ logarithmic Poisson cohomology of $(\cala=\mathbb{C}[x,y, z], \{x,y\}=0, \{x,z\}=0, \{y,z\}=xyz)$ is
\begin{equation}\label{E7}
 H^3_{PS}\cong  \mathbb{C}[y]\oplus z\mathbb{C}[z]\oplus x\mathbb{C}[x]
\end{equation}
\end{enumerate}
\end{thm}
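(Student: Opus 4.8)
The plan is to read both groups off as the cokernels of the top differentials of the two length-three complexes already exhibited in (\ref{E2}) and (\ref{E3}). Since $\cala=\mathbb{C}[x,y,z]$ has three generators, the relevant Lie--Rinehart modules $\Omega_\cala$ and $\Omega_\cala(\log\cali)$ are both free of rank three over $\cala$, so in each complex $Alt^3\cong\cala$ and $Alt^2\cong\cala^3$, and the complex is $0\to\cala\to\cala^3\to\cala^3\to\cala\to 0$. Consequently $H^3_P=\operatorname{coker}\delta^2$ and $H^3_{PS}=\operatorname{coker}d^2_{\tilde{H}}$, where $\delta^2,d^2_{\tilde{H}}\colon\cala^3\to\cala$ are the explicit maps of (\ref{E3}) and (\ref{E2}). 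Throughout I would work in the monomial basis $\{x^ay^bz^c\}$, since every operator that occurs ($x\partial_x$, $y\partial_y$, $z\partial_z$, and multiplication by $x,y,z$) is either diagonal or a pure shift in these coordinates.

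For $H^3_P$ the formula $\delta^2(f_1,f_2,f_3)=xyz(\partial_zf_2+\partial_yf_3)$ shows at once that $\operatorname{im}\delta^2\subseteq xyz\cala$. For the reverse inclusion I would note that $(u,v)\mapsto\partial_zu+\partial_yv$ is surjective onto $\cala$ (given $h$, take $u=\int h\,dz$ and $v=0$), so in fact $\operatorname{im}\delta^2=xyz\cala$. Hence $H^3_P\cong\cala/xyz\cala$, and because $xyz\cala$ is spanned by the monomials with all three exponents positive, the quotient is spanned by the monomials with at least one zero exponent; organizing these according to which coordinate planes they live on produces the nine-term direct sum of the statement.

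For $H^3_{PS}$ the top differential is $d^2_{\tilde{H}}(f_1,f_2,f_3)=x\bigl(z\partial_zf_2+y\partial_yf_3\bigr)$, so again $\operatorname{im}d^2_{\tilde{H}}\subseteq x\cala$, but the constraint is now strictly tighter. The operators $z\partial_z$ and $y\partial_y$ are the Euler (degree) operators in $z$ and in $y$, acting on $x^ay^bz^c$ by the scalars $c$ and $b$; therefore a monomial lies in the image of $(f_2,f_3)\mapsto z\partial_zf_2+y\partial_yf_3$ exactly when $b+c\geq 1$, the pure powers $x^a$ being annihilated. Pulling the factor $x$ back in, $\operatorname{im}d^2_{\tilde{H}}$ is the span of the monomials with $a\geq 1$ and $b+c\geq 1$; matching this against a monomial decomposition of $\cala$ adapted simultaneously to multiplication by $x$ and to these Euler operators isolates the surviving classes and yields the stated $H^3_{PS}$.

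The decisive difference, and the step I expect to be the main obstacle, is precisely this image computation for the top differential. In the Poisson complex the whole monomial $xyz$ factors out and what remains is the honest surjection $\partial_z\oplus\partial_y$, so the cokernel is simply $\cala/xyz\cala$; in the logarithmic complex only the single factor $x$ comes out, and one is left with the combined Euler operator $z\partial_z+y\partial_y$, which is \emph{not} surjective, its cokernel on each $x$-homogeneous slice being the subspace of elements independent of $y$ and $z$. It is exactly this extra cokernel that forces $H^3_{PS}$ to be strictly smaller than $H^3_P$ and gives the example its point. The genuinely delicate part is the monomial bookkeeping: one must choose the summands of $\cala$ so that both $\operatorname{im}d^2_{\tilde{H}}$ and a complement are unions of summands, and in particular keep careful track of the mixed monomials $y^bz^c$ (and of $x^ay^bz^c$ with several exponents at least two), since these are exactly where a too-coarse decomposition could silently drop or duplicate classes.
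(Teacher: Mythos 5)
Your overall strategy coincides with the paper's own: read off $H^3$ as the cokernel of the top differential in (\ref{E3}) and (\ref{E2}) and do the bookkeeping monomial by monomial; and both of your image computations are correct, namely $\operatorname{im}\delta^2=xyz\cala$ and $\operatorname{im}d^2_{\tilde{H}}=\mathrm{span}\{x^ay^bz^c:\ a\geq 1,\ b+c\geq 1\}$. The genuine gap lies in your two closing assertions: that ``organizing the monomials according to which coordinate planes they live on produces the nine-term direct sum,'' and that matching the logarithmic image against a monomial decomposition ``yields the stated $H^3_{PS}$.'' Carried out honestly, this matching contradicts the statement rather than proving it. For the logarithmic complex, the complement of your image is spanned by the monomials with $a=0$ together with the pure powers of $x$, so the cokernel is $\mathbb{C}[y,z]\oplus x\mathbb{C}[x]$; the mixed classes $[y^bz^c]$ with $b,c\geq 1$ are nonzero (every element of the image is divisible by $x$) and none of them is represented by $\mathbb{C}[y]\oplus z\mathbb{C}[z]\oplus x\mathbb{C}[x]$, as one sees by comparing coefficients of the monomial $y^bz^c$ itself. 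For the Poisson complex, the cokernel of $\delta^2$ is spanned by \emph{all} monomials with at least one vanishing exponent, for instance $\mathbb{C}[x,y]\oplus z\mathbb{C}[y,z]\oplus xz\mathbb{C}[x,z]$; this contains the nonzero classes $[x^2y^2]$, $[y^2z^2]$, $[x^2z^2]$, which lie in none of the nine summands, since each mixed summand ($xy\mathbb{C}[y]$, $xy\mathbb{C}[x]$, $xz\mathbb{C}[x]$, $xz\mathbb{C}[z]$, $yz\mathbb{C}[y]$, $yz\mathbb{C}[z]$) consists of monomials in which one of the two exponents involved equals exactly $1$. Since all spaces in sight are infinite-dimensional over $\mathbb{C}$, the only meaningful reading of the theorem is that the displayed subspaces map isomorphically onto the cohomology; under that reading your argument, pushed to the end, disproves both displayed formulas instead of establishing them.

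To be fair, this defect is inherited from the paper itself: its proof rests on the decomposition $\cala\cong\mathbb{C}[y]\oplus z\mathbb{C}[z]\oplus x\cala$, which is false ($yz$ lies in no summand), and, for $H^3_P$, on a ten-term decomposition of $\cala$ that likewise omits the monomials $y^bz^c$, $x^ay^b$, $x^az^c$ with both exponents at least $2$ (and is not even direct, since $xy\in xy\mathbb{C}[y]\cap xy\mathbb{C}[x]$). You explicitly flagged the mixed monomials $y^bz^c$ as ``exactly where a too-coarse decomposition could silently drop or duplicate classes''; that is precisely what happens, both in your final step and in the paper's. The corrected conclusions your computations actually give are $H^3_{PS}\cong\mathbb{C}[y,z]\oplus x\mathbb{C}[x]$ and $H^3_P\cong\mathbb{C}[x,y]\oplus z\mathbb{C}[y,z]\oplus xz\mathbb{C}[x,z]$. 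The point of the example does survive: since $xyz\cala\subsetneq\operatorname{im}d^2_{\tilde{H}}$, there is a natural surjection $H^3_P\rightarrow H^3_{PS}$ with nonzero kernel (it kills $[xy]$, for instance), so the two cohomologies are genuinely different.
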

\begin{rem}
 We remark the $H^3_{PS}\neq H^3_{P}.$
\end{rem}

\section{Application to prequantization of $\{x,y\}=x.$}
The problem of geometric quantization is based on the Dirac principle; which consists of representation of the underlying Lie algebra of a Poisson algebra by a Hilbert space $\mathcal{H}$. In other words, one shall  build the following commutative diagram:
\begin{equation*}
\xymatrix{0\ar[r]&\mathcal{R}\ar[d]\ar[r]&(\cala,\{-,-\})\ar[d]\ar[r]&(d\cala, [-,-]_{LP})\ar[d]\ar[r]&0\\
          0\ar[r]&\cala\ar[r]&\calh\ar[r]&\calb\ar[r]&0}
\end{equation*} where the first line is an extension of Lie algebras and the second is an extension of Lie-Rinehart algebras.
But according to (\ref{E4}) the following bracket
\begin{equation*}
 [a+\alpha, b+\beta]:=\{a,b\}+ \pi(\alpha,\beta)+[\alpha,\beta]+\tilde{H}(\alpha)b-\tilde{H}(\beta)a
\end{equation*}
is a Lie structure on $\cala\oplus\Omega_\cala(\log x\cala)$ such that
the following is an extension of Lie-Rinehart algebras
\begin{equation*}
 \xymatrix{0\ar[r]&\cala\ar[r]&\cala\oplus\Omega_\cala(\log x\cala)\ar[r]&\Omega_\cala(\log x\cala)\ar[r]&0}
\end{equation*} Where
 $\pi=x\partial_x\wedge\partial_y$ is the Poisson bivector of $\{x,y\}=x;$
By construction, $\pi$ is the associated class of this extension. \\
We consider the map $r:\cala\rightarrow \cala\oplus\Omega_\cala(\log x\cala)$ defined by
$$r(a)= a+x\partial_x(a)\dfrac{dx}{x}+\partial_y(a)dy.$$ By definition, $r$ is Lie algebra homomorphism and the following diagram commutes.
\begin{equation*}
\xymatrix{0\ar[r]&\mathbb{C}\ar[d]\ar[r]&(\cala,\{-,-\})\ar[d]^r\ar[r]&(d\cala, [-,-]_{LP})\ar[d]\ar[r]&0\\
          0\ar[r]&\cala\ar[r]&\cala\oplus\Omega_\cala(\log x\cala)\ar[r]&\Omega_\cala(\log x\cala)\ar[r]&0}
\end{equation*}
We adopted the following definition.
\begin{definition}
A Poisson structure, logarithmic along an ideal $\cali$ of  $\cala$  is saying log prequantizable if there is rang 1  projectif $\cala$-module $M$ with an $\Oml$-connection with curvature $\pi.$
 \end{definition}
According to Theorem 2.15 in \cite{JH} we have
\begin{theorem}( \cite{JH} )
Let $Pic(\cala)$ be the group of projectve rank one $\cala$-modules.
For any Lie-Rinehart algebra  $L$, the correspondence who associated to any class $[M]$ of rang 1 projectif \cala-module
 the class $[\Omega_M]\in H^2(Alt_\cala(L,\cala))$ of the curvature of associated $L$-connection of $M$ is an homomorphism.
\begin{equation*}
i: Pic(\cala)\rightarrow H^2(Alt_\cala(L,\cala))
\end{equation*}
of $\mathcal{R}$-modules.
\end{theorem}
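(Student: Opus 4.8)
The plan is to treat the statement as the rank-one instance of the Chern--Weil type construction for Lie--Rinehart algebras, and to verify in turn: (i) that every class $[M]\in Pic(\cala)$ admits a representative equipped with an $L$-connection; (ii) that the curvature of such a connection is a $2$-cocycle in $Alt_\cala(L,\cala)$; (iii) that the resulting class in $H^2(Alt_\cala(L,\cala))$ depends neither on the chosen connection nor on the chosen representative of $[M]$; and (iv) that the assignment $[M]\mapsto[\Omega_M]$ is additive with respect to tensor product on the source and addition on the target. Throughout I use the canonical identification $End_\cala(M)\cong\cala$, valid for any rank-one projective module, which is precisely what places the curvature in $Alt^2_\cala(L,\cala)$ rather than in $Alt^2_\cala(L,End_\cala(M))$.

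First I would establish existence. Since $M$ is projective, it is a direct summand of a free $\cala$-module $F$, say $F=M\oplus N$ with inclusion $\iota$ and projection $p$ satisfying $p\circ\iota=\mathrm{id}_M$. The free module carries the tautological $L$-connection $\nabla^F_l(\sum a_ie_i)=\sum\rho(l)(a_i)e_i$, and I set $\nabla_l:=p\circ\nabla^F_l\circ\iota$. A direct check shows that the $\cala$-linearity $\nabla_{al}=a\nabla_l$ (from Lie--Rinehart axiom 1) and the Leibniz rule $\nabla_l(am)=a\nabla_l(m)+\rho(l)(a)m$ both survive the projection, using $p\iota=\mathrm{id}_M$. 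This is the only place projectivity is needed. For the cocycle property I would then form $\Omega_M(l_1,l_2)=\nabla_{l_1}\nabla_{l_2}-\nabla_{l_2}\nabla_{l_1}-\nabla_{[l_1,l_2]}$, observe that it is $\cala$-bilinear and alternating, hence lies in $Alt^2_\cala(L,End_\cala(M))\cong Alt^2_\cala(L,\cala)$, and verify $d_\rho\Omega_M=0$ via the Bianchi identity. The rank-one simplification is essential here: the connection induced on $End_\cala(M)\cong\cala$ acts by commutators and is therefore the flat differential $d_\rho$ itself, so Bianchi collapses to exactly $d_\rho\Omega_M=0$.

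For well-definedness I would argue that two $L$-connections $\nabla,\nabla'$ on the same $M$ differ by an $\cala$-linear map $\theta=\nabla'-\nabla\in Alt^1_\cala(L,\cala)$, the Leibniz terms cancelling. Since $M$ has rank one the quadratic correction $\theta\wedge\theta$ vanishes, and the curvatures satisfy $\Omega_{\nabla'}-\Omega_\nabla=d_\rho\theta$, so the two classes in $H^2(Alt_\cala(L,\cala))$ coincide. Transporting a connection along an isomorphism $M\cong M'$ then shows that the class depends only on $[M]$. Finally, for additivity I would equip $M\otimes_\cala M'$ with the tensor-product connection $\nabla\otimes 1+1\otimes\nabla'$, whose curvature is $\Omega_\nabla\otimes 1+1\otimes\Omega_{\nabla'}$; under the compatible identifications $End_\cala(M)\cong\cala$, $End_\cala(M')\cong\cala$ and $End_\cala(M\otimes_\cala M')\cong\cala$ this reads $\Omega_\nabla+\Omega_{\nabla'}$, while the unit $\cala$ carries the flat connection $l\mapsto\rho(l)$ of vanishing curvature and $M^{*}=\mathcal{H}om_\cala(M,\cala)$ carries the dual connection with curvature $-\Omega_M$. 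Hence $i([M]\otimes[M'])=i([M])+i([M'])$ and $i([\cala])=0$.

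The step I expect to be the main obstacle is the bookkeeping in item (iv): one must check that the three canonical isomorphisms $End_\cala(-)\cong\cala$ are genuinely compatible under the tensor product, so that curvature additivity holds on the nose and not merely up to a coboundary. This compatibility is exactly the point at which the rank-one hypothesis is indispensable, and once it is in place the homomorphism property, together with the $\mathcal{R}$-linearity claimed in the statement, reduces to the additivity computation above. This reproduces the argument of Theorem 2.15 in \cite{JH} in the present Lie--Rinehart setting.
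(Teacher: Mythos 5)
Your proposal is correct, but note that the paper itself offers no proof of this statement to compare against: it is imported verbatim as Theorem 2.15 of Huebschmann \cite{JH} and used as a black box in the prequantization argument. What you have written is a sound reconstruction of the standard Chern--Weil-type argument in the Lie--Rinehart setting, and all the essential points are in place: existence of an $L$-connection by compressing the tautological connection on a free module through $p\circ\nabla^F_l\circ\iota$ (this is where projectivity enters); the identification $End_\cala(M)\cong\cala$ for rank-one projectives, which both places the curvature in $Alt^2_\cala(L,\cala)$ and makes the induced connection on $End_\cala(M)$ equal to $\rho$, so that Bianchi reads literally $d_\rho\Omega_M=0$; the vanishing of $\theta\wedge\theta$ by commutativity, which gives independence of the connection up to the coboundary $d_\rho\theta$; and curvature additivity under $\otimes_\cala$, with the trivial module and the dual supplying the unit and inverses. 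Two minor points. First, your remark that the summand construction is ``the only place projectivity is needed'' slightly undersells the hypothesis: rank-one projectivity is used a second time, for the isomorphism $\cala\to End_\cala(M)$ and its compatibility with tensor products, as you yourself exploit in step (iv). Second, the ``homomorphism of $\mathcal{R}$-modules'' phrasing in the statement is loose (as $Pic(\cala)$ is only an abelian group under $\otimes_\cala$); the group-homomorphism property you establish is the actual content, and it is all that the paper's application (deciding whether $[\pi]$ lies in the image of $i$, which is immediate here since $H^2_{PS}\cong 0$) requires.
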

It follow from this theorem that the logarithmic Poisson structure $\{x,y\}=x$ is log prequantizable if and only if the logarithmic Poisson cohomology class of $\pi$ is element of the image of  $i.$ \\
But according to lemma \ref{L2.1}, we have  $[\pi]\in H^2_{PS}\cong 0.$ Then $\{x,y\}=x$ is log prequantizable Poisson structure.
\newpage
\section*{\textbf{Acknowledgments}}
The author is grateful to Tagne Pelap Serge Rom\'eo, Michel Granger, Michel Nguiffo Boyom, Jean-Claude Thomas and Eug\`ene Okassa for useful comments and discussions. This work is an application of some results of my PhD prepared under joint superversion between University of Angers and University of Yaound\'e I. I would like to take this opportunity to thank my advisors, Vladimir Roubtsov and Bitjong Ndombol, for suggesting to me this interesting problem and for their availability during this project. I especially want to thank Larema for the logistics that he put at my disposal during this work. I also thank the French Ministry of Foreign Affairs, Franco-Cameroonian Cooperation, SARIMA and CIMPA for all their support and funding.



\end{document}